\begin{document}
%
%
\theoremstyle{plain}
\swapnumbers
    \newtheorem{thm}{Theorem}[section]
    \newtheorem{prop}[thm]{Proposition}
    \newtheorem{lemma}[thm]{Lemma}
    \newtheorem{cor}[thm]{Corollary}
    \newtheorem{fact}[thm]{Fact}
    \newtheorem{subsec}[thm]{}
    \newtheorem*{thma}{Theorem A}
\theoremstyle{definition}
    \newtheorem{assume}[thm]{Assumption}
    \newtheorem{defn}[thm]{Definition}
    \newtheorem{example}[thm]{Example}
    \newtheorem{examples}[thm]{Examples}
    \newtheorem{claim}[thm]{Claim}
    \newtheorem{notn}[thm]{Notation}
    \newtheorem{construct}[thm]{Construction}
\theoremstyle{remark}
        \newtheorem{remark}[thm]{Remark}
        \newtheorem{remarks}[thm]{Remarks}
    \newtheorem{ack}[thm]{Acknowledgements}
%
%
\newenvironment{myeq}[1][]
{\stepcounter{thm}\begin{equation}\tag{\thethm}{#1}}
{\end{equation}}
\newcommand{\mydiag}[2][]{\myeq[#1]\xymatrix{#2}}
\newcommand{\mydiagram}[2][]
{\stepcounter{thm}\begin{equation}
     \tag{\thethm}{#1}\vcenter{\xymatrix{#2}}\end{equation}}
%
\newenvironment{mysubsection}[2][]
{\begin{subsec}\begin{upshape}\begin{bfseries}{#2.}
\end{bfseries}{#1}}
{\end{upshape}\end{subsec}}
\newenvironment{mysubsect}[2][]
{\begin{subsec}\begin{upshape}\begin{bfseries}{#2\vsn.}
\end{bfseries}{#1}}
{\end{upshape}\end{subsec}}
\newcommand{\sect}{\setcounter{thm}{0}\section}
\newcommand{\wh}{\ -- \ }
\newcommand{\w}[2][ ]{\ \ensuremath{#2}{#1}\ }
\newcommand{\ww}[1]{\ \ensuremath{#1}}
\newcommand{\wb}[2][ ]{\ (\ensuremath{#2}){#1}\ }
\newcommand{\wref}[2][ ]{\ \eqref{#2}{#1}\ }
%
%
\newcommand{\xra}[1]{\xrightarrow{#1}}
\newcommand{\xla}[1]{\xleftarrow{#1}}
\newcommand{\xsim}{\xrightarrow{\sim}}
\newcommand{\hra}{\hookrightarrow}
\newcommand{\epic}{\to\hspace{-5 mm}\to}
\newcommand{\adj}[2]{\substack{{#1}\\ \rightleftharpoons \\ {#2}}}
\newcommand{\ccsub}[1]{\circ_{#1}}
\newcommand{\DEF}{:=}
\newcommand{\EQUIV}{\Leftrightarrow}
\newcommand{\hsp}{\hspace{10 mm}}
\newcommand{\hs}{\hspace{5 mm}}
\newcommand{\hsm}{\hspace{3 mm}}
\newcommand{\vsm}{\vspace{2 mm}}
\newcommand{\vsn}{\vspace{1 mm}}
\newcommand{\vs}{\vspace{4 mm}}
\newcommand{\rest}[1]{\lvert_{#1}}
\newcommand{\lra}[1]{\langle{#1}\rangle}
\newcommand{\llrr}[1]{\langle\!\langle{#1}\rangle\!\rangle}
\newcommand{\q}[1]{^{({#1})}}
\newcommand{\II}[2]{I^{#1}_{#2}}
\newcommand{\Ic}[1]{{\mathcal I}^{#1}}
\newcommand{\li}[1]{_{#1}}
\newcommand{\lb}[1]{_{({#1})}}
\newcommand{\uv}{\lb{u,v}}
%
%
\newcommand{\fd}{f_{\bullet}}
\newcommand{\phid}{\phi_{\bullet}}
\newcommand{\tfd}{\widetilde{\fd}}
\newcommand{\hfd}{\widehat{\fd}}
\newcommand{\gd}{g_{\bullet}}
\newcommand{\hgd}{\widehat{\gd}}
\newcommand{\tgd}{\widetilde{\gd}}
\newcommand{\hd}{h_{\bullet}}
\newcommand{\hhd}{\widehat{h}_{\bullet}}
\newcommand{\kd}{k_{\bullet}}
\newcommand{\hkd}{\widehat{\kd}}
\newcommand{\rd}{r_{\bullet}}
%
%
\newcommand{\A}{{\EuScript A}}
\newcommand{\bA}{\bar{\A}}
\newcommand{\hA}{h^{\A}}
\newcommand{\B}{{\EuScript B}}
\newcommand{\C}{{\mathcal C}}
\newcommand{\Ca}{\C_{\ast}}
\newcommand{\CA}{\C_{\A}}
\newcommand{\CK}{\C_{K}}
\newcommand{\CX}{\C_{X}}
\newcommand{\D}{{\mathcal D}}
\newcommand{\E}{{\mathcal E}}
\newcommand{\F}{{\EuScript F}}
\newcommand{\Fs}{F_{s}}
\newcommand{\K}{{\mathcal K}}
\newcommand{\M}{{\mathcal M}}
\newcommand{\N}{{\mathcal N}}
\newcommand{\OO}{{\EuScript O}}
\newcommand{\PP}{{\mathcal P}}
\newcommand{\PO}{{\mathcal PO}}
\newcommand{\RR}{{\mathcal R}}
\newcommand{\QQ}{{\mathcal Q}}
\newcommand{\Ss}{{\mathcal S}}
\newcommand{\Sa}{\Ss_{\ast}}
\newcommand{\TT}{{\mathcal T}}
\newcommand{\Ta}{\TT_{\ast}}
\newcommand{\U}{{\mathcal U}}
\newcommand{\V}{{\mathcal V}}
%
%
\newcommand{\hy}[2]{{#1}\text{-}{#2}}
\newcommand{\Ab}{{\EuScript Ab}}
\newcommand{\Abgp}{{\Ab\Grp}}
\newcommand{\Cat}{{\EuScript Cat}}
\newcommand{\CC}{\hy{\C}{\Cat}}
\newcommand{\DiG}{{\EuScript D}i{\EuScript G}}
\newcommand{\DG}{\D_{\Gamma}}
\newcommand{\Grp}{{\EuScript Gp}}
\newcommand{\Gpd}{{\EuScript Gpd}}
\newcommand{\JG}{J_{\Gamma}}
\newcommand{\RM}[1]{\hy{{#1}}{\EuScript Mod}}
\newcommand{\RG}{\RR_{\Gamma}}
\newcommand{\Set}{{\EuScript Set}}
\newcommand{\Sb}{S_{\cub}\,}
\newcommand{\UP}[1]{\U_p(#1)}
\newcommand{\VC}{\hy{\V}{\Cat}}
\newcommand{\WG}{W\Gamma}
\newcommand{\WGGp}{W(\Gamma,\Gp)}
\newcommand{\hWG}{\widehat{\WGGp}}
\newcommand{\FG}{\Fs\Gamma}
\newcommand{\WK}{W\K}
\newcommand{\X}{{\mathcal X}}
%
%
\newcommand{\CG}{(\C,\Gamma)}
\newcommand{\CGC}{\hy{\CG}{\Cat}}
\newcommand{\CaG}{(\Ca,\Gamma)}
\newcommand{\CaGC}{\hy{\CaG}{\Cat}}
\newcommand{\CoK}{(\C,\K)}
\newcommand{\CO}{(\C,\OO)}
\newcommand{\COC}{\hy{\CO}{\Cat}}
\newcommand{\OC}{\hy{\OO}{\Cat}}
\newcommand{\SG}{(\Ss,\Gamma)}
\newcommand{\SGC}{\hy{\SG}{\Cat}}
\newcommand{\SaG}{(\Sa,\Gamma)}
\newcommand{\SaGC}{\hy{\SaG}{\Cat}}
\newcommand{\SO}{(\Ss,\OO)}
\newcommand{\SOC}{\hy{\SO}{\Cat}}
\newcommand{\SeG}{(\Set,\Gamma)}
\newcommand{\SeGC}{\hy{\SeG}{\Cat}}
\newcommand{\VG}{(\V,\Gamma)}
\newcommand{\VGC}{\hy{\VG}{\Cat}}
\newcommand{\VO}{(\V,\OO)}
\newcommand{\VOC}{\hy{\VO}{\Cat}}
%
%
\newcommand{\Ed}{\E_{\bullet}}
\newcommand{\Md}{\M_{\bullet}}
\newcommand{\Nd}{\N_{\bullet}}
\newcommand{\Xd}{X_{\bullet}}
\newcommand{\Yd}{Y_{\bullet}}
%
%
\newcommand{\Q}{{\mathbb Q}}
\newcommand{\R}{{\mathbb R}}
\newcommand{\Z}{{\mathbb Z}}
%
%
\newcommand{\Gp}{\Gamma'}
\newcommand{\tG}{\tilde{\Gamma}}
\newcommand{\hX}{\widehat{X}}
\newcommand{\tX}{\tilde{X}}
\newcommand{\bz}{\mathbf{0}}
\newcommand{\bo}{\mathbf{1}}
\newcommand{\btw}{\mathbf{2}}
\newcommand{\bth}{\mathbf{3}}
\newcommand{\bfo}{\mathbf{4}}
\newcommand{\bk}{\mathbf{k}}
\newcommand{\bn}{\mathbf{n}}
\newcommand{\bnm}{\mathbf{n-1}}
\newcommand{\bnp}{\mathbf{n+1}}
%
%
\newcommand{\pis}{\pi_{\ast}}
\newcommand{\hpi}{\hat{\pi}_{1}}
%
%
\newcommand{\csk}[1]{\operatorname{csk}_{#1}}
\newcommand{\colim}{\operatorname{colim}}
\newcommand{\Colim}{\operatorname{Col}}
\newcommand{\Coli}[2]{\Colim\q{#1}({#2})}
\newcommand{\comp}[1]{\operatorname{comp}({#1})}
\newcommand{\cmp}[1]{\operatorname{cmp}_{({#1})}}
\newcommand{\cub}{\operatorname{cub}}
\newcommand{\seg}[1]{\operatorname{Seg}[{#1}]}
\newcommand{\sk}[1]{\operatorname{sk}_{#1}}
\newcommand{\skc}[1]{\operatorname{sk}^{c}_{#1}}
\newcommand{\cskc}[1]{\operatorname{cosk}^{c}_{#1}}
\newcommand{\skG}[1]{\skc{#1}(\Gamma,\Gp)}
\newcommand{\sktG}[1]{\skc{#1}(\tG,\Gp)}
\newcommand{\Dom}{\operatorname{Dom}}
\newcommand{\End}{\operatorname{End}}
\newcommand{\Ext}{\operatorname{Ext}}
\newcommand{\fib}{\operatorname{fib}}
\newcommand{\fin}{\operatorname{fin}}
\newcommand{\hc}[1]{\operatorname{hc}_{#1}}
\newcommand{\ho}{\operatorname{ho}}
\newcommand{\holim}{\operatorname{holim}}
\newcommand{\Hom}{\operatorname{Hom}}
\newcommand{\uHom}{\underline{\text{Hom}}}
\newcommand{\Id}{\operatorname{Id}}
\newcommand{\Image}{\operatorname{Im}}
\newcommand{\init}{\operatorname{init}}
\newcommand{\vf}{v_{\fin}}
\newcommand{\vfi}[1]{v{#1}_{\fin}}
\newcommand{\vi}{v_{\init}}
\newcommand{\vin}[1]{v{#1}_{\init}}
\newcommand{\Mor}{\operatorname{Mor}}
\newcommand{\Obj}{\operatorname{Obj}\,}
\newcommand{\op}{\sp{\operatorname{op}}}
\newcommand{\vdim}{\operatorname{vdim}}
%
%
%
\newcommand{\pgx}[1][n]{\mathcal P_{#1}(\Gamma,X')}
\newcommand{\qgx}[1][n]{\mathcal Q_{#1}(\Gamma,X')}
%
%
\newcommand{\map}{\operatorname{map}}
\newcommand{\mapp}{\map\,}
\newcommand{\mapa}{\map_{\ast}}
\newcommand{\mapc}{\C^{c}}
\newcommand{\mape}[2]{\map\sp{#1}\sb{#2}}
\newcommand{\mapcv}[1]{\mape{c}{#1}}
%
%
\newcommand{\bS}[1]{\mathbf{S}^{#1}}
\newcommand{\bSp}[2]{\bS{#1}_{({#2})}}
\newcommand{\bW}{\mathbf{W}}
%
%
\title{Higher homotopy operations and Cohomology}
%
%
\author[D.~Blanc]{David Blanc}
\address{Department of Mathematics\\ University of Haifa\\ 31905 Haifa\\ Israel}
\email{blanc@math.haifa.ac.il}
\author [M.W.~Johnson]{Mark W.~Johnson}
\address{Department of Mathematics\\  Penn State Altoona\\  Altoona,
PA 16601-3760\\ USA}
\email{mwj3@psu.edu}
\author[J.M.~Turner]{James M.~Turner}
\address{Department of Mathematics\\ Calvin College\\ Grand Rapids, MI\\ USA}
\email{jturner@calvin.edu}
\date{July 29, 2008; revised April 20, 2009}
\subjclass{Primary: 55Q35; \ secondary: 55N99, 55S20, 18G55}
\keywords{Higher homotopy operations, \ww{SO}-cohomology,
homotopy-commutative diagram, rectification, obstruction}

\begin{abstract}
We explain how higher homotopy operations, defined topologically, may
be identified under mild assumptions with (the last of) the Dwyer-Kan-Smith
cohomological obstructions to rectifying homotopy-commutative diagrams.
\end{abstract}
\maketitle

\setcounter{section}{0}

%
%
\section*{Introduction}
\label{cint}

The first secondary homotopy operations to be defined were Toda
brackets, which appeared (in \cite{TodG}) in the early 1950's \wh at
about the same time as the secondary cohomology operations of Adem and
Massey (in \cite{AdemI} and \cite{MUehJ}). The definition was later extended to
higher order homotopy and cohomology operations (see
\cite{SpanH,MaunCC,KlausC}), which have been used extensively in
algebraic topology, starting with Toda's own calculations of the
homotopy groups of spheres in \cite{TodC}.

In \cite{BMarkH}, a ``topological'' definition of higher homotopy
operations based on the $W$-construction of Boardman and Vogt,
was given in the form of an obstruction theory for rectifying
diagrams. The same definition may be used also for higher cohomology
operations. This was recently modified in \cite{BChachP} to take
account of the fact that, in practice, higher order operations, both
in homotopy and in cohomology, occur in a \emph{pointed} context,
which somewhat simplifies their definition and treatment.

Earlier, in \cite{DKSmH}, Dwyer, Kan, and Smith gave an obstruction
theory for rectifying a diagram \w{\tX:\K\to\ho\TT} in the
homotopy category of topological spaces by making it
``infinitely-homotopy commutative'': the precise statement involves
the simplicial function complexes \w{\map(\tX u,\tX v)} for all
\w[,]{u,v\in\OO=\Obj(\K)} which constitute an \ww{\SO}-category
\w{\CX} (see \S \ref{dcx} and Section \ref{ccoh}). Their results are
thus stated in terms of \ww{\SO}-categories (simplicially enriched
categories with object set $\OO$). In particular, the obstructions
take values in the corresponding \ww{\SO}-cohomology groups (see
\cite[\S 2.1]{DKSmO}).

The purpose of the present note is to explain the relation between
these two approaches.  Because the $W$-construction, and thus higher
operations, are defined in terms of cubical sets, it is convenient to
work cubically throughout. In this language, \ww{\SO}-cohomology is
replaced by the (equivalent) \ww{\CG}-cohomology (see \S \ref{dcoh}),
and  our main result (Theorem \ref{tone} below) may be stated roughly
as follows:

Assume given a directed graph $\Gamma$ without loops (cf.\ \S \ref{dlatt})
of length \w[,]{n+2} having initial node \w{\vi} and terminal node
\w[,]{\vf}  and let $\M$ be a cubically enriched pointed model category.

\begin{thma}
For each pointed diagram \w[,]{\tX:\Gamma\to\ho\M} there is a natural
pointed correspondence $\Phi$ between the possible values of the
final Dwyer-Kan-Smith obstruction to rectifying $\tX$, in the
\ww{\CG}-cohomology group \w[,]{H^{n}(\Gamma,\pi_{n-1}\CX)} and the $n$-th
order homotopy operation \w[,]{\llrr{\tX}} a subset of
\w[.]{[\Sigma^{n-1}\tX(\vi),\,~\tX(\vf)]}
\end{thma}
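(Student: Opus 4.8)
The plan is to compare two obstruction-theoretic constructions — the Dwyer-Kan-Smith $\CG$-cohomological obstruction and the topologically-defined $n$-th order homotopy operation $\llrr{\tX}$ — by realizing both as arising from the same underlying data: a choice of partial rectification of $\tX$ over the $(n+1)$-skeleton (in the cubical sense) of the diagram category $\WG$, together with the indeterminacy in extending one more stage. First I would recall, from the earlier sections, the precise description of the $\CG$-cochain complex computing $H^{\ast}(\Gamma,\pi_{\ast}\CX)$, and identify the $n$-cochains whose coboundary vanishes with exactly the obstruction cocycles to extending a coherent lift $\Gamma\to\WK$ (for $\K$ a cubical resolution of $\M$) from the $(n+1)$-skeleton to the $(n+2)$-skeleton. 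The key observation is that the combinatorics of the directed graph $\Gamma$ of length $n+2$ — with its unique initial and terminal nodes — forces the top-dimensional cube in $\WG$ to be precisely the one whose boundary carries the $n$-th order operation; so a normalized cochain supported there is literally a map $\Sigma^{n-1}\tX(\vi)\to\tX(\vf)$ in the homotopy category, up to the identifications coming from the cubical $W$-construction.

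The construction of $\Phi$ itself proceeds as follows. Given a value of the DKS obstruction, lift it to an actual obstruction cocycle $z$ for some choice of partial coherent lift; the value of $z$ on the top cell of $\Gamma$ is a homotopy class in $[\Sigma^{n-1}\tX(\vi),\tX(\vf)]$, and I would define $\Phi$ to send the cohomology class of $z$ to this homotopy class, viewed as an element of the subset $\llrr{\tX}$. The two things that must be checked are: (i) well-definedness — changing the partial lift changes $z$ by a coboundary, and one must verify this alters the top-cell value exactly by the indeterminacy subgroup defining $\llrr{\tX}$ as a coset/subset; and (ii) that every element of $\llrr{\tX}$ and every obstruction value is hit, i.e.\ the correspondence is onto in both directions — this uses that any partial rectification giving rise to the topological operation can be promoted to a coherent lift in the cubical $\WK$-sense, invoking the comparison between the $W$-construction and the bar-type resolution implicit in the $\SO$-category $\CX$. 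Pointedness is used throughout: working in the pointed model category $\M$ and with pointed diagrams ensures the relevant function complexes and their homotopy groups $\pi_{n-1}\CX$ are the correct coefficient system, and that the basepoint (the null value / the zero cochain) corresponds on the topological side to the distinguished element $0\in\llrr{\tX}$, making $\Phi$ a pointed correspondence.

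The naturality statement I would handle last: a map of diagrams $\tX\to\tX'$ (or a change of the ambient category along a cubically-enriched functor) induces compatible maps on $\CG$-cochains and on the function complexes computing the operation, and one checks $\Phi$ commutes with these — this is essentially formal once the construction is set up cochain-by-cochain, since everything is defined by evaluation on the top cell and that evaluation is natural.

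The main obstacle I expect is step (i) above — matching the indeterminacy. On the cohomology side the indeterminacy is ``exact'' (it is literally the image of the coboundary $d\colon C^{n-1}\to C^{n}$ restricted to the top cell), whereas the classical $n$-th order operation $\llrr{\tX}$ has an indeterminacy described geometrically as a union of subgroups coming from the various ways of filling the boundary subspheres. Showing these two descriptions of the indeterminacy coincide requires a careful bookkeeping of which faces of the top cube of $\WG$ contribute, and an argument — probably by induction on $n$, or by a direct chase in the cubical $W$-construction — that the cellular coboundary reproduces exactly the geometric boundary-filling indeterminacy. I would isolate this as the technical heart of the proof and treat it with an explicit combinatorial lemma about $\WG$ before assembling the rest.
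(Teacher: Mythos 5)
The central gap is that you leave unproved exactly the step you yourself identify as the technical heart, and that step is also where your route diverges from the machinery actually available. In the cubical setting of the paper, $\CG$-cohomology is \emph{not} presented by a cochain complex: $H^{n}_{\Lambda}(\A,G)$ is defined as homotopy classes of maps into an extended Eilenberg--Mac\,Lane object $E^{\Lambda}(G,n)$ over $B\Lambda$, and the final Dwyer--Kan--Smith obstruction is the composite of the partial lift with the $k$-invariant $k_{n-2}$ of the Postnikov tower of $\CX$. There is no ``evaluation of a cocycle on the top cell'' to hand, and no description of coboundaries, so both your definition of $\Phi$ and your well-definedness step (i) presuppose machinery you have not constructed. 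The paper instead compares the two obstructions homotopy-theoretically: it forms the subcategory $\hWG\subset\WGGp$ obtained by adjoining the \emph{unreduced} null $n$-cubes to the relative $(n-1)$-skeleton (to which any $(n-1)$-allowable extension prolongs canonically), the $\CG$-category $\RG$ of \emph{reduced} null $(n-1)$-spheres, and the cofibration sequence $\RG\to\hWG\to\WGGp$; the correspondence $\Phi_{n}$ is then defined via this sequence, Freudenthal suspension, and the $k$-invariant square, and the proof is a diagram chase showing the suspended extension $e$ vanishes if and only if the restriction $\sigma$ to $\RG$ does. Note too that what is actually proved is weaker than what you aim for: $\Phi$ is shown to be a \emph{pointed} correspondence ($\Phi(\alpha)=0$ iff $\alpha=0$), not a bijection matching the coboundary indeterminacy with the geometric filling indeterminacy; the part you flag as the main obstacle is never carried out in your proposal, so the argument is incomplete precisely there.

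A second, more concrete omission: there is no single ``top-dimensional cube''. The maximal cubes of $\WG(\vi,\vf)$ are indexed by all length-$(n+1)$ factorizations of $\phi_{\max}=0$, and the operation naturally takes values in $\bigl[\bigvee_{\fd\in\JG}\Sigma^{n-1}X'(\vi),\,X'(\vf)\bigr]$, the wedge over the set $\JG$ of \emph{reduced} null sequences. Making your construction precise requires the bookkeeping the paper does explicitly: cubes indexed by sequences with a non-zero adjacent composite contribute nothing (the extension problem there reduces to a lower-dimensional one, as in the lemma on linear lattices), unreduced null cubes extend canonically (which is why $\hWG$ exists and carries the canonical $\hX$), and only the reduced null cubes support the operation. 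Without this reduction your ``normalized cochain supported on the top cell'' is not literally a class in $[\Sigma^{n-1}\tX(\vi),\tX(\vf)]$, and the indeterminacy comparison you postpone is exactly where these distinctions bite.
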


\begin{remark}\label{rbb}
The fact that $\Phi$ is \emph{pointed} implies that, not surprisingly,
the two different obstructions to rectification vanish simultaneously.
Our objective here is to explicitly identify each value of a higher
homotopy operation (with its usual indeterminacy) with a \ww{\CG}-cohomology
class for $\Gamma$.

In \cite{BBlaC}, a relationship between \ww{\SO}-cohomology and the
cohomology of a $\Pi$-algebra is described. Since the latter is a
purely algebraic concept, we hope that together with the present result
this will provide a systematic way to apply homological-algebraic methods
to interpret and calculate higher homotopy and cohomology operations.
\end{remark}

\begin{mysubsection}{Notation}\label{snot}
The category of compactly generated topological spaces is denoted by $\TT$,
and that of pointed connected compactly generated spaces by \w[;]{\Ta} their
homotopy categories are denoted by \w{\ho\TT} and \w[,]{\ho\Ta}
respectively. The categories of (pointed) simplical sets will be denoted by
$\Ss$ (resp., \w[),]{\Sa} those of groups, abelian groups, and
groupoids by \w[,]{\Grp} \w[,]{\Abgp} and \w[,]{\Gpd} respectively.
\w{\Cat} denotes the category of small categories.

If \w{\lra{\V,\otimes}} is a monoidal category, we denote by \w{\VC}
the collection of all (not necessarily small) categories enriched over $\V$
(see \cite[\S 6.2]{BorcH2}). A category $\K$ is called \emph{pointed}
if it has a \emph{zero object} $0$ \wh that is, $0$ is both initial
and final. In such a $\K$, a map factoring through $0$  is
called a \emph{null} (or \emph{zero}) map, and since there is a unique
such map between any two objects, $\K$ is enriched over pointed sets.
\end{mysubsection}

\begin{remark}\label{rscat}
It will be convenient at times to work with \emph{non-unital}
categories \wh that is, categories which need not have identity maps.
These have been studied in the literature under various names,
beginning with the \emph{semi-categories} of V.V.~Vagner (see
\cite{VagnT}). The enriched version appears, e.g., in \cite{BBMoenR}.
\end{remark}

\begin{mysubsection}{Organization}\label{sorg}
Section \ref{ccs} provides a review of cubical sets and their homotopy
theory.  Section \ref{ccec} discusses cubically enriched categories,
as a replacement for the \ww{\SO}-categories of Dwyer and Kan, and
describes their model category structure (Theorem \ref{tcmodcat}).
In Section \ref{clhh} we give a ``topological'' definition of \emph{pointed}
higher homotopy operations in terms of diagrams indexed by certain
finite categories called lattices. Finally, in Section \ref{ccoh} the
Dwyer-Kan-Smith obstruction theory is described and the main result
(Theorem \ref{tone} and Corollary \ref{ccorresp}) is proved.
\end{mysubsection}

\begin{ack}
This research was supported by BSF grant 2006039; the third author was
also supported by NSF grant DMS-0206647 and a Calvin Research
Fellowship (SDG).
\end{ack}

%
%
\sect{Cubical sets}
\label{ccs}

Even though the obstruction theory of Dwyer, Kan, and Smith was
originally defined simplicially, for our purposes it appears more
economical to work cubically. This is because cubical sets are the
natural setting for the $W$-construction of Boardman and Vogt, which
was used for constructing higher homotopy operations in \cite{BMarkH}
and \cite{BChachP}. Since our goal is to identify these operations
with the cohomological obstructions of Dwyer-Kan-Smith, we simplify
the exposition by framing their theory in cubical terms as
well. Because cubical homotopy theory is less familiar than the
simplicial version, and the relevant information and definitions are
scattered throughout the literature, we summarize them here.

\begin{defn}\label{dcube}
Let $\Box$ denote the \emph{Box category},  whose objects are the
abstract cubes \w{\{\Ic{n}\}_{n=0}^{\infty}} (where \w{\Ic{}:=\{0,1\}}
and \w{\Ic{0}} is a single point). The morphisms of $\Box$ are
generated by the inclusions \w{d^{i}_{\varepsilon}:\Ic{n-1}\to\Ic{n}}
and projections \w{s^{i}:\Ic{n}\to\Ic{n-1}} for \w{1\leq i\leq n}
and \w[.]{\varepsilon\in\{0,1\}}

One can identify $\Box$ with a category of topological cubes, where
\w{\Ic{n}} corresponds to \w{[0,1]^{n}} (an $n$-fold product of unit
intervals), the linear map \w{d^{i}_{\varepsilon}:[0,1]^{n-1}\to[0,1]^{n}}
is defined
\w[,]{(t_{1},\dotsc,t_{n-1})\mapsto(t_{1},\dotsc,t_{i-1},\varepsilon,t_{i},
\dotsc,t_{n-1})} and \w{s^{i}:[0,1]^{n}\to [0,1]^{n-1}} is defined by
omitting the $i$-th coordinate.

A contravariant functor \w{K:\Box\op\to\Set} is called a \emph{cubical set}
(or cubical complex), and we write \w{K_{n}} for the set \w{K(\Ic{n})}
of \emph{$n$-cubes} (or $n$-cells) of $K$.
The \ww{(i,\varepsilon)}-\emph{face map}
\w{d_{i}^{\varepsilon}:K_{n}\to K_{n-1}} and the \emph{$i$-th degeneracy}
\w{s_{i}:K_{n-1}\to K_{n}} are induced by \w{d^{i}_{\varepsilon}} and
\w[,]{s^{i}} respectively. A cubical set $K$ is called \emph{finite} if
all but finitely many $n$-cubes of $K$ are degenerate (that is, in the
image of some \w[).]{s_{i}}
The category of cubical sets is denoted by $\C$. See \cite[I,\S 5]{KPortA},
\cite[\S 1]{BHigA}, or \cite{FRSandT}.

Several obvious constructions carry over from simplicial sets:
for example, the $n$-truncation functor \w{\tau_{n}} on cubical sets
has a left adjoint, and composing the two yields the cubical
$n$-\emph{skeleton} functor \w[.]{\skc{n}:\C\to\C} Thus \w{\skc{n}K}
is generated (under the degeneracies) by the $k$-cubes of $K$ for
\w[.]{k\leq n}
\end{defn}

\begin{notn}\label{ncube}
There is a standard embedding of $\Box$ in $\C$, in which
\w{\Ic{n}\in\Box} is taken to the standard $n$-cube \w{I^{n}\in\C}
(with one non-degenerate cell in dimension $n$, and all its faces).
Applying \w{\skc{n}} to the standard \ww{(n+1)}-cube \w[,]{I^{n+1}} we
obtain its \emph{boundary} \w[.]{\partial I^{n+1}:=\skc{n}I^{n+1}} By
omitting the \ww{d_{i}^{\varepsilon}}-face from \w[,]{\partial I^{n+1}}
we obtain the \ww{(i,\varepsilon)}-\emph{square horn}
\w[.]{\sqcap_{i}^{n,\varepsilon}}
\end{notn}

\begin{remark}\label{rcube}
There is also a version of cubical sets without degeneracies,
sometimes called \emph{semi-cubical sets}, but these are
not suitable for homotopy theoretic purposes (cf.\ \cite{AntoC}).
On the other hand, Brown and Higgins have proposed adding further
``adjacent degeneracies'', called \emph{connections} (see
\cite[\S 1]{BHigA} and \cite{GMaurC}).  These have proved useful in
various contexts (see, e.g., \cite{AntoG,BHigC}).
\end{remark}

\begin{mysubsection}{The cubical enrichment of $\C$}\label{smsc}
As a functor category, all limits and colimits in $\C$ are defined
levelwise. In particular, the $k$-cubes of a given cubical set \w{K\in\C}
\wb{k\geq 0} form a category \w{\CK} (under inclusions), and
\w[.]{K\cong\colim_{I^{k}\in\CK}\ I^{k}}

However, it turns out the products in $\C$ do not behave well with
respect to realization (see Remark \ref{rcsm} below), so another
monoidal operation is needed:
\end{mysubsection}

\begin{defn}\label{dctens}
If $K$ and $L$ are two cubical sets, their \emph{cubical tensor}
\w{K\otimes L\in\C} is defined
$$
K\otimes L~:=~\colim_{I^{j}\in\CK,~I^{k}\in\C_{L}}~I^{j+k}~.
$$
This defines a symmetric monoidal structure
\w{\otimes:\C\times\C\to\C} on cubical sets (see \cite[\S 3]{JardCH}).

More generally, let \w{\lra{\V,\otimes}} be a monoidal category with
(finite) colimits \wh for example, \w[,]{\lra{\TT,\times}}
\w[,]{\lra{\Ss,\times}} or \w{\lra{\C,\otimes}} \wh and assume we have
``standard cubes'' in $\V$, defined by a (faithful) monoidal functor
\w{T:\lra{\Box,\times}\to\lra{\V,\otimes}} \wh that is, a compatible choice
of ``standard cubes'' \w{T\Ic{n}} in $\V$. Given a (finite) cubical set
$K$, for any \w{X\in\V} define
$$
X\otimes K~:=~\colim_{\CK}~T_{X}~,
$$
where the diagram \w{T_{X}:\CK\to\V} is defined by
\w[.]{T_{X}I^{n}:=X\otimes T\Ic{n}}
\end{defn}

\begin{defn}\label{dcmap}
For \w{\lra{\V,\otimes}} as above, the \emph{cubical mapping complex}
\w{\mapcv{\V}(X,Y)\in\C} is defined for any \w{X,Y\in\V} by setting
$$
\mapcv{\V}(X,Y)_{n}~:=~\Hom_{\C}(X\otimes T\Ic{n},Y)~,
$$
with the cubical structure inherited from \w{\Ic{n}\in\Box}
(cf.\ \cite{KampsK}). We shall generally abbreviate
\w{\mapcv{\V}(X,Y)} to \w[.]{\V^{c}(X,Y)}
\end{defn}

In particular, when $\V$ is $\C$ itself, this makes
\w{\lra{\C,\otimes,I^{0},\mapc}} into a symmetric monoidal
closed category (see \cite[\S 6.1]{BorcH2}).

\begin{mysubsection}{Comparison to $\Ss$}\label{scss}
Cubical sets are related to simplicial sets by a pair of adjoint functors
%
\begin{myeq}\label{eqone}
\C\adj{T}{\Sb}\Ss~.
\end{myeq}
\noindent The \emph{triangulation} functor $T$ is defined
\w{TK:=\colim_{I^{n}\in\CK}~\Delta[1]^{n}} (compare Definition
\ref{dctens}), where \w{\Delta[1]^{n}=\Delta[1]\times\dotsc\times\Delta[1]}
is the standard simplicial $n$-cube.  The \emph{cubical singular} functor
\w{\Sb:\Ss\to\C=\Set^{\Box\op}} is defined adjointly by
\w[.]{(\Sb X)(I^{n}):=\Hom_{\Ss}(TI^{n},X)} This is a singular-realization pair
in the sense of \cite{DKanSR}; composing \wref{eqone} with the usual
adjoint pair:
%
\begin{myeq}\label{eqtwo}
\Ss\adj{|-|}{S}\TT
\end{myeq}
\noindent yields a similar adjunction to topological spaces.
\end{mysubsection}

\begin{remark}\label{rcsm}
Note that \w{T:\lra{\C,\otimes}\to\lra{\Ss,\times}} is strongly
monoidal (cf.\ \cite[\S 6.1]{BorcH2}), in that there is a natural isomorphism
%
\begin{myeq}\label{eqthree}
T(K\otimes L)~\cong~(TK)\times(TL)~.
\end{myeq}
\noindent On the other hand, \w{\Sb:\lra{\Ss,\times}\to\lra{\C,\otimes}}
is not strongly monoidal, as we now show:  as a right adjoint,
\w{\Sb} commutes with  (levelwise) products up to natural isomorphism, so
$$
\Sb(X\times Y)\cong\Sb(X)\times\Sb(Y)~.
$$
Thus, if \w{\Sb} were strongly monoidal, one would have
a levelwise isomorphism
$$
\Sb(X) \otimes \Sb(Y)\cong\Sb(X)\times\Sb(Y)~.
$$

Note this is unlikely, since an $n$-cube of \w{K\otimes L}
corresponds to a pair consisting of a $j$-cube of $K$ (for some
\w[)]{0 \leq j \leq n} and an \ww{(n-j)}-cube of $L$, while an
$n$-cube of \w{K\times L} corresponds to a pair consisting of an $n$-cube
of $K$ and an $n$-cube of $L$. In fact, \w{K\times L} is in general not even
homotopy equivalent to \w{K\otimes L} for \w[,]{K,L\in\C}
\wh for example, \w{T(I^{1}\times I^{1})\simeq S^{1}} in $\Ss$
while \w{T(I^{1}\otimes I^{1})\cong\Delta[1]\times\Delta[1]}
(see \cite[\S 1, Remark 8]{JardC}).

Nevertheless, since \w{I^{0}} is both terminal in $\C$ and the unit
for $\otimes$, the projections
\w{\pi_{K}:K\otimes L\to K\otimes I^{0}\cong K} and
\w{\pi_{L}:K\otimes L\to I^{0}\otimes L\cong L} induce a natural map
%
\begin{myeq}\label{eqfour}
\vartheta:K\otimes L~\to~K\times L~,
\end{myeq}
\noindent which is symmetric monoidal in the sense that it commutes
with the obvious associativity and switch-map isomorphisms.
\end{remark}
%
%
\begin{fact}[\protect{\cite[\S 3]{JardCH}}]\label{fmonic}
For any \w[,]{L\in\C} the functor \w{-\otimes L} preserves
monomorphisms in $\C$.
\end{fact}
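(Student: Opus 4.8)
The plan is to reduce the claim to the case of the generating cube $I^n$ and then bootstrap to arbitrary $K$ by a colimit argument. First I would recall from Definition~\ref{dctens} that the functor $-\otimes L$ is defined by $K\otimes L=\colim_{I^{j}\in\CK}\ I^{j}\otimes L$, so it is a colimit-preserving functor $\C\to\C$; in particular, to check that it preserves monomorphisms it suffices (by a standard argument, since every mono in a presheaf category is a levelwise injection, and filtered colimits of injections are injections) to understand how it acts on the inclusions of cells. More precisely, I would first verify that for each fixed $n$, the standard cube functor $I^n\otimes-:\C\to\C$ preserves monomorphisms: by symmetry of $\otimes$ this is the same as checking $-\otimes I^n$, and since $I^n\otimes L\cong L^{\otimes?}$\ldots no — rather, $I^n\otimes L$ is, levelwise, a disjoint union indexed by the cells of $I^n$ of (shifted copies of) cells of $L$, because an $m$-cube of $I^j\otimes L$ is a pair consisting of an $a$-cube of $I^j$ and a $b$-cube of $L$ with $a+b=m$ (as noted in Remark~\ref{rcsm}). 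Thus $(I^j\otimes L)_m=\coprod_{a+b=m}(I^j)_a\times L_b$ as sets, functorially and naturally in $L$, and a monomorphism $L\hookrightarrow L'$ induces a levelwise injection on each factor, hence on the coproduct.

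Second, having handled the representables $I^j$, I would pass to a general $L$. Write $K=\colim_{I^{j}\in\CK}I^{j}$; then $K\otimes L=\colim_{I^{j}\in\CK}(I^{j}\otimes L)$, and dually for $K'$. Given a monomorphism $f:K\hookrightarrow K'$, I want $f\otimes\id_L$ to be a monomorphism. Here one must be slightly careful: a colimit of monomorphisms need not be a monomorphism in general. The cleanest route is to argue levelwise, directly on $n$-cubes: an $n$-cube of $K\otimes L$ is represented by a pair $(x,y)$ with $x$ a $j$-cube of $K$ and $y$ an $(n-j)$-cube of $L$, modulo the identifications coming from the face/degeneracy relations in $\Box$; the map $f\otimes\id_L$ sends $[(x,y)]$ to $[(f(x),y)]$. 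Since $f$ is injective in every degree and compatible with all structure maps, two pairs become identified after applying $f$ only if they were already identified — i.e.\ the induced map on the colimit presentation of $n$-cubes is injective. Making this precise amounts to observing that the equivalence relation defining $(K\otimes L)_n$ is "transported along $f$" injectively because $f$ is a monomorphism of the indexing diagrams $\CK\hookrightarrow\C_{K'}$ compatibly with the cube-data.

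Alternatively — and this is probably the slicker write-up — I would use the closed structure: by the last remark before Section~\ref{scss}, $\lra{\C,\otimes,I^0,\mapc}$ is symmetric monoidal closed, so $-\otimes L$ is a left adjoint (to $\mapc(L,-)$) and hence preserves colimits, and in a presheaf topos a left adjoint between presheaf categories preserves monomorphisms iff it does so on representables — actually the safe statement is that $-\otimes L$ preserves all monos provided it preserves pullbacks of the relevant shape, which for the purpose of detecting injectivity reduces, via $L=\colim_{\C_L}I^k$ and the fact that $-\otimes I^k$ visibly preserves monos (same coproduct-of-cells computation as above, now in the other variable), to the observation that $-\otimes L=\colim_{I^k\in\C_L}(-\otimes I^k)$ is a filtered-type colimit of mono-preserving functors along injective transition maps, hence mono-preserving. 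The main obstacle is exactly the point flagged above: colimits of monomorphisms are not automatically monomorphisms, so one genuinely has to exploit the explicit combinatorial description of $(K\otimes L)_n$ as a set of cell-pairs (Remark~\ref{rcsm}) rather than invoking a soft categorical generality — once that description is in hand, injectivity of $f\otimes\id_L$ in each degree is immediate from injectivity of $f$, and naturality/compatibility with faces and degeneracies is routine.
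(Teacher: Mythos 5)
The paper offers no proof of this Fact: it is quoted from Jardine \cite[\S 3]{JardCH}, so the only comparison available is with the standard argument there, which is indeed the levelwise one you are aiming at. But as written your proposal has a genuine gap, and it sits exactly at the crucial point. The claim in your first paragraph that $(I^{j}\otimes L)_{m}=\coprod_{a+b=m}(I^{j})_{a}\times L_{b}$ ``as sets'' is false: the cubical tensor is the quotient of $\coprod_{p+q=n}K_{p}\times L_{q}$ by the identifications $(s_{p+1}x,\,y)\sim(x,\,s_{1}y)$, which move a degeneracy in the interface direction from one factor to the other. Already for $K=L=I^{0}$ the disjoint union has two $1$-cells while $(I^{0}\otimes I^{0})_{1}\cong(I^{0})_{1}$ has one; Remark \ref{rcsm}'s ``corresponds to a pair'' is only heuristic. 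Since these identifications are the only possible source of non-injectivity, your paragraph-two assertion that ``two pairs become identified after applying $f$ only if they were already identified'' because $f$ is levelwise injective is precisely the statement to be proved, not a consequence of levelwise injectivity; and the ``filtered-type colimit'' reduction in your last paragraph does not help, since the cube category $\CK$ is not filtered (as you yourself concede, colimits of monomorphisms need not be monomorphisms).

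What is missing is an Eilenberg--Zilber-type lemma showing that the generating identifications lift along a monomorphism $f\colon K\to K'$. Concretely: if $f(x)=s_{p+1}a$ for some $a\in K'_{p}$, then applying $d_{p+1}^{0}$ and using $d_{p+1}^{\varepsilon}s_{p+1}=\operatorname{id}$ gives $a=f(d_{p+1}^{0}x)$ and $f(x)=f(s_{p+1}d_{p+1}^{0}x)$, hence $x=s_{p+1}d_{p+1}^{0}x$ by injectivity of $f$. Thus every elementary identification in $\coprod_{p+q=n}K'_{p}\times L_{q}$ whose first coordinate lies in the image of $f$ stays in that image and lifts uniquely to $K$; a zigzag of elementary moves from $(f(x_{1}),y_{1})$ to $(f(x_{2}),y_{2})$ therefore lifts to a zigzag from $(x_{1},y_{1})$ to $(x_{2},y_{2})$, which is the degreewise injectivity of $f\otimes\operatorname{id}_{L}$ you want (equivalently, one can argue via unique reduced representatives of the equivalence classes). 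With this lemma inserted your levelwise strategy becomes the standard proof; without it \textemdash{} and with the incorrect disjoint-union description doing the work in the representable case \textemdash{} the argument is incomplete.
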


\begin{remark}\label{rcact}
Note that \w{-\otimes I^{n}} preserves colimits, since it has a right
adjoint (defined by constructing the cubical set of maps between two
cubical sets as one does in $\Ss$ \wh see \cite[\S 4]{JardCH}).
Finally, observe that the cubical mapping complex for $\Ss$
(Definition \ref{dcmap}) is simply \w[.]{\mapcv{\Ss}(-,-)=\Sb\map_{\Ss}(-,-)}
\end{remark}
\begin{mysubsect}{The model category}\label{smcc}

Cubical sets were used quite early on as models for topological spaces \wh
see \cite{SerH}, \cite{EMacLAM}, \cite{MunkrS}, \cite{MacLHP},
\cite{PostCR,PostL}, and especially \cite{KanA1,KanA2}.  However, it
was Grothendieck,in \cite{GrotP}, who suggested that more generally
presheaf categories modeled on certain ``test categories'' $D$ can
serve as models for the homotopy category of topological spaces.
Cisinski, in his thesis \cite{CisiP}, carried out this program for
\w{D=\Box} (see also the exposition in \cite{JardCH}).
The model catgeory structure is very similar to the analogous one for
simplicial sets \wb[:]{D=\Delta}

\begin{defn}\label{dmcc}
A map \w{f:K\to L} in $\C$ is
\begin{enumerate}
\renewcommand{\labelenumi}{\alph{enumi})\ }
\item a \emph{weak equivalence} if \w{Tf:TK\to TL} is a weak
  equivalence in $\Ss$ (or equivalently, if \w{|Tf|} is a weak
  equivalence of topological spaces);
\item a \emph{cofibration} if it is a monomorphism.
\item a \emph{fibration} if it has the right lifting property (RLP) with
  respect to all acyclic cofibrations (i.e., those which are also weak
  equivalences) \wh that is, if in all commuting squares in $\C$:
\mydiagram[\label{rlp}]{
A \ar[r]^{g} \ar[d]_{i} & K \ar[d]^{f} \\
B \ar@{.>}[ru]^{\tilde{h}} \ar[r]_{h} & L
}
\noindent where $i$ is an acyclic cofibration, a map \w{\tilde{h}:B\to
  K} exists making the full diagram commute.
\end{enumerate}
\end{defn}

The model category defined here is proper, by \cite[Theorem 8.2]{JardCH}.
\end{mysubsect}

\begin{defn}\label{dscubes}
The \emph{cubical spheres} are \w{S^{n}:=\Sb(\Delta[n]/\partial\Delta[n])} for \w[,]{n\geq 1}
with the obvious basepoint. These corepresent the \emph{homotopy groups}
  \w[.]{\pi_{n}(-):=[S^{n},-]_{\ast}} Similarly,
  \w{S^{0}:=I^{0}\amalg\{\ast\}} corepresents \w[,]{\pi_{0}} and a map
  \w{f:K\to L} in \w{\Ca} is a weak equivalence if and only if it
  induces a \ww{\pi_{n}}-isomorphism for all \w[.]{n\geq 0}

  Note that we may define the fundamental groupoid \w{\hpi K} of an
  unpointed cubical set \w{K\in\C} as for simplicial sets or
  topological spaces (cf.\ \cite[Chapter 2]{HigC}).
\end{defn}

\begin{remark}\label{rmcc}
In analogy with the case of simplicial sets (see \cite[Ch. I]{GJarS})
one can show that cofibrations which are weak equivalences are the
same as the \emph{anodyne maps} \wh that is the closure of the set of
inclusions of the form
%
\begin{myeq}\label{eqfive}
i:\sqcap_{i}^{n,\varepsilon}\hra I^{n+1}
\end{myeq}
\noindent (see \S \ref{ncube}) under cobase change, retracts, coproducts, and
countable compositions (see \cite[\S 4]{JardCH}). Furthermore, the
fibrant objects and the fibrations in $\C$ can also be characterized
by Kan conditions \wh having the RLP with respect to maps of the form
\wref {eqfive} (see \cite{KanA1} and \cite[Theorem 8.6]{JardCH}).

As noted above (\S \ref{dcmap}), $\C$ is a symmetric monoidal closed
category (enriched over itself), with cubical mapping complexes
\w[.]{\mapc(-,-)} As shown in \cite[\S 3]{JardC}), it also satisfies the
cubical analogue of Quillen's Axiom SM7 (cf.\ \cite[II, \S 2]{QuiH}), so
$\C$ deserves to be called a \emph{cubical} model category. In
particular, if $L$ is a fibrant (Kan) cubical set, the function
complex \w{\mapc(K,L)} is fibrant, too, for any (necessarily
cofibrant) \w[.]{K\in\C}
\end{remark}

Finally, the following result shows that $\C$ indeed serves as a model
for the usual homotopy category of topological spaces:

%
%
\begin{prop}[Cf.\ \protect{\cite[Theorem 8.8]{JardCH}}]\label{pone}
The adjoint functors of \wref{eqone} induce equivalences of
homotopy categories \w{\ho\C\cong\ho\Ss} (so together with the pair
\wref[,]{eqtwo} we have \w[).]{\ho\C\cong\ho\TT}
\end{prop}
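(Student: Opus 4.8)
The plan is to deduce Proposition \ref{pone} from the already-cited structural facts rather than building the model structure from scratch. First I would recall that by Definition \ref{dmcc}(a), a map \w{f:K\to L} in $\C$ is a weak equivalence precisely when \w{Tf} is a weak equivalence in $\Ss$; thus $T$ is, by construction, a left Quillen functor that both preserves and reflects weak equivalences. This already shows that the total left derived functor \w{\mathbf{L}T:\ho\C\to\ho\Ss} is well defined and detects isomorphisms. So the only real content is that \w{\mathbf{L}T} is essentially surjective and full, or equivalently that the derived unit and counit of the adjunction \wref{eqone} are isomorphisms in the respective homotopy categories.

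The key steps, in order, would be: (1) Since every object of $\C$ is cofibrant (cofibrations are monomorphisms, Definition \ref{dmcc}(b), so \w{\emptyset\hra K} is always a cofibration), the derived functor \w{\mathbf{L}T} is computed by applying $T$ directly, and similarly \w{\mathbf{R}\Sb} is computed on fibrant simplicial sets. (2) Check that the counit \w{T\Sb X\to X} is a weak equivalence in $\Ss$ for fibrant (Kan) \w[;]{X} this is the cubical analogue of the classical fact about \ww{|\cdot|\circ\Sing} and is exactly what \cite[Theorem 8.8]{JardCH} (or the combination of results on cubical sets as a test category) establishes — one reduces to the standard $n$-cube \w{I^{n}} using that $T$ preserves the colimit \w{K\cong\colim_{I^{n}\in\CK}I^{n}} and \w{TI^{n}\cong\Delta[1]^{n}\simeq\ast}, and that \w{\Sb} preserves enough structure. (3) Dually, check that the unit \w{K\to\Sb TK} is a weak equivalence for all \w{K\in\C}, i.e. that \w{T(K\to\Sb TK)} is a weak equivalence in $\Ss$; by the triangle identities this follows once one knows the counit is an isomorphism on the image of $T$ and that $T$ reflects weak equivalences, which we have. (4) Invoke the general theorem that a Quillen adjunction whose derived unit and counit are natural weak equivalences is a Quillen equivalence, hence induces an equivalence \w[.]{\ho\C\cong\ho\Ss} (5) Compose with the standard Quillen equivalence \wref{eqtwo} to obtain \w[.]{\ho\C\cong\ho\TT}

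The part of this that requires genuine work — and which is precisely why the statement is attributed to \cite[Theorem 8.8]{JardCH} rather than proved here — is step (2): verifying that the derived counit \w{T\Sb X\xra{\sim}X} is a weak equivalence. Unlike the simplicial-to-topological comparison, here both sides are combinatorial, and one cannot simply quote classical point-set topology; one needs either Cisinski's test-category machinery (the fact that $\Box$ is a strict test category, so that presheaves on $\Box$ model spaces via the same localization) or a direct cofinality/skeletal-induction argument showing \w{T} and \w{\Sb} are homotopy inverse on the relevant subcategories. Since the excerpt permits me to assume everything stated earlier, and \wref{eqone} is explicitly a singular–realization pair in the sense of \cite{DKanSR}, the cleanest route is to cite that $T$ creates weak equivalences together with \cite[Theorem 8.8]{JardCH} for the counit, and then let the formal Quillen-equivalence criterion do the rest; I would present the proof essentially as this citation-plus-formal-argument, flagging that the substantive homotopy-theoretic input is entirely in the counit being a weak equivalence.
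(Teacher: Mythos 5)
The paper offers no proof of this proposition at all: it is stated purely as a citation of \cite[Theorem 8.8]{JardCH}, which already asserts the equivalence (indeed a Quillen equivalence) between cubical and simplicial sets. Your proposal is essentially the same approach — all substantive homotopy-theoretic content is deferred to that citation, and the formal Quillen-adjunction scaffolding you add (every object of $\C$ cofibrant, $T$ preserving and reflecting weak equivalences by Definition \ref{dmcc}) is consistent with the paper's setup, so there is nothing to correct.
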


Note that since \w{I^{0}} is a final object in $\C$, the under
category \w{\Ca:=I^{0}/\C} of pointed cubical sets constitutes a pointed
version of $\C$, and we have:

%
%
\begin{fact}\label{fpointed}
There is a model category structure on \w[,]{\Ca} with the same weak
equivalences, fibrations, and cofibrations as $\C$.
\end{fact}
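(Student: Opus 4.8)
The plan is to deduce this from the general principle that an under-category inherits a model structure from its ground category. Specifically, I would invoke the standard fact (see, e.g., \cite[II, \S 2.8]{QuiH} or \cite[\S 7.6]{HirschM}) that if $\M$ is a model category and $A\in\M$ is any object, then the under-category $A/\M$ carries a model structure in which a map is a weak equivalence, fibration, or cofibration precisely when its image under the forgetful functor $U\colon A/\M\to\M$ is one. Applying this with $\M=\C$ (which is a model category by \S \ref{smcc} and Proposition \ref{pone}) and $A=I^{0}$ immediately yields a model structure on $\Ca=I^{0}/\C$ whose weak equivalences, fibrations, and cofibrations are created by the forgetful functor to $\C$, which is exactly the assertion.

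The verification breaks into the usual checklist. First, $\Ca$ has all small limits and colimits: limits are computed as in $\C$ (equipped with the canonical map from $I^{0}$), and colimits are computed by taking the colimit in $\C$ of the enlarged diagram that includes $I^{0}$ mapping compatibly; since $\C$ is cocomplete as a presheaf category, this is no obstacle. Second, the two-out-of-three property and the retract axiom for the three classes are immediate, since they hold in $\C$ and $U$ reflects all three classes by definition. Third, the lifting axioms: a commuting square in $\Ca$ maps under $U$ to a commuting square in $\C$, and a lift there is automatically a map under $I^{0}$ (because both composites with the structure map $I^{0}\to(\cdot)$ agree, by commutativity of the original square in $\Ca$), so lifts transfer back. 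Fourth, functorial factorizations in $\C$ restrict to $\Ca$: given $f\colon (X,x)\to(Y,y)$ in $\Ca$, factor $Uf$ in $\C$ as $Uf = p\circ i$ through an object $Z$; then $i\circ x\colon I^{0}\to Z$ makes $Z$ an object of $\Ca$, and both $i$ and $p$ become maps under $I^{0}$, giving the required factorization.

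The only point requiring a moment's thought is that $I^{0}$ is both the terminal object and, when we pass to the under-category, becomes the initial object of $\Ca$; but since $I^{0}$ is terminal in $\C$, the identity $I^{0}\to I^{0}$ shows $I^{0}$ (viewed as an object under itself) is initial in $\Ca$, and it is also terminal there because the terminal object of $\C$ lifts uniquely to $\Ca$. Hence $\Ca$ is pointed, consistent with the remark preceding the statement. I do not expect any genuine obstacle here: every axiom is inherited along the forgetful functor, and the argument is entirely formal — the substance lies in the already-established model structure on $\C$ (Remark \ref{rmcc}, Proposition \ref{pone}), not in the passage to the pointed setting.
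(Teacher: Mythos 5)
Your proposal is correct and follows essentially the same route as the paper, which simply cites Hovey's result (\cite[Proposition 1.1.8]{HovM}) that the under-category $A/\M$ of a model category inherits a model structure with weak equivalences, fibrations, and cofibrations created by the forgetful functor; you have merely written out the standard verification behind that citation. The only cosmetic point is that your references should be to the paper's bibliography keys (e.g.\ \cite{HovM} or \cite{PHirM}) rather than to undefined ones.
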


\begin{proof}
See \cite[Proposition 1.1.8]{HovM}.
\end{proof}

\begin{mysubsect}{Spherical model categories}\label{sfstr}

Like many other model categories, \w{\Ca} enjoys a collection of
additional useful properties that were axiomatized in \cite[\S 1]{BlaC}
under the name of a \emph{spherical} model category. This means that:
\begin{enumerate}
\renewcommand{\labelenumi}{(\alph{enumi})\ }
\item \w{\Ca} has a set $\A$ of \emph{spherical objects}: cofibrant
  homotopy cogroup objects (namely, the cubical spheres \w{\A=\{S^{n}\}_{n=1}^{\infty}}
  \wh Definition \ref{dscubes}). Furthermore, a map
  \w{f:K\to L} in \w{\Ca} is a weak equivalence if and only if
  \w{[A,f]} is an isomorphism for all \w[.]{A \in \A}
\item Each \w{K\in\Ca} has a functorial \emph{Postnikov tower}
of fibrations:
%
\begin{myeq}\label{eqsix}
\dotsc \to P_{n}K\xra{p\q{n}}P_{n-1}K\xra{p\q{n-1}}\dots\to P_{0}K~,
\end{myeq}
\noindent as well as a weak equivalence
\w{r:K\to P_{\infty}K:=\lim_{n}P_{n}K}
and fibrations \w{r\q{n}:P_{\infty}K\to P_{n}K} such that
\w{r\q{n-1}=p\q{n}\circ r\q{n}} for all $n$, and
\w{r\q{n}_{\#}:\pi_{k}P_{\infty}K\to\pi_{k}P_{n}K}
is an isomorphism for \w{k\leq n} and zero for \w[.]{k>n}
\item For every groupoid $\Lambda$, there is a functorial
  \emph{classifying object} \w{B\Lambda} with \w{B\Lambda\simeq
  P_{1}B\Lambda} and fundamental groupoid \w[,]{\hpi B\Lambda\cong \Lambda}
  unique up to homotopy.
\item Given a groupoid $\Lambda$ and a $\Lambda$-module $G$ (that is,
  an abelian group object over $\Lambda$), for each \w{n\geq 2} there
  is a functorial \emph{extended $G$-Eilenberg-Mac~Lane object}
  \w{E=E^{\Lambda}(G,n)} in \w[,]{\Ca/B\Lambda} unique up to homotopy,
  equipped with a section $s$ for
  \w[,]{(r\q{1}\circ r):E\to P_{1}E\simeq B\Lambda}
  such that \w{\pi_{n}E\cong G} as $\Lambda$-modules and \w{\pi_{k}E=0}
  for \w[.]{k\neq 0,1,n}
\item For every \w[,]{n\geq 1} there is a functor that
assigns to each \w{K\in\Ca} a homotopy pull-back square
%
\mydiagram[\label{eqseven}]{
\ar @{} [dr] |<<<{\framebox{\scriptsize{PB}}}
P_{n+1}K \ar[r]^{p\q{n+1}} \ar[d] &
P_{n}K \ar[d]^{k_{n}}\\ B\Lambda \ar[r] & E^{\Lambda}(M,n+2)
}
\noindent called an $n$-th \emph{$k$-invariant square} for $K$,
where \w[,]{\Lambda:=\hpi K} \w[,]{M:=\pi_{n+1}K} and
\w{p\q{n+1}:P_{n+1}K\to P_{n}K} is the given fibration of the
Postnikov tower.
\end{enumerate}

The map \w{k_{n}:P_{n}K\to E^{\Lambda}(M,n+2)} is called the $n$-th
(functorial) $k$-\emph{invariant} for $K$.
\end{mysubsect}

%
%
\begin{prop}\label{pcsphere}
The category \w{\Ca} is spherical.
\end{prop}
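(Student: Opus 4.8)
The plan is to verify the five axioms (a)--(e) of a spherical model category for $\Ca$ one by one, using the fact that $\Ca$ is Quillen equivalent to $\Sa$ (Proposition \ref{pone} together with Fact \ref{fpointed}) and that $\Sa$ is already known to be spherical (this is one of the motivating examples in \cite[\S 1]{BlaC}). Since every axiom is homotopy-invariant in character, the strategy throughout is: transport the relevant structure across the adjunction $T\dashv \Sb$ of \wref{eqone}, apply the known simplicial construction, and transport back via $\Sb$, checking that the result has the required homotopy-theoretic properties in $\C$.

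First, for axiom (a), I would take $\A=\{S^{n}\}_{n=1}^{\infty}$ with the $S^{n}$ of Definition \ref{dscubes}. These are cofibrant since every object of $\C$ is; they are homotopy cogroup objects because $\Sb$ preserves the relevant homotopy-colimit structure and $\Delta[n]/\partial\Delta[n]$ are cogroup objects in $\Sa$; and the detection-of-weak-equivalences clause is exactly the last sentence of Definition \ref{dscubes}. For axiom (b), the Postnikov tower of $K\in\Ca$ can be built by applying $\Sb$ to the (functorial) simplicial Postnikov tower of $TK$, or directly by a cubical coskeleton/fibrant-replacement construction; the homotopy-group conditions on $r\q{n}_{\#}$ follow from Proposition \ref{pone}. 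For axioms (c) and (d), I would set $B\Lambda := \Sb(B^{\Ss}\Lambda)$ and $E^{\Lambda}(G,n) := \Sb(E^{\Lambda}_{\Ss}(G,n))$, using that $\Sb$ preserves fibrations up to weak equivalence (fibrant replacement may be needed) and that $\hpi$ and $\pi_{\ast}$ are computed as in $\Ss$ by Proposition \ref{pone} and Definition \ref{dscubes}; the section $s$ transports along $\Sb$. For axiom (e), the $k$-invariant square is obtained by applying $\Sb$ to the simplicial $k$-invariant square of $TK$ and checking that $\Sb$ sends a homotopy pullback of fibrant objects to a homotopy pullback (true since $\Sb$ is a right Quillen functor between proper model categories, and properness of $\C$ is recorded after Definition \ref{dmcc}).

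The main obstacle will be the interaction of $\Sb$ with fibrations and homotopy limits: $\Sb$ is a right adjoint so it preserves genuine limits levelwise (cf.\ Remark \ref{rcsm}), but to conclude it preserves \emph{homotopy} pullbacks and Postnikov fibrations one must either know $\Sb$ is right Quillen (which follows from the Quillen equivalence) or insert fibrant replacements and argue that the resulting squares remain homotopy pullbacks. Equivalently, one can avoid transport altogether and give the constructions natively in $\C$ --- coskeleta for Postnikov sections, cubical nerves of groupoids for $B\Lambda$, and twisted cubical Eilenberg--Mac~Lane objects for $E^{\Lambda}(G,n)$ --- but then the bookkeeping of functoriality and of the homotopy-group computations becomes the bulk of the work. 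Either way, once one grants that $T$ and $\Sb$ induce inverse equivalences of homotopy categories and that all the structures in question are homotopy-invariant, each axiom reduces to its already-established simplicial counterpart, so I expect the proof to be short modulo these transport lemmas.
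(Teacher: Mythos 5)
Your proposal is correct and follows essentially the same route as the paper: reduce each axiom to the known simplicial (or topological) case via Fact \ref{fpointed} and the Quillen pair $T\dashv\Sb$ of Proposition \ref{padjenr}, supplemented by native cubical constructions where convenient. The paper makes the same choices you list as alternatives \wh it builds the functorial Postnikov tower natively from $\cskc{n}$ applied to a functorial fibrant replacement, and obtains functorial Eilenberg--Mac\,Lane objects and $k$-invariants by applying $\Sb$ to (or directly reusing) the constructions of Blanc--Dwyer--Goerss \wh so your sketch matches its proof up to these bookkeeping details.
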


\begin{proof}
All the properties for \w{\Ca} follow from Fact \ref{fpointed}, and the
analogous results for \w{\Sa} or \w{\Ta} (see \cite[Theorem 3.15]{BJTurR}).
Note that homotopy groups for cubical sets appear in
\cite{KanA1,KanA2}, while (minimal, and thus non-functorial) Postnikov
towers for cubical sets were constructed by Postnikov in \cite{PostCR,PostL}.

For functorial cubical Postnikov towers, let the $n$-coskeleton
functor \w{\cskc{n}:\C\to\C} be the right adjoint to \w[,]{\skc{n}}
with \w{r\q{n}:\Id\to\cskc{n}} the obvious natural transformation, and
similarly for \w[.]{\Ca} By construction, \w{r\q{n}} is an isomorphism
in dimensions \w[.]{\leq n} If \w{K\in\Ca} is fibrant, so is
\w[,]{\cskc{n}K} and \w{\pi_{i}\cskc{n}K=0} for \w[,]{i>n} since
\w{\skc{n}S^{i}=\ast} for \w[.]{i>n} Thus if \w{K'\to K} is a
functorial fibrant replacement, and we change
\begin{myeq}\label{eqpostnikov}
K'\dotsc\to\cskc{n+1}K'\to\cskc{n}K'\to\cskc{n-1}K'\dotsc
\end{myeq}
\noindent functorially into a tower of fibrations, we obtain \wref[.]{eqsix}

For (strictly) functorial Eilenberg-Mac Lane objects, use
\cite[Prop.\ 2.2]{BDGoeR}, and apply \w[.]{\Sb}
For functorial $k$-invariants in \w[,]{\Ca} use the construction in
\cite[\S 5-6]{BDGoeR} (which works in \w[,]{\Ca} too).
\end{proof}

\begin{remark}\label{rcsk}
In general, the maps \w{\cskc{n}K\to\cskc{n-1}K} in
\wref{eqpostnikov} (adjoint to the inclusion of skeleta) are
\emph{not} fibrations (though the original construction of Kan,
when applied to a fibrant cubical set $K$, yields a tower of
fibrations with no further modification \wh see, e.g., \cite[VI, \S 2]{GJarS}).
However, if we are only interested in a specific Postnikov section
\w[,]{P_{n}} as long as $K$ is fibrant we can use \w{\cskc{n+1}K} as a
fibrant model for \w[,]{P_{n}K} and need only modify the next
section if we want \w{p\q{n+1}:P_{n+1}K\to P_{n}K} to be a fibration.
\end{remark}

%
%
\sect{Cubically enriched categories}
\label{ccec}

In \cite{DKanF}, Dwyer and Kan showed how any model category (more
generally, any small category $\M$ equipped with a class of weak
equivalences) can be enriched by simplicial function complexes, so
that the resulting simplicially enriched category encodes the homotopy
theory of $\M$ (see Remark \ref{rphhod} below).  Thus the category \w{s\Cat}
of simplicial small categories can be thought of as a ``universal
model category'', providing a setting for a ``homotopy theory of homotopy
theories''. Other such universal models were later provided in
\cite[\S 7]{DKSmH}, \cite{RezkM}, and \cite{BergT}.

An important subcategory of \w{s\Cat} consists of those simplicial
categories with a fixed set of objects. This is a special case of
the following:

\begin{defn}\label{dgvc}
For any set $\OO$, denote by \w{\OC} the category of all small
categories $\D$ with \w[.]{\OO:=\Obj\D}
More generally, assume \w{\Gamma\in\OC} is a small category, possibly
non-unital, and let \w{\lra{\V,\otimes}} be a monoidal category.
A \ww{\VG}-\emph{category} is a category \w{\D\in\OC} enriched over $\V$,
with mapping objects \w[,]{\mape{v}{\D}(-,-)\in\V} such that
%
\begin{myeq}\label{eqeight}
\Hom_{\Gamma}(u,v)=\emptyset~~~\Rightarrow~~~\mape{v}{\D}(u,v)~~~
\text{~is the initial object in~}~\V~.
\end{myeq}
Thus when $\V$ is pointed, we require \w{\mape{v}{\D}(u,v)=\ast}
whenever \w[.]{\Hom_{\Gamma}(u,v)=\emptyset}

The category of all \ww{\VG}-categories will be denoted
by \w[.]{\VGC} The morphisms in \w{\VGC} are enriched functors which
are the identity on $\OO$.

When \w{\Hom_{\Gamma}(u,v)} is never empty (so that we may disregard
condition \wref[)]{eqeight} we write \w{\VOC} instead of \w[.]{\VGC}
Dwyer and Kan call these  $\OO$-\emph{diagrams in} $\V$.
\end{defn}

\begin{remark}\label{rnonunital}
If $\Gamma$ is non-unital, \w{\Hom_{\Gamma}(u,u)} may be
empty, in which case \w{\mape{v}{\D}(u,u)} will be empty, if
\w{\V=\Set} or $\Ss$. This is allowed in the enriched version of
semi-categories (see Remark \ref{rscat}).  However, the discussion
below can be readily carried out in the context of ordinary (enriched)
categories, at the cost of paying attention to units. Thus if
$\V$ is pointed, \w{\Hom(u,u)} has (at least) two maps: the identity
and the zero map; these will coincide of $u$ is the zero object.
\end{remark}

We shall in fact concentrate on the case where $\Gamma$ has no self-maps
\w{u\to u} \wh e.g., a non-unital partially ordered set.
The main examples of \w{\lra{\V,\otimes}} to keep in mind are
\w[,]{\lra{\Set,\times}} \w[,]{\lra{\Grp,\times}} \w[,]{\lra{\Gpd,\times}}
\w[,]{\lra{\Ss,\times}} and \w[.]{\lra{\C,\otimes}}

\begin{mysubsect}{$\SO$-categories}\label{ssocats}

Although we shall be mainly concerned with \ww{\CG}-categories, we
first recall the more familiar simplicial version:

Note that when \w[,]{\V=\Ss} an \ww{\SG}-category can be thought of as
a simplicial object over \w{\OC} (or \w[).]{\SeGC}
Thus each \w{\Md\in\SOC} is a simplicial category with fixed object
set $\OO$ in each dimension, and all face and degeneracy functors are
the identity on objects (cf.\ \cite[\S 1.4]{DKanS}).
\end{mysubsect}

\begin{fact}\label{ffree}
The forgetful functor \w{U:\Cat\to\DiG} to the category of directed graphs
has a left adjoint \w[,]{F:\DiG\to\Cat} the \emph{free category} functor
(cf.\ \cite{HasseG}).
\end{fact}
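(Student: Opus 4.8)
The plan is to exhibit the desired left adjoint $F\colon\DiG\to\Cat$ explicitly as the ``path category'' construction and then verify the adjunction directly, rather than invoking an adjoint functor theorem. Given a directed graph $G$ with vertex set $V$ and edge set $E$ (with source and target maps $s,t\colon E\to V$), I would define $FG$ to be the category with object set $V$, and with $\Hom_{FG}(u,v)$ the set of all finite directed paths from $u$ to $v$ in $G$ — that is, finite sequences $(e_{1},\dotsc,e_{k})$ of composable edges with $s(e_{1})=u$, $t(e_{i})=s(e_{i+1})$, and $t(e_{k})=v$ — including the empty path at each vertex, which serves as the identity. Composition is concatenation of paths, which is visibly associative and unital, so $FG$ is a genuine (unital) small category. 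There is an evident graph morphism $\eta_{G}\colon G\to U(FG)$ sending each edge $e$ to the length-one path $(e)$.

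Next I would check the universal property: given any small category $\D$ and any graph morphism $\phi\colon G\to U\D$, I must produce a unique functor $\bar\phi\colon FG\to\D$ with $U(\bar\phi)\circ\eta_{G}=\phi$. On objects $\bar\phi$ must agree with $\phi$; on a path $(e_{1},\dotsc,e_{k})$ it is forced to be the composite $\phi(e_{k})\circ\dotsb\circ\phi(e_{1})$ in $\D$, with the empty path sent to the relevant identity. One then checks this assignment is functorial (it respects concatenation and identities by construction) and that it is the unique such functor, since functoriality together with the constraint on edges determines the value on every path. This establishes the natural bijection $\Hom_{\Cat}(FG,\D)\cong\Hom_{\DiG}(G,U\D)$, and naturality in both variables is routine (both sides are computed by the same formulas on objects and generators). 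Hence $F\dashv U$.

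There is essentially no serious obstacle here; the statement is a standard folklore fact and the proof is purely formal. The only mild point requiring care is bookkeeping about identities and the non-unital variant alluded to in Remark \ref{rnonunital} and the surrounding discussion: if one instead works with the forgetful functor to graphs \emph{without} distinguished loops landing in semi-categories, the left adjoint is the \emph{non-empty} path construction, omitting the empty paths; the argument is otherwise identical. Since the statement as given concerns ordinary categories, I would simply present the path-category construction above, note its functoriality in $G$, and remark that it is classical (with a pointer to \cite{HasseG}), spending at most a line or two on the verification of the universal property.
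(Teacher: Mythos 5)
Your construction is correct: the path category (objects the vertices, morphisms the finite composable edge sequences including empty paths as identities, composition by concatenation) together with the evident unit and the forced definition of $\bar\phi$ on paths is exactly the classical free/forgetful adjunction between \w{\DiG} and \w[.]{\Cat} The paper itself gives no proof of this Fact, citing Hasse instead, and your argument is precisely the standard one that citation stands for, so there is nothing to reconcile; your remark on the non-unital variant (non-empty paths only) is a sensible aside but not needed for the statement.
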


\begin{defn}\label{dfree}
A simplicial category \w{\Ed\in\SOC}  is \emph{free} if each category
\w[,]{E_{n}} and each degeneracy functor \w[,]{s_{j}:E_{n}\to E_{n+1}}
is in the essential image of the functor $F$.

The pair of adjoint functors of Fact \ref{ffree} defines a comonad
\w[,]{FU:\Cat\to\Cat} and thus for each small category $\D$, an
augmented simplicial category \w{\Ed\to\D} with
\w[.]{\E_{n}:=(FU)^{n+1}\D} If \w[,]{\D\in\SeGC} then
\w[.]{\Ed\in\SGC} We denote this canonical \emph{free simplicial resolution}
of $\D$ by \w[.]{\Fs\D}
\end{defn}

\begin{remark}\label{rfree}
In \cite[\S 1]{DKanS}, Dwyer and Kan define a model category structure
on \w{\SOC} (also valid for \w[),]{\SGC} which turns out to be a
\emph{resolution model category} in the sense of \cite{BouC} (see also
\cite{Jar}, \cite[\S 5]{DKStE} and \cite[\S 2]{BJTurR}).
The spherical objects for \w{\SOC} (cf.\ \S \ref{sfstr}(a)) are
objects of the form \w{\Md:=\bSp{n}{u,v}} for \w{n\geq 1} and
\w[,]{\Hom_{\Gamma}(u,v)\neq\emptyset} defined by:
\begin{myeq}\label{eqsphere}
\M(u',v')=\begin{cases}\bS{n} & \text{for \ $u'=u$ \ and \ $v'=v$}\\
             \ast &  \text{otherwise,}\end{cases}
\end{myeq}
\noindent One can also show that \w{\SOC} and \w{\SGC} are spherical \wh
that is, endowed with the additional structure described in \S \ref{sfstr}
(of which only the existence of models is guaranteed in a resolution
model category).
\end{remark}

\begin{mysubsect}{The model category \ww{\CGC}}\label{smccgc}

In the case of \ww{\CG}-categories, the situation is somewhat
complicated by the fact that they cannot simply be viewed as cubical
objects in \w[,]{\Cat} because $\otimes$, and thus the composition
maps, are not defined dimensionwise (see Remark \ref{rcsm}).
Berger and Moerdijk have defined a model category structure for algebras
over coloured operads in a suitable symmetric monoidal model category,
which applies in particular to \w{\CGC} (see \cite{BMoerRC}, and
compare \cite{BMoerA}). However, in this paper we only need to
consider \ww{\CG}-categories for a special type of category $\Gamma$,
for which it is easy to describe an explicit model category structure
in which \w{\WG} is cofibrant:
\end{mysubsect}

\begin{defn}\label{dquasilat}
A small non-unital category $\Gamma$ will be called a
\emph{quasi-lattice} if it has no self-maps; in this case there is a
partial ordering on \w[,]{\OO=\Obj(\Gamma)} with \w{u\prec v} if and
only if \w[,]{\Hom_{\Gamma}(u,v)\neq\emptyset} and we require in
addition that $\Gamma$ be \emph{locally finite} in the sense that for
any \w{u\prec v} in $\OO$, the interval
\w{\seg{u,v}:=\{w\in\OO~|\ u\preceq w\preceq v\}} is finite.
\end{defn}

\begin{example}\label{eglatt}
The simplest example is a \emph{linear lattice} of length \w[,]{n+1} which we
denote by \w[:]{\Gamma_{n+1}} this consists of a single composable
\ww{(n+1)}-chain:
$$
\vi=(\bnp)~\xra{\phi_{n+1}}~\bn~\xra{\phi_{n}}~(\bnm)~\to~\dotsb~\to~\btw~
\xra{\phi_{2}}~\bo~\xra{\phi_{1}}~\bz=\vf~.
$$

Another example is a commuting square:
$$
\xymatrix@R=25pt{
\vi\ar[r]^{\phi'} \ar[d]_{\phi''} & v' \ar[d]^{\psi'} \\ v'' \ar[r]^{\psi''} & \vf
}
$$
\end{example}

Observe that for categories of diagrams indexed on a directed Reedy
category (i.e., one for which the ``inverse subcategory'' is trivial),
the Reedy model structure (cf.\ \cite[\S 15.2.2]{PHirM}) agrees with
the projective model structure. In this situation, cofibrations of
diagrams are those morphisms whose ``latching maps'' are all
cofibrations in the target category, while fibrations and weak
equivalences of diagrams are defined objectwise.

Our current context is sufficiently similar to allow an analogous
inductive  argument, depending on the following analog of Reedy's
latching objects and maps:

\begin{defn}\label{dcdiagram}
Given a quasi-lattice $\Gamma$, a map \w{F:\A\to\B} in \w[,]{\CGC}
and \w{u\prec v} in $\OO$, the \emph{composition category}
\w{(J^{\A,\B}\uv,<)} is a partially ordered set, whose objects are pairs
\w[,]{\lra{\omega,\X}} where $\omega$ is a chain
\w{\lra{u=w_{0}\prec w_{1}\prec \dotsc \prec w_{k-1}\prec w_{k}=v}} in
\w[,]{\lra{\OO,\prec}} and the index $\X$ is either $\A$ or $\B$.
We omit the copy of the trivial chain \w{\lra{u\prec v}} indexed by $\B$.

The partial order is defined by setting
\w{\lra{\omega,\X}\leq\lra{\omega',\X'}} whenever
\w{\omega'} is a (not necessarily proper) subchain of $\omega$, and
either \w{\X=\X'} or \w[,]{\X=\A}
\w[.]{\X'=\B}

The corresponding \emph{composition diagram}
\w{D=D^{\A,\B}\uv:J^{\A,\B}\uv\to\C} is defined by sending
\w{\lra{\omega,\X}} to \w[.]{\bigotimes_{j=1}^{k}~\X^{c}(w_{j-1},w_{j})}
The morphisms are generated by the following
two types of maps:
\begin{enumerate}
\renewcommand{\labelenumi}{(\roman{enumi})\ }
\item If \w{\omega'} is obtained from $\omega$ by omitting internal
  node \w{w_{j}} \wb[,]{1<j<k} the map
  \w{D\lra{\omega,\X}\to D\lra{\omega',\X}} is
\w[,]{\Id\otimes\dotsb\otimes
\cmp{w_{j-1},w_{j},w_{j+1}}^{\X}\dotsb\otimes\Id}
where
$$
\cmp{w_{j-1},w_{j},w_{j+1}}^{\X}:
\X^{c}(w_{j-1},w_{j})\otimes\X^{c}(w_{j},w_{j+1})~
\to~\X^{c}(w_{j-1},w_{j+1})
$$
is the cubical composition map in \w[;]{\X\in\{\A,\B\}}
\item The map \w{D\lra{\omega,\A}\to D\lra{\omega,\B}} is
  \w[.]{\bigotimes_{i=1}^{k}~F_{(w_{i-1},w_{i})}}
\end{enumerate}
\end{defn}

Note that \w[,]{F\uv:\A^{c}(u,v)\to\B^{c}(u,v)} together with the
composition maps of $\B$ ending in \w[,]{\B^{c}(u,v)} induce a map
\w[.]{\varphi\uv:\colim D^{\A,\B}\uv\to\B^{c}(u,v)}
In particular, when \w{\seg{u,v}=\{u,v\}} is minimal,
\w{\colim D^{\A,\B}\uv} is simply \w{\A^{c}(u,v)}
and \w{\varphi\uv} is \w[\vsm.]{F\uv:\A^{c}(u,v)\to\B^{c}(u,v)}

We now provide the details of the model category structure on
\w{\CGC} \wh inter alia, in order to allow the reader to verify that
the construction works in the non-unital setting:

%
%
\begin{lemma}\label{llimit}
If $\Gamma$ is a quasi-lattice, the category  \w{\CGC} has all limits
and colimits.
\end{lemma}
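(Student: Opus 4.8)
The plan is to establish the existence of limits and colimits in $\CGC$ separately, exploiting the fact that the object set $\OO$ is fixed and all functors are the identity on objects. Limits are easy: since the morphisms in $\CGC$ are enriched functors fixing $\OO$, a limit of a diagram of $\CG$-categories can be computed by taking, for each pair $u \prec v$ in $\OO$, the limit in $\C$ of the mapping objects $\mape{c}{\D_i}(u,v)$; the composition maps are induced since $\otimes$ need not commute with limits, but the composition is a \emph{map out of} a tensor product, so one uses the universal property of the target limit. One must check the condition \eqref{eqeight} is preserved, which is immediate since the terminal cubical set $I^0$ (the initial object being $\emptyset$ in the unpointed case, or $\ast$ in the pointed case) is preserved by limits.

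\emph{First} I would dispense with limits as above, noting that the forgetful functor $\CGC \to \prod_{u \prec v} \C$ sending $\D$ to the tuple $(\mape{c}{\D}(u,v))_{u \prec v}$ creates limits. \emph{Then} for colimits, the subtlety is that composition maps in a $\CG$-category have domain $\mape{c}{\D}(u,w) \otimes \mape{c}{\D}(w,v)$, so colimits cannot simply be computed pairwise. Here I would imitate the standard construction of colimits of algebras over a monad (or of ordinary small categories with fixed object set): one builds the colimit freely and then quotients by the relations forcing the composition maps to be compatible. More concretely, since $\Gamma$ is a quasi-lattice, hence locally finite, for each $u \prec v$ one can express the mapping object of the colimit as a colimit over the composition category $J\uv$ (Definition \ref{dcdiagram}) of tensor products of the mapping objects of the pieces of the diagram — this is exactly the role the composition diagram $D\uv$ plays, and local finiteness guarantees these are genuine (finite-length) colimits in $\C$, which exists as a cocomplete category since it is a presheaf category.

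\emph{The key technical point} will be verifying that the candidate object so constructed actually carries a well-defined $\CG$-category structure — that the induced composition maps are associative — and that it satisfies the required universal property. For this I would proceed by induction on the intervals $\seg{u,v}$, ordered by inclusion (this is where local finiteness of $\Gamma$ is essential): for minimal intervals $\seg{u,v} = \{u,v\}$ the mapping object is just the colimit computed pairwise in $\C$, and for larger intervals one adjoins the composites, using Fact \ref{fmonic} (that $- \otimes L$ preserves monomorphisms) to control the latching-type maps and ensure the construction is coherent. The associativity of composition follows from the associativity of $\otimes$ together with the compatibility built into the morphisms (i)-(ii) of the composition diagram.

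\emph{The main obstacle} I anticipate is precisely bookkeeping the colimit construction so that composition is associative and the universal property holds — i.e., making rigorous the inductive passage from smaller to larger intervals while keeping track of how the tensor products $\bigotimes_j \X^c(w_{j-1},w_j)$ glue. The non-unital setting adds a minor wrinkle (one need not worry about degenerate composites involving identities), which actually simplifies matters slightly, as Remark \ref{rnonunital} suggests. Everything else — cocompleteness of $\C$, preservation of \eqref{eqeight}, the pointed variant via Fact \ref{fpointed} — is routine.
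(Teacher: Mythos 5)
Your treatment of limits is exactly the paper's: compute them slotwise in $\C$ for each pair $u\prec v$, and induce the composition using that composition is a map \emph{into} a limit (the paper writes out precisely the map $(\prod_i\A_i[u,w])\otimes(\prod_i\A_i[w,v])\to\prod_i(\A_i[u,w]\otimes\A_i[w,v])\to\prod_i\A_i[u,v]$); your side remark about condition \eqref{eqeight} is slightly garbled but harmless, since in the offending slots every object of $\CGC$ is forced to be initial anyway. For colimits your route differs in its decomposition: you propose to build an \emph{arbitrary} colimit directly, slot by slot, by induction on the cardinality of $\seg{u,v}$, freely adjoining composites via composition-category-type diagrams and then imposing relations, and you correctly identify the associativity/universal-property bookkeeping as the main burden. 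The paper instead constructs only two special cases explicitly \wh coproducts (where the inductive formula $\D(u,v)=\coprod_i\A_i(u,v)\amalg\coprod_{u\prec w\prec v}\D(u,w)\otimes\D(w,v)$ with tautological composition does all the work) and pushouts (where the mapping object is the colimit in $\C$ of a glued composition diagram $D^{\PO}_{(u,v)}$ built from the categories of Definition \ref{dcdiagram}) \wh and then invokes the general categorical fact (the dual of Borceux's construction of limits from products and equalizers/pullbacks) to conclude cocompleteness. That reduction is what lets the paper avoid the heavy coherence bookkeeping you flag as your main obstacle: for coproducts and pushouts the composition structure is either tautological or inherited from the already-defined pieces, so no separate verification of associativity for a general colimit is needed. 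Your approach should work, and note that the key enabling facts are the same in both versions ($\otimes$ commutes with colimits in $\C$ because it is itself a colimit, local finiteness of the quasi-lattice makes the induction terminate), but if you carry it out you should either do the free-then-quotient construction carefully in full generality or, more economically, follow the paper and observe that coproducts plus pushouts suffice.
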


\begin{proof}
For any small category $\Gamma$, the limits in \w{\CGC} are constructed
by taking the limit at each
\w[,]{(u,v)\in\OO^{2}} with compositions defined for the product
\w{\prod_{i\in I}\,\A_{i}} by the obvious maps:
$$
(\prod_{i\in I}\,\A_{i}[u,w])\otimes(\prod_{i\in I}\,\A_{i}[w,v])~\to~
\prod_{i\in I}\,(\A_{i}[u,w]\otimes\A_{i}[w,v])~\xra{\cmp{u,w,v}}~
\prod_{i\in I}\,\A_{i}[u,v]~,
$$
and similarly for the other limits.

For the colimits, note that $\otimes$ is defined as a colimit (cf.\ Definition
\ref{dctens}), so it commutes with colimits in $\C$.
For \ww{\CG}-categories \w[,]{\{\A_{i}\}_{i\in I}} the coproduct
\w{\D:=\coprod_{i\in I}\,\A_{i}} is defined by induction on
the cardinality of \w{\seg{u,v}} in \w[.]{\lra{\OO,\prec}}
When \w{\seg{u,v}=\{u,v\}} is minimal, we let
\w[.]{\D(u,v):=\coprod_{i\in I}\,\A_{i}(u,v)} In general, set
$$
\D(u,v)~:=~\coprod_{i\in I}\,\A_{i}(u,v)~\amalg~
\coprod_{u\prec w\prec v}\D(u,w)\otimes\D(w,v)~,
$$
\noindent with the obvious (tautological) composition on the
right-hand summands\vsm.

Now given maps \w{F:\A\to\B} and \w{G:\A\to\E} in \w[,]{\CGC} the
pushout \w{\PO} is once more defined by induction on the cardinality of
\w[,]{\seg{u,v}} as follows:

In the initial case, when \w{\seg{u,v}} is minimal, \w{\PO(u,v)} is simply the pushout of
\w{\E(u,v)\leftarrow\A(u,v)\to\B(u,v)} in $\C$.

In the induction step, we let \w{J=J^{\PO}\uv} denote the union of the composition categories
\w[,]{J^{\A,\B}\uv} \w[,]{J^{\A,\E}\uv} and \w{J^{\B,\PO}\uv}
(see Definition \ref{dcdiagram}). Thus the objects of $J$ are pairs
\w[,]{\lra{\omega,\X}} where $\omega$ is a chain
\w{\lra{u=w_{0}\prec w_{1}\prec\dotsc w_{k}=v}} and
\w[,]{\X\in\{\A,\B,\E,\PO\}} again omitting \w[.]{\lra{u\prec v,\PO}}
Again $J$ is a partially ordered set, with the order relation defined
to be the union of those for \w[,]{(\A,\B)} \w[,]{(\A,\E)} and \w[.]{(\B,\PO)}

The composition diagrams \w[,]{D^{\A,\B}\uv} \w[,]{D^{\A,\E}\uv}
\w[,]{D^{\B,\PO}\uv} and \w{D^{\E,\PO}\uv} fit together to form a
composition diagram \w[.]{D^{\PO}\uv:J^{\PO}\uv\to\C}
The last two diagrams are well-defined, because we omit the
trivial chain \w[,]{\lra{u\prec v,\PO}} and all other values of
\w{D^{\B,\PO}\uv} and \w{D^{\E,\PO}\uv} have already been defined by
our induction assumption. We now let \w{\PO(u,v)} be the colimit in
$\C$ of the diagram \w[.]{D^{\PO}\uv:J^{\PO}\uv\to\C}

The constructions of the coproducts and pushouts implies that all
colimits exist in \w[,]{\CGC} by the dual of
\cite[Thm.\ 2.8.1 \& Prop.\ 2.8.2]{BorcH1}.
\end{proof}

\begin{defn}\label{dcmodcat}
Let $\Gamma$ be a quasi-lattice, and let $\A$ and $\B$ be
\ww{\CG}-categories. A map \w{F:\A\to\B} in \w{\CGC} is
\begin{enumerate}
\renewcommand{\labelenumi}{(\alph{enumi})\ }
\item a \emph{weak equivalence} if \w{F\uv:\A^{c}(u,v)\to\B^{c}(u,v)} is a
  weak equivalence in $\C$ (see \S \ref{smcc}) for any \w{u\prec v} in $\OO$.
\item a \emph{fibration} if \w{F\uv:\A^{c}(u,v)\to\B^{c}(u,v)} is a (Kan)
   fibration in $\C$ for all \w{u\prec v} in $\OO$.
\item a \emph{(acyclic) cofibration} if for all \w{u\prec v} in
  $\OO$ the maps \w{F\uv:\A^{c}(u,v)\to\B^{c}(u,v)} and
  \w{\varphi\uv:\colim D^{\A,\B}\uv\to\B^{c}(u,v)} are (acyclic)
  cofibrations in $\C$.
\end{enumerate}
\end{defn}

\begin{remark}\label{racycof}
A straightforward induction shows that the acyclic cofibrations so
defined are precisely those cofibrations which are weak equivalences.
\end{remark}

The following lemmas show that these choices yield a model category
structure on \w[:]{\CGC}

%
%
\begin{lemma}\label{lllp}
If $\Gamma$ is a quasi-lattice, \w{F:\A\to\B} is a cofibration and
\w{P:\D\to\E} is an fibration in \w[,]{\CGC} and either $F$ or $P$ is
a weak equivalence, then there is a lifting
$\hat{H}$ in any commutative square
%
\mydiagram[\label{eqnine}]{
\A \ar[r]^{G} \ar[d]_{F} & \D \ar[d]^{P} \\
\B \ar[r]_{H}  \ar@{.>}[ru]^{\tilde{H}} & \E~.
}
\end{lemma}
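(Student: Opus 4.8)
The plan is to establish the lifting property \eqref{eqnine} by the same
induction on the cardinality of the interval $\seg{u,v}$ in $\lra{\OO,\prec}$
that was used to construct colimits in Lemma \ref{llimit}. The point is that,
by Definition \ref{dcmodcat}(c), a cofibration $F:\A\to\B$ in $\CGC$ is built
out of two pieces of data at each pair $u\prec v$: the ``intrinsic'' map
$F\uv:\A^{c}(u,v)\to\B^{c}(u,v)$ and the ``latching'' map
$\varphi\uv:\colim D^{\A,\B}\uv\to\B^{c}(u,v)$; and the combined relative
latching map $\A^{c}(u,v)\amalg_{\colim D^{\A,\B}\uv}\colim D^{\B}\uv\to\B^{c}(u,v)$
is again a cofibration in $\C$ (acyclic if $F$ is a weak equivalence).
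Dually, $P:\D\to\E$ is a fibration iff each $P\uv$ is a Kan fibration in
$\C$. So the strategy is to reduce the lifting problem in $\CGC$ to a
sequence of ordinary lifting problems in $\C$, one for each pair $u\prec v$,
handled in order of increasing $|\seg{u,v}|$, and then invoke the fact that
$\C$ is a model category (\S\ref{smcc}, via \cite[Theorem 8.8]{JardCH} and
the SM7 property of Remark \ref{rmcc}).

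First I would set up the induction. In the base case $\seg{u,v}=\{u,v\}$ is
minimal: here $\colim D^{\A,\B}\uv=\A^{c}(u,v)$, so $F\uv$ itself is a
cofibration in $\C$ (acyclic if $F$ is a weak equivalence), $P\uv$ is a Kan
fibration (acyclic if $P$ is a weak equivalence), and the square of
cubical sets obtained by evaluating \eqref{eqnine} at $(u,v)$ has a lift
$\hat H\uv:\B^{c}(u,v)\to\D^{c}(u,v)$ by the model structure on $\C$. For the
inductive step, fix $u\prec v$ and assume $\hat H_{(w,w')}$ has been chosen,
compatibly with composition, for every $w\prec w'$ with $|\seg{w,w'}|<|\seg{u,v}|$;
equivalently, $\hat H$ is defined on the $\CG$-subcategory of $\B$ generated
by the intervals properly contained in $\seg{u,v}$. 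These partial choices
assemble, via the composition diagrams $D^{\B,\D}$ and the functoriality of
$\otimes$ (Fact \ref{fmonic}), into a map out of the pushout
$\A^{c}(u,v)\amalg_{\colim D^{\A,\B}\uv}\colim D^{\B,\cdot}\uv$ into
$\D^{c}(u,v)$ that is compatible with the evident map to $\E^{c}(u,v)$
coming from $H\uv$. The relative latching map of $F$ at $(u,v)$ is a
cofibration in $\C$ by Definition \ref{dcmodcat}(c) (acyclic when $F$ is a
weak equivalence, using Remark \ref{racycof}), and $P\uv$ is a Kan fibration
(acyclic when $P$ is), so one more lift in $\C$ produces
$\hat H\uv:\B^{c}(u,v)\to\D^{c}(u,v)$ extending the partial data and
respecting composition with the already-defined pieces. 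Iterating over all
pairs, in order of $|\seg{u,v}|$, yields the enriched lift $\hat H$ (the
$\tilde H$ of the diagram); local finiteness of $\Gamma$ guarantees the
induction is well-founded and every $\colim D$ involved is a finite colimit.

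The main obstacle is bookkeeping rather than conceptual: one must check that
the lifts $\hat H\uv$ chosen at successive stages are mutually
\emph{compatible with the composition maps} of $\B$ and $\D$, so that $\hat H$
is genuinely a morphism of $\CG$-categories and not merely a levelwise
collection of maps. This is exactly where the relative latching map enters —
by lifting against that map (whose source is the pushout recording all lower
composites), rather than against $F\uv$ alone, we force the new lift to agree
with the previously constructed composites on the relevant subobject of
$\B^{c}(u,v)$. One should also verify the non-unital case causes no trouble:
since $\Gamma$ is a quasi-lattice it has no self-maps, so there are no
identity morphisms $u\to u$ to be preserved and the composition diagrams
$D^{\A,\B}\uv$ only involve proper subchains, exactly as in
Definition \ref{dcdiagram}. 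With these checks in place the lemma follows.
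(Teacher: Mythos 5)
Your proposal is correct and follows essentially the same argument as the paper: induction on the cardinality of \w[,]{\seg{u,v}} lifting against \w{F\uv} in the base case and against the map of Definition \ref{dcmodcat}(c) in the inductive step, which is exactly how compatibility with composition is enforced. The only quibble is notational: the pushout \w{\A^{c}(u,v)\amalg_{\cdot}\colim D^{\B}\uv} you write down is redundant, since \w{\colim D^{\A,\B}\uv} already contains \w{\A^{c}(u,v)} (via the trivial chain indexed by $\A$), so the ``relative latching map'' you lift against is just \w{\varphi\uv} itself, as in the paper's proof.
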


\begin{proof}
We choose \w{\tilde{H}\uv:\B^{c}(u,v)\to\D^{c}(u,v)} by induction on
the cardinality of the interval \w{\seg{u,v}} in \w[:]{\lra{\OO,\prec}}

When \w{\seg{u,v}=\{u,v\}} is minimal, we simply choose a lift
  \w{\tilde{H}\uv} in:
$$
\xymatrix@R=25pt{
\A^{c}\uv \ar[rr]^{G\uv} \ar[d]_{F\uv} & & \D^{c}\uv \ar[d]^{P\uv} \\
\B^{c}\uv \ar[rr]_{H\uv}  \ar@{.>}[rru]^{\tilde{H}\uv} & & \E^{c}\uv
}
$$
using the fact that \w{F\uv} is a cofibration and \w{P\uv} an acyclic
fibration in $\C$ (see \wref{rlp} above).

In the induction step, assume we have chosen compatible lifts \w{\tilde{H}\lb{u',v'}} for all
proper subintervals \w[.]{\seg{u',v'}\subset\seg{u,v}} These yield a
map $\hat{G}$ making the following solid square commute in $\C$:
$$
\xymatrix@R=25pt{
\colim D^{\A,\B}\uv \ar[d]_{\varphi\uv} \ar[rr]^{\hat{G}} & & \D^{c}\uv \ar[d]^{P\uv} \\
\B^{c}(u,v) \ar[rr]_{H\uv}  \ar@{.>}[rru]^{\tilde{H}\uv} & & \E^{c}\uv
}
$$
\noindent and since \w{\varphi\uv} is a cofibration by
Definition \ref{dcmodcat}, and \w{P\uv} is an acyclic fibration by
assumption, the lifting \w{\tilde{H}\uv} exists.

The same argument shows that there exists a lifting in \wref{eqnine}
when \w{F:\A\to\B} is an acyclic cofibration and \w{P:\D\to\E} is a
fibration.
\end{proof}

%
%
\begin{lemma}\label{lfact}
If $\Gamma$ is a quasi-lattice, any map \w{F:\A\to\B} in \w{\CGC} factors as:
%
\mydiagram[\label{eqten}]{
\A \ar[rd]^{I} \ar[rr]^{F} & & \B \\
& \D \ar[ru]_{P} &
}
\noindent where $I$ is a cofibration and $P$ is a fibration; and we can require
either $I$ or $P$ to be a weak equivalence.

\end{lemma}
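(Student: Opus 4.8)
The plan is to construct the factorization \wref{eqten} by induction on the cardinality of the interval $\seg{u,v}$ in $\lra{\OO,\prec}$, reducing at each stage to the factorization axiom in the model category $\C$ (\S \ref{smcc}), in the spirit of the proofs of Lemmas \ref{llimit} and \ref{lllp}. The object $\D$, the components $I\uv$ and $P\uv$, and the compositions of $\D$ are produced simultaneously; since the compositions of $\D$ are read off from the same colimits, $I$ and $P$ are automatically morphisms in $\CGC$.

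When $\seg{u,v}=\{u,v\}$ is minimal, $\colim D^{\A,\B}\uv=\A^{c}(u,v)$ and $\varphi\uv=F\uv$, so we factor $F\uv:\A^{c}(u,v)\to\B^{c}(u,v)$ in $\C$ as a cofibration followed by an acyclic fibration (first variant) or as an acyclic cofibration followed by a fibration (second variant), with $\D^{c}(u,v)$ the intermediate object. For the inductive step, suppose $\D$, $I$ and $P$ have been built over all proper subintervals of $\seg{u,v}$, with $I$ a cofibration, $P$ a fibration, and one of them a weak equivalence. Applying Definition \ref{dcdiagram} to the partially defined map $I\colon\A\to\D$ gives the diagram $D^{\A,\D}\uv$, whose values are all already available (the trivial chain indexed by $\D$ is omitted). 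A short analysis of the poset $J^{\A,\D}\uv$ identifies $\colim D^{\A,\D}\uv$ with the latching-corner pushout $\A^{c}(u,v)\amalg_{L\A}L\D$, where $L\A$ (resp.\ $L\D$) is the colimit over the chains $u=w_{0}\prec\dotsb\prec w_{k}=v$ with $k\ge 2$ of the tensors $\bigotimes_{j=1}^{k}\A^{c}(w_{j-1},w_{j})$ (resp.\ $\bigotimes_{j=1}^{k}\D^{c}(w_{j-1},w_{j})$), the two legs being the composition map $L\A\to\A^{c}(u,v)$ and the map $L\A\to L\D$ induced by $I$ on proper subintervals. The maps $P$ on proper subintervals, the composition maps of $\B$, and $F\uv$ on the trivial $\A$-chain assemble into a canonical map $g\colon\colim D^{\A,\D}\uv\to\B^{c}(u,v)$ (well-defined since $P\circ I=F$ over the proper subintervals), which we factor in $\C$ — as a cofibration $j$ then an acyclic fibration $p$ (first variant), or an acyclic cofibration $j$ then a fibration $p$ (second variant). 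We take $\D^{c}(u,v)$ to be the intermediate object, set $\varphi\uv:=j$ and $P\uv:=p$, and let $I\uv$ be the composite $\A^{c}(u,v)\xra{\iota}\colim D^{\A,\D}\uv\xra{j}\D^{c}(u,v)$ with $\iota$ the structure map.

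To check the conclusion: each $P\uv$ is a (possibly acyclic) fibration in $\C$, so $P$ is a fibration; and $\varphi\uv=j$ is a cofibration, which is the latching half of the requirement (Definition \ref{dcmodcat}(c)) that $I$ be a cofibration. The other half — that each $I\uv$ be a monomorphism — holds because $I\uv=j\circ\iota$ with $j$ monic and $\iota$ a cobase change of $L\A\to L\D$, the latter monic since it is built from the cofibrations $I\lb{w_{j-1},w_{j}}$ by tensoring and by the colimits appearing in the latching objects, all of which preserve monomorphisms (Fact \ref{fmonic}). This completes the first variant. In the second variant one must also see that $I$ is a weak equivalence, equivalently (Remark \ref{racycof}) an acyclic cofibration: here $j$ is anodyne and $\iota$ is a cobase change of $L\A\to L\D$, which is again anodyne, being assembled from the anodyne maps $I\lb{w_{j-1},w_{j}}$ by tensoring and by colimits — operations that preserve anodyne maps, since every object of $\C$ is cofibrant and $\C$ satisfies axiom SM7 (Remark \ref{rmcc}). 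Hence each $I\uv=j\circ\iota$ is anodyne, in particular a weak equivalence.

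The emptiness condition \wref{eqeight} is automatic: $\Hom_{\Gamma}(u,v)=\emptyset$ forces $\A^{c}(u,v)=\B^{c}(u,v)=I^{0}$, hence $\D^{c}(u,v)=I^{0}$; and no identity map of $\Gamma$ ever intervenes, so the argument applies verbatim to non-unital $\Gamma$. The main obstacle I anticipate is the verification for the second variant — showing that the colimits building the latching objects $L\A$ and $L\D$ interact with the anodyne maps of $\C$ well enough that $L\A\to L\D$ is anodyne; the tidiest alternative, which one could adopt throughout, is to present the model structure on $\CGC$ as cofibrantly generated, with generating (acyclic) cofibrations the images under the evident left adjoints $\C\to\CGC$ (one for each $u\prec v$) of the boundary inclusions $\partial I^{n}\hookrightarrow I^{n}$ (resp.\ the square-horn inclusions \wref{eqfive}), so that both factorizations come from the small-object argument. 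All the rest is a routine transcription of the directed-Reedy factorization into the cubically-enriched setting.
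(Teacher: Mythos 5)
Your proposal is essentially the paper's own proof: the same induction on the cardinality of \w[,]{\seg{u,v}} with \w{\D^{c}(u,v)} obtained by factoring in $\C$ the map from the latching pushout \w{\A^{c}(u,v)\amalg_{L\A}L\D} (which is exactly the paper's \w[,]{\PO\uv} built from the colimits \w{\Coli{\E}{u,v}} in \wref[)]{eqtwelve} to \w[,]{\B^{c}(u,v)} with the compositions, \w[,]{I\uv} and \w{P\uv} read off from the structure maps. The differences are cosmetic: you treat the cofibration/acyclic-fibration variant first and flag the preservation of (acyclic) cofibrations under the latching colimits as the delicate point, which the paper handles at the corresponding step by the same appeal to Fact \ref{fmonic}, the pushout-product property, and cobase change, leaving the other variant to a \emph{mutatis mutandis}.
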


\begin{proof}
Again construct $\D$, $I$, and $P$ in \wref{eqten} by induction on the
cardinality of \w[.]{\seg{u,v}} When \w{\seg{u,v}=\{u,v\}} is minimal,
choose any factorization:
$$
\xymatrix@R=25pt{
\A^{c}\uv \ar[rd]^{I\uv} \ar[rr]^{F\uv} & & \B^{c}\uv \\
& \D^{c}\uv \ar[ru]_{P\uv} &
}
$$
where \w{I\uv} is an acyclic cofibration and \w{P\uv} is a fibration
in $\C$\vsm.

Now assume by induction that we have chosen compatible factorizations
%
\mydiagram[\label{eqtwelve}]{
\A^{c}\lb{u,w}\otimes\A^{c}\lb{w,v} \ar[rr]^{\zeta^{\A}\uv}
\ar[d]_{I\lb{u,w}\otimes}\ar[d]^{I\lb{w,v}} &&
\Coli{\A}{u,v} \ar[rr]^{\omega^{\A}} \ar[d]^{\phi\uv} &&
\A^{c}\uv \ar[d]^{\eta\uv} \\
\D^{c}\lb{u,w}\otimes\D^{c}\lb{w,v} \ar[rr]^{\zeta^{\D}\uv}
\ar[d]_{P\lb{u,w}\otimes}\ar[d]^{P\lb{w,v}} &&
\Coli{\D}{u,v} \ar[rr]^{\theta\uv} \ar[d]^{\psi\uv} &&
\PO\uv \ar[d]^{\xi\uv} \\
\B^{c}\lb{u,w}\otimes\D^{c}\lb{w,v} \ar[rr]^{\zeta^{\B}\uv} &&
\Coli{\B}{u,v} \ar[rr]^{\omega^{\B}} && \B^{c}\uv
}
\noindent where each cubical set \w{\Coli{\E}{u,v}} (for
\w[)]{\E=\A,\B,\D} is the colimit over all proper subintervals
\w{\seg{u',v'}\subset\seg{u,v}}  and \ \w{u'\prec w\prec v'} of the diagram of
composition maps
$$
\w{\cmp{u',w,v'}^{\E}:\E^{c}(u',w)\otimes\E^{c}(w,v')~\to~\E^{c}(u',v')}.
$$
\noindent in each row,
\w{\zeta^{\E}:\E^{c}\lb{u,w}\otimes\E^{c}\lb{w,v}\to\Coli{\E}{u,v}}
is the structure map for the colimit, while
\w{\omega^{\E}:\Coli{\E}{u,v}\to\E^{c}\lb{u,v}} is induced by the
compositions. The cubical set \w{\PO\uv} is the pushout of the upper
right-hand square, with structure maps \w{\eta\uv} and \w[,]{\theta\uv}
and \w{\xi\uv} is induced on the pushout by \w{F\uv} and the maps
\w{\psi\uv} (from the naturality of the colimit) and
\w[.]{\omega^{\B}}

Note that the map \w{I\lb{u,w}\otimes I\lb{w,v}} is an acyclic
cofibration in $\C$ (see Fact \ref{fmonic}), so the induced map
\w{\phi\uv} is, too, as is \w[,]{\eta\uv} by cobase change. The map
\w[,]{P\lb{u,w}\otimes P\lb{w,v}} as well as the induced map
\w[,]{\psi\uv} comes from the compatible factorizations \wref[.]{eqtwelve}

Finally, choose a factorization
$$
\xymatrix@R=25pt{
\PO(u,v) \ar[rd]^{\zeta\uv} \ar[rr]^{\xi\uv} & & \B^{c}(u,v) \\
& \D^{c}(u,v) \ar[ru]_{P\uv} &
}
$$
\noindent where \w{\zeta\uv} is an acyclic cofibration and \w{P\uv} is a
fibration in $\C$. This defines the cubical set \w[,]{\D^{c}(u,v)}
which is equipped with composition maps
$$
\cmp{u,w,v}:=\zeta\uv\circ\theta\uv\circ\zeta^{\D}\uv~.
$$
Setting \w{I\uv:=\zeta\uv\circ\eta\uv} yields the required acyclic
cofibration, and since \w{\xi\uv} is induced by $F$, we have
\w[,]{P\uv\circ I\uv=F\uv} as required\vsm.

The same construction, \emph{mutatis mutandis}, yields a factorization
\wref{eqten} where $I$ is a cofibration and $P$ an acyclic fibration.
\end{proof}

%
%
\begin{thm}\label{tcmodcat}
If $\Gamma$ is a quasi-lattice, Definition \ref{dcmodcat} provides a
model category structure on \w[.]{\CGC}
\end{thm}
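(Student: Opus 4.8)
**The plan is to verify the model category axioms MC1–MC5 of Quillen one at a time, relying on the preceding lemmas for the substantive parts.** Axiom MC1 (existence of all limits and colimits) is exactly Lemma~\ref{llimit}. Axiom MC2 (the two-out-of-three property for weak equivalences) is immediate: a weak equivalence in $\CGC$ is defined objectwise on the cubical sets $\A^{c}(u,v)$, and weak equivalences in $\C$ satisfy two-out-of-three (being detected by $T$ and the usual model structure on $\Ss$). Axiom MC3 (closure of weak equivalences, fibrations and cofibrations under retracts) is likewise objectwise: fibrations and weak equivalences are defined by the corresponding conditions on each $F\uv$, which are retract-closed in $\C$; for cofibrations one notes that a retract of $F$ in $\CGC$ induces, for each $u\prec v$, a retract both of $F\uv$ and of the latching map $\varphi\uv\colon\colim D^{\A,\B}\uv\to\B^{c}(u,v)$, using that $\colim D^{\A,\B}\uv$ is functorial in the map $F$ (an inspection of Definition~\ref{dcdiagram} shows a retract diagram of maps yields a retract diagram of composition diagrams, hence of their colimits).

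**Next I would dispatch the lifting and factorization axioms MC4 and MC5 using the lemmas already proved.** Lemma~\ref{lllp} is precisely axiom MC4: it provides the lift in the square~\eqref{eqnine} both when $F$ is an acyclic cofibration and $P$ a fibration, and when $F$ is a cofibration and $P$ an acyclic fibration. Lemma~\ref{lfact} is precisely axiom MC5: it factors any $F\colon\A\to\B$ as a cofibration followed by an acyclic fibration, and as an acyclic cofibration followed by a fibration. So after MC1–MC3, the only remaining task is to confirm the internal consistency of the definitions—specifically that ``acyclic cofibration'' as defined in Definition~\ref{dcmodcat}(c) coincides with ``cofibration that is a weak equivalence,'' which is the content of Remark~\ref{racycof}, and symmetrically (though this is automatic from MC4 and MC5 by the standard retract argument) that an acyclic fibration is the same as a fibration that is a weak equivalence. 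Once those identifications are in place, the five axioms are all accounted for.

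**The one point requiring genuine care—and the step I expect to be the main obstacle—is the characterization of acyclic cofibrations (Remark~\ref{racycof}), together with making sure the class defined in Definition~\ref{dcmodcat}(c) is exactly the class one needs for the lifting axioms to close up.** Concretely, one must show by induction on $|\seg{u,v}|$ that if $F$ is a cofibration and a weak equivalence then each latching map $\varphi\uv$ is an acyclic cofibration in $\C$: the cofibration part follows from Fact~\ref{fmonic} (applied repeatedly to see $\colim D^{\A,\B}\uv\hookrightarrow$ is built from monomorphisms, since $-\otimes L$ preserves monos) and the induction hypothesis that the lower-dimensional latching maps are cofibrations; the acyclicity then follows because in the relevant pushout expression for $\colim D^{\A,\B}\uv$, the maps being glued are acyclic cofibrations by induction, $-\otimes L$ of an acyclic cofibration is an acyclic cofibration in $\C$ (monomorphism by Fact~\ref{fmonic}, weak equivalence because $T$ is strongly monoidal by~\eqref{eqthree} and products of acyclic cofibrations of simplicial sets are acyclic cofibrations), and acyclic cofibrations are stable under cobase change and transfinite composition. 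The converse direction—an acyclic cofibration in the sense of Definition~\ref{dcmodcat}(c) is a weak equivalence—is trivial since $F\uv$ is assumed acyclic. With Remark~\ref{racycof} thus justified, combining Lemmas~\ref{llimit}, \ref{lllp}, and \ref{lfact} with the routine objectwise verifications of MC2 and MC3 completes the proof that Definition~\ref{dcmodcat} gives a model structure on $\CGC$.
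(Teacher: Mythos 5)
Your proposal is correct and follows essentially the same route as the paper: the paper's proof likewise cites Lemma~\ref{llimit} for completeness and cocompleteness, Lemma~\ref{lllp} for the lifting axiom, and Lemma~\ref{lfact} for the factorizations, checking the remaining closure and two-out-of-three properties objectwise. Your additional attention to retract-closure and to justifying Remark~\ref{racycof} (needed to match Definition~\ref{dcmodcat}(c) with ``cofibration which is a weak equivalence'') only spells out steps the paper leaves as brief assertions, though in the acyclicity induction you should finish by noting that $F\uv$ factors through the canonical acyclic cofibration $\A^{c}(u,v)\to\colim D^{\A,\B}\uv$, so that $\varphi\uv$ is a weak equivalence by two-out-of-three.
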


\begin{proof}
The category \w{\CGC} is complete and cocomplete by Lemma \ref{llimit}.
The classes of weak equivalences and fibrations are clearly closed under
compositions, and include all isomorphisms.  The same holds for cofibrations by
an induction argument.  Also, if two out of the three maps
$F$, $G$, and \w{G\circ F} are weak equivalences, so is the third.
The lifting properties for (co)fibrations are in Lemma \ref{lllp}, and the
factorizations are given by Lemma \ref{lfact}.
\end{proof}

As expected, the two key types of \ww{\VG}-categories are related by
suitable functors (compare \cite[Prop.~6.4.3]{BorcH2}):

%
%
\begin{prop}\label{padjenr}
For any quasi-lattice $\Gamma$, the functors \w{T:\C\to\Ss} and
\w{\Sb:\Ss\to\C} of \wref{eqone} extend to functors \w[.]{\CGC\adj{}{}\SGC}
Futhermore, this is a strong Quillen pair (cf.\ \cite[\S 8.5.1]{PHirM}),
and descends to an adjunction at the level of homotopy categories.
\end{prop}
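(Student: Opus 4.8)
The plan is to obtain the extensions from the monoidal properties of $T$ and $\Sb$ recorded in Remark~\ref{rcsm}, and then to check the Quillen condition levelwise on the mapping objects.

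By \wref{eqthree} the triangulation functor $T\colon\lra{\C,\otimes}\to\lra{\Ss,\times}$ is strong monoidal. Its right adjoint $\Sb$ therefore inherits, as the mate of the structure on $T$, a canonical lax monoidal structure: the map $\Sb X\otimes\Sb Y\to\Sb(X\times Y)$ is the composite of $\vartheta\colon\Sb X\otimes\Sb Y\to\Sb X\times\Sb Y$ from \wref{eqfour} with the isomorphism $\Sb X\times\Sb Y\cong\Sb(X\times Y)$ (which holds because $\Sb$, as a right adjoint, preserves products), and the unit is the isomorphism $I^{0}\cong\Sb(\Delta[0])$ between terminal objects; coherence follows from the compatibility of $\vartheta$ with the associativity and switch isomorphisms. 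Now a lax monoidal functor $\V\to\W$ induces a base-change functor between the categories of enriched categories on a fixed object set $\OO$ (compare \cite[Prop.~6.4.3]{BorcH2}). Since moreover $T$ and $\Sb$ both send the empty (initial) object to the empty object — $T$ as a left adjoint, and $\Sb$ because $TI^{n}\neq\emptyset$ forces $(\Sb\emptyset)_{n}=\Hom_{\Ss}(TI^{n},\emptyset)=\emptyset$ — the quasi-lattice condition \wref{eqeight} is preserved, and we obtain extended functors $T\colon\CGC\to\SGC$ and $\Sb\colon\SGC\to\CGC$, each the identity on $\OO$. Because the lax structure on $\Sb$ is the mate of that on $T$, the adjunction $T\dashv\Sb$ is monoidal, and hence the extensions again form an adjoint pair.

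For the Quillen statement, recall that the adjunction $T\dashv\Sb$ of \wref{eqone} is a Quillen pair: by Definition~\ref{dmcc}(a) the functor $T$ detects weak equivalences, and it is easily seen to preserve monomorphisms, so $T$ is left Quillen (and is part of the Quillen equivalence of Proposition~\ref{pone}); consequently $\Sb\colon\Ss\to\C$ is right Quillen, carrying (acyclic) Kan fibrations to (acyclic) fibrations in $\C$. By Definition~\ref{dcmodcat}, a map in $\CGC$ is an (acyclic) fibration exactly when each component $F\uv\colon\A^{c}(u,v)\to\B^{c}(u,v)$ is one in $\C$, and the Dwyer--Kan model structure on $\SGC$ (Remark~\ref{rfree}) is characterized the same way. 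Hence if $P\colon\Md\to\Nd$ is an (acyclic) fibration in $\SGC$, each component $(\Sb P)\uv=\Sb(P\uv)$ is an (acyclic) fibration in $\C$, so $\Sb P$ is an (acyclic) fibration in $\CGC$; thus $\Sb$ is right Quillen and $T\dashv\Sb$ is a Quillen pair. (Alternatively one checks directly that $T$ is left Quillen: as $T$ preserves colimits and is strong monoidal, it takes the composition diagram $D^{\A,\B}\uv$ of Definition~\ref{dcdiagram} to its simplicial analogue, hence the latching map $\varphi\uv$ to the latching map of the image, and $T$ preserves the relevant monomorphisms and weak equivalences.) Finally, a Quillen pair always descends to a derived adjunction between the homotopy categories, which is the last assertion.

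The step I expect to require the most care is the claim that the extensions form an adjoint pair: concretely, that an $\SGC$-functor out of the extension of $\A$ corresponds, under the adjunction $T\dashv\Sb$ of \wref{eqone} applied to each mapping object, to a $\CGC$-functor into the extension of $\Md$, i.e.\ that compatibility with composition passes to adjuncts. This is exactly the statement that $T\dashv\Sb$ is a monoidal adjunction; with that in hand, the Quillen property and the derived adjunction follow formally from the levelwise descriptions of the two model structures.
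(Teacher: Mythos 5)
Your proposal is correct and follows essentially the same route as the paper: extend $T$ using the strong monoidal isomorphism \eqref{eqthree}, extend \w{\Sb} by defining compositions via the natural map $\vartheta$ of \eqref{eqfour} (composed with the product-preservation isomorphism), and deduce the Quillen property from the fact that fibrations and weak equivalences in both \w{\CGC} and \w{\SGC} are detected on the mapping objects. The only difference is that you spell out details the paper leaves implicit (coherence of the lax structure on \w[,]{\Sb} preservation of condition \eqref{eqeight}, and the monoidal-adjunction argument showing the extensions remain adjoint), which is a welcome elaboration rather than a different proof.
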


\begin{proof}
The functor $T$ extends to \w{\CGC} by \wref[.]{eqthree} For
\w[,]{\Sb} given \w[,]{\A^{s}\in\SGC} with composition
\w{\xi:\A^{s}(u,w))\times \A^{s}(w,v)\to\A^{s}(u,v)}
we define the composition map
\w{\cmp{u,w,v}:\Sb(\A^{s}(u,w))\otimes\Sb(\A^{s}(w,v))\to\Sb(\A^{s}(u,v))}
for the \ww{\CG}-category \w{\Sb\A^{s}} to be the composite
\w{\Sb\xi\circ\vartheta} (see \wref[).]{eqfour}
As $\Sb$ is a strong right Quillen functor, it follows from the
definitions that the extension is also strong right Quillen.
\end{proof}

\begin{mysubsect}{Semi-spherical structure on \ww{\CGC}}\label{ssscgc}

The discussion above, including the model category structures, is
valid when we replace $\C$ or $\Ss$ by their pointed versions
(see \cite[Proposition 1.1.8]{HovM}). Moreover, even though we cannot
construct entry-wise spheres for \ww{\CG}-categories as in
\wref[,]{eqsphere} the category \w{\CGC} may be called
\emph{semi-spherical}, in the sense of having the rest of the spherical
structure described in \S \ref{sfstr}, as follows:
\end{mysubsect}

\begin{defn}\label{dpi}
Given a quasi-lattice $\Gamma$ and a \ww{\CaG}-category $\A$, its
\emph{fundamental groupoid} is the \ww{(\Gpd,\Gamma)}-category
obtained by applying the fundamental groupoid functor \w{\hpi}
to $\A$. Note that because \w{\hpi:\C\to\Gpd} factors through
\w[,]{T:\C\to\Ss} using \wref{eqthree} we see that \w{\hpi\A} is
indeed a \ww{(\Gpd,\Gamma)}-category (cf.\ \cite[Prop.~6.4.3]{BorcH2}).

Similarly, for each \w{n\geq 2} the functor \w[,]{\pi_{n}} applied
entrywise to $\A$, yields a \ww{(\Grp,\Gamma)}-category, which is actually
a \ww{(\RM{\hpi\A},\Gamma)}-category (see Definition \ref{dgvc}).
Note that, as for topological spaces, \w{\pi_{n}\A} is a module over
\w[.]{\hpi\A}
\end{defn}

\begin{enumerate}
\renewcommand{\labelenumi}{\alph{enumi})\ }
\item Each \ww{\CaG}-category $\A$ has a functorial Postnikov tower,
  obtained by applying the functors \w{P_{n}} of \S \ref{eqsix}
  to each \w[,]{\A^{c}(u,v)} and using
  \begin{equation*}
  \begin{split}
  P_{n}(\A(u,v)) \otimes P_{n}(\A(v,w)) &\to P_{n}(P_{n}(\A(u,v)) \otimes P_{n}(\A(v,w))) \\
    &\cong P_{n}(\A(u,v) \otimes \A(v,w)) \to P_{n}(\A(u,w)) \ .
   \end{split}
  \end{equation*}
\item For every \ww{(\Gpd,\Gamma)}-category $\Lambda$, there is a
  functorial \emph{classifying object} \w[.]{B\Lambda\in\CaGC}
\item Given a \ww{(\Gpd,\Gamma)}-category $\Lambda$, and
  a $\Lambda$-module $G$ (i.e., an abelian group object in
  \w[),]{\SeGC/\Lambda} for each \w{n\geq 2} there
  is a functorial \emph{extended $G$-Eilenberg-Mac~Lane object}
  \w{E^{\Lambda}(G,n)} in \w[.]{\CaGC/B\Lambda}
\item For  \w[,]{n\geq 1} there is is a functorial $k$-invariant
  square for $\A$ as in \wref[.]{eqseven}
\end{enumerate}

All these properties are straightforward for \w{\SaGC} (by applying
the analogous functors for \w{\Sa} componentwise), and they may
be transfered to \w{\CaGC} using Proposition \ref{padjenr}.

\begin{defn}\label{dcoh}
Given a \ww{(\Gpd,\Gamma)}-category $\Lambda$, a
$\Lambda$-module $G$, a \ww{\CaG}-category $\A$, and a \emph{twisting map}
\w[,]{p:\A\to B\Lambda} we define the $n$-th \ww{\CG}-\emph{cohomology
group} of $\A$ with coefficients in $G$ to be
$$
H^{n}_{\Lambda}(\A,G)~:=~[\A,E^{\Lambda}(G,n)]_{\CGC/B\Lambda}.
$$
\end{defn}

\begin{remark}\label{rcoh}
Typically, we have \w[,]{\Lambda=\hpi\A} with the obvious map $p$.

More generally, in \cite{DKSmO} Dwyer, Kan, and Smith give a definition
of the \ww{\SO}-cohomology of any \ww{\SO}-category with coefficients in a
$\Lambda$-module $G$; and there is also a relative version, for a pair
\w{(\A,\B)} (cf.\ \cite[\S 2.1]{DKSmO}). It is straightforward to verify
that the two definitions of cohomology coincide (when they are both
defined) under the correspondence of Proposition \ref{padjenr}.
\end{remark}

%
%
\sect{Lattices and higher homotopy operations}
\label{clhh}

We can now define higher homotopy operations as obstructions
to rectifying a homotopy commutative diagram \w[,]{X:\K\to\ho\TT}
using the approach of \cite{BMarkH}, with the modification in the
pointed case given in \cite{BChachP}. For this purpose, it is
convenient to work with a specific cofibrant cubical resolution of the
indexing category $\K$.  We need make no special assumptions about
$\K$ at this stage.

Boardman and Vogt originally defined their ``bar construction'' \w{\WK}
topologically (see \cite[III, \S 1]{BVogHI}). The \ww{\CO}-version may be
described as follows:

\begin{defn}\label{dwconst}
The \emph{W-construction} on a small category $\K$ with \w{\OO=\Obj\K} is the
\ww{\CO}-category \w[,]{\WK} with the cubical mapping complex
\w{\WK(a,b)} for every \w[,]{a,b\in\Obj(\K)} constructed as follows:

For every composable sequence
\begin{myeq}\label{eqcompseq}
\w{\fd=(a=a_{n+1}\xra{f_{n+1}}a_{n}\xra{f_{n}}a_{n-1}\dotsc a_{1}
\xra{f_{1}}a_{0}=b)}
\end{myeq}
\noindent of length \w{n+1} in $\K$, there is an $n$-cube \w{\II{n}{\fd}}
in \w[,]{\WK(a,b)} subject to two conditions:

\begin{enumerate}
\renewcommand{\labelenumi}{(\alph{enumi})~}
\item The $i$-th $0$-face of \w{\II{n}{\fd}} is identified with
  \w[,]{\II{n-1}{f_{1}\circ\dotsc\circ (f_{i}\cdot f_{i+1})\circ\dotsc f_{n+1}}}
  that is, we carry out the $i$-th composition in the sequence \w{\fd} (in the
  category $\K$).
\item The cubical composition
$$
\WK(a_{0},a_{i})\otimes\WK(a_{i},a_{n+1})~\to~\WK(a_{0},a_{n+1})=\WK(a,b)
$$
identifies the ``product'' \ww{(n-1)}-cube
\w{\II{i}{f_{0}\circ\dotsc\circ f_{i}}\otimes
       \II{n-i-1}{f_{i+1}\circ\dotsc\circ f_{n+1}}}
with the $i$-th $1$-face of \w[.]{\II{n}{\fd}}
\end{enumerate}
\end{defn}

\begin{notn}\label{ncompose}
Note the three different kinds of composition that occur in \w[:]{\WK}

\begin{enumerate}
\renewcommand{\labelenumi}{(\alph{enumi})~}
\item The \emph{internal} composition of $\K$ is denoted by
  \w[,]{f\cdot g} or simply \w[.]{fg}
\item The \emph{cubical} composition of \w[,]{\WK} denoted by
  \w[,]{f\otimes g} which corresponds to the $\otimes$-product of the
  associated cubes.
\item The \emph{potential} composition of \w[,]{\WK} denoted by
   \w[,]{f\circ g} is the heart of the $W$-construction: it provides
   another dimension in the cube for the homotopies between \w{f\otimes g}
   and \w[.]{f\cdot g}
\end{enumerate}
Thus a composable sequence \w{\fd} as in \wref{eqcompseq} (indexing a cube
in \w[)]{\WK} will be denoted in full by \w[;]{f_{1}\circ\dotsc\circ f_{n+1}}
the composed map \w{f_{1}f_{2}\dotsb f_{n+1}:a\to b} in $\K$ is denoted by
\w[;]{\comp{\fd}} and the cubical composite
\w{f_{1}\otimes f_{2}\otimes\dotsb\otimes f_{n+1}} will be denoted by
\w{\otimes\fd} (as an index for a suitable cube in \w[).]{\WK}
\end{notn}

\begin{defn}\label{dvertices}
The \emph{minimal} vertex of \w{\II{n}{\fd}} is
\w[,]{\II{0}{\comp{\fd}}} which is in the image of all $0$-face maps.
The opposite \emph{maximal} vertex, in the image of all $1$-face maps, is indexed
by \w{\otimes{\fd}} according to the convention above, with
\w{\II{0}{f_{1}}\otimes\II{0}{f_{2}}\otimes\dotsb\otimes\II{0}{f_{n+1}}}
identified with \w{\II{0}{\otimes\fd}} under the iterated cubical compositions.
\end{defn}

If we think of a small category $\K$ as a constant cubical category in
\w{\COC} for \w[,]{\OO=\Obj\K} there is an obvious map of \ww{\CO}-categories
\w[,]{\gamma^{c}:\WK\to\K} and following work of \cite{Lei} and \cite{Cord} we show:

%
%
\begin{lemma}\label{lwkresol}
The map \w{T\gamma^{c}:T\WK\to T\K=\K} may be identified with
\w{\gamma^{s}:\Fs\K\to\K} (see \S \ref{scss} \textit{ff.}).
\end{lemma}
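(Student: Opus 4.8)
The plan is to compare the two constructions degreewise, exploiting the strongly monoidal triangulation functor $T:\lra{\C,\otimes}\to\lra{\Ss,\times}$ of \wref{eqthree}. Recall that $\Fs\K$ is built by iterating the comonad $FU$ from Fact \ref{ffree}, so its simplicial mapping complex $\Fs\K(a,b)$ in dimension $n$ has as $n$-simplices the composable sequences $(FU)^{n+1}$-worth of formal composites, i.e.\ fully parenthesized chains of composable maps in $\K$ from $a$ to $b$, with face maps given either by performing one internal composition in $\K$ (the outermost or innermost application of $U$) or by deleting a pair of parentheses, and degeneracies inserting parentheses. On the other hand, $T\WK(a,b)=\colim_{I^{k}\in\C_{\WK(a,b)}}\Delta[1]^{k}$, so its $n$-simplices are indexed by the nondegenerate cubes of $\WK(a,b)$ — i.e.\ by composable sequences $\fd$ as in \wref{eqcompseq} — together with a simplex of $\Delta[1]^{k}$, which amounts to an order-preserving assignment of $0$'s and $1$'s to the coordinates. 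First I would set up an explicit bijection between these two indexing sets: a cube $\II{k}{\fd}$ of $\WK(a,b)$ corresponds to a chain of $k{+}1$ composable arrows, its $k$ "gaps" correspond to the $k$ potential compositions $\circ$, and a vertex (or simplex) of the associated $\Delta[1]^{k}$ records, for each gap, whether one has performed the cubical $\otimes$-composition (coordinate $1$, via condition (b)) or the internal $\cdot$-composition (coordinate $0$, via condition (a)); a partially-composed state in between corresponds to a positive-dimensional simplex of $\Delta[1]^{k}$. This is exactly the combinatorics of a parenthesized word: a $1$ at a gap signals a "break point" between two subwords that are composed via $\otimes$ and then each recursively resolved, while a $0$ at a gap signals that the two neighbors are multiplied in $\K$.

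Next I would check this identification is compatible with faces and degeneracies. On the $\Fs\K$ side, the simplicial structure is the standard bar/comonad resolution one; on the $T\WK$ side, the face and degeneracy maps of the colimit $\colim\Delta[1]^{k}$ are governed by the cubical face maps $d^{\varepsilon}_{i}$ and degeneracies $s_{i}$ of $\WK(a,b)$ glued along the $\Delta[1]^{k}$'s. Here is where Definition \ref{dwconst}(a)--(b) does the real work: the $0$-face $d^{0}_{i}$ of $\II{k}{\fd}$ performs the $i$-th internal composition, matching the $\K$-composition face map of $\Fs\K$; the $1$-face $d^{1}_{i}$ splits $\fd$ as a cubical product and then triangulates, which under $T$ and the Eilenberg--Zilber-type identification $T(K\otimes L)\cong TK\times TL$ matches the "delete a pair of parentheses" face map of the bar resolution; and degeneracies in both cases correspond to repeating an arrow / inserting a trivial parenthesization. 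I would organize this as an inductive argument on $n$ (equivalently on the total number of maps in the composable sequence), using Notation \ref{ncompose}'s trichotomy of compositions to bookkeep which face does what, and invoking naturality of $\vartheta$ and of the isomorphism \wref{eqthree} to see the gluings agree.

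Finally I would observe that the two augmentations agree: both $T\gamma^{c}$ and $\gamma^{s}$ send a (parenthesized) composable chain to its total composite $\comp{\fd}$ in $\K$, since $\gamma^{c}:\WK\to\K$ collapses every cube $\II{k}{\fd}$ to the vertex $\II{0}{\comp{\fd}}$ (Definition \ref{dvertices}) and $\gamma^{s}:\Fs\K\to\K$ is the counit of the comonad, which multiplies out all formal composites. Hence the isomorphism of bisimplicial/simplicial mapping complexes is an isomorphism over $\K$, and it is the identity on objects, so it is an isomorphism of \ww{\SO}-categories $T\WK\cong\Fs\K$ commuting with the maps to $\K$. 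I expect the main obstacle to be purely bookkeeping: making the correspondence between cubical faces of $\WK$ and the two kinds of bar-resolution faces of $\Fs\K$ precise and checking it respects \emph{all} iterated compositions and degeneracies simultaneously — in particular verifying that the colimit defining $T\WK(a,b)$ really is the same simplicial set as the one underlying $(FU)^{\bullet+1}\K(a,b)$, rather than just having the same simplices in each degree. The cited work of Leinster \cite{Lei} and Cordier \cite{Cord} should supply the precise dictionary, and \wref{eqthree} is the technical device that lets the cubical product on the $\WK$ side turn into the cartesian product implicit in the comonad resolution.
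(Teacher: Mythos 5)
Your proposal is correct and takes essentially the same route as the paper's proof: the paper likewise identifies the simplices of \w{\Fs\K} with the triangulation of each cube \w{\II{n}{\fd}} of \w{\WK} by the \w{n!} full parenthesizations of \w{\fd} (its Figure 1), with $0$-faces corresponding to internal composition and $1$-faces to the $\otimes$-decomposition, reducing to the linear lattice \w{\Gamma_{n+1}} cube-by-cube and citing \cite[\S 3]{BBlaC} for the canonical identification with the standard triangulation \w{\Delta[1]^{n}} of \w[.]{I^{n}} (Only a minor slip: \cite{Lei} is Leitch, not Leinster.)
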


\begin{proof}
Consider an individual cube  \w{\II{n}{\phid}} of \w[:]{\WK}
this is isomorphic to \w[,]{\WG_{n+1}}  where \w{\Gamma_{n+1}}
(Example \ref{eglatt}) consists of a composable sequence of \w{n+1} maps:
$$
(\bnp)~\xra{\phi_{n+1}}~\bn~\xra{\phi_{n}}~(\bnm)~\to~\dotsb~\to~\btw~
\xra{\phi_{2}}~\bo~\xra{\phi_{1}}~\bz~.
$$
\noindent The free simplicial resolution of \w{\FG_{n+1}} is
the triangulation of the $n$-cube \w{\II{n}{\phid}} by \w{n!}
$n$-simplices, corresponding to the possible full parenthesizations
of \w{\phid} (see Figure \ref{fig1}).

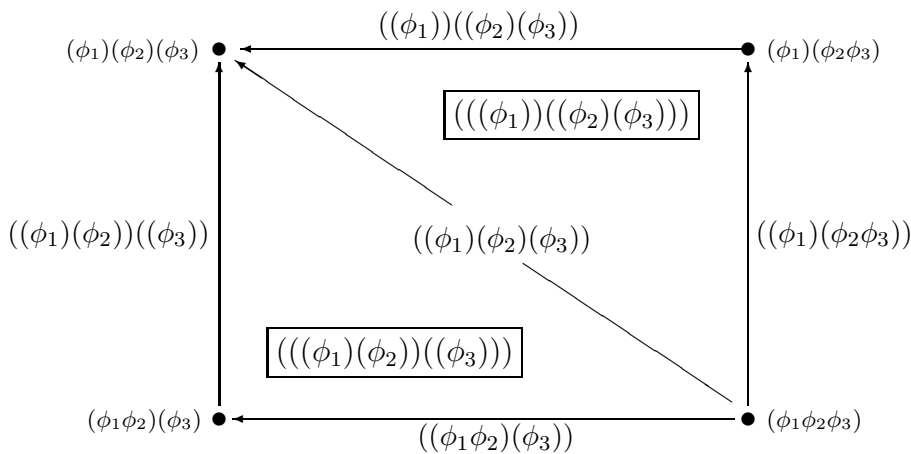
\begin{figure}[htb]
\begin{center}
%
%
%
\begin{picture}(300,170)(40,0)
%
%
\put(37,152){{\scriptsize $(\phi_{1})(\phi_{2})(\phi_{3})$}}
\put(95,155){\circle*{5}}
\put(293,155){\vector(-1,0){190}}
\put(155,161){{\small $((\phi_{1}))((\phi_{2})(\phi_{3}))$}}
\put(295,155){\circle*{5}}
\put(302,152){{\scriptsize $(\phi_{1})(\phi_{2}\phi_{3})$}}
%
%
\put(95,20){\vector(0,1){130}}
\put(15,82){{\small $((\phi_{1})(\phi_{2}))((\phi_{3}))$}}
\put(95,15){\circle*{5}}
\put(45,12){{\scriptsize $(\phi_{1}\phi_{2})(\phi_{3})$}}
%
%
\put(290,15){\vector(-1,0){190}}
\put(170,5){{\small $((\phi_{1}\phi_{2})(\phi_{3}))$}}
\put(295,15){\circle*{5}}
\put(302,12){{\scriptsize $(\phi_{1}\phi_{2}\phi_{3})$}}
%
%
\put(295,20){\vector(0,1){130}}
\put(298,82){{\small $((\phi_{1})(\phi_{2}\phi_{3}))$}}
%
%
\put(183,96){\vector(-3,2){82}}
\put(168,80){{\small $((\phi_{1})(\phi_{2})(\phi_{3}))$}}
\put(210,74){\line(3,-2){79}}
%
%
\put(180,127){\fbox{$(((\phi_{1}))((\phi_{2})(\phi_{3})))$}}
\put(113,37){\fbox{$(((\phi_{1})(\phi_{2}))((\phi_{3})))$}}
\end{picture}
\caption[fig1]{The triangulated $2$-cube \w{\FG_{3}}}
\label{fig1}
\end{center}
\end{figure}

This may be identified canonically with the standard
triangulation \w{\Delta[1]^{n}\in\Ss} of \w{I^{n}\in\C}
(see \cite[\S 3]{BBlaC}), thus indeed
identifying \w{\Fs\K} with \w[.]{T\WK}
\end{proof}

%
%
\begin{prop}\label{pwgresol}
If $\Gamma$ is a quasi-lattice, the map of \ww{\CG}-categories
\w{\gamma^{c}:\WG\to\Gamma} is a cofibrant resolution.
\end{prop}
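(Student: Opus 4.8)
The claim unpacks into two assertions: that $\WG$ is cofibrant in the model category $\CGC$ of Theorem~\ref{tcmodcat}, and that $\gamma^{c}\colon\WG\to\Gamma$ is a weak equivalence there. Note that $\Gamma$, regarded as a constant $\CG$-category, is already fibrant, since its mapping complexes $\Gamma^{c}(u,v)=\Hom_{\Gamma}(u,v)$ are discrete and hence Kan; so $\gamma^{c}$ will in fact exhibit $\WG$ as a cofibrant replacement of $\Gamma$. I would treat the weak equivalence first, as it is quickly reduced to known facts, and then concentrate on cofibrancy, which is the substantive point.

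For the weak equivalence: by Definition~\ref{dcmodcat}(a) it suffices to check that $\gamma^{c}\uv\colon\WG^{c}(u,v)\to\Gamma^{c}(u,v)$ is a weak equivalence in $\C$ for each $u\prec v$, and by Definition~\ref{dmcc}(a) this may be tested after triangulation, i.e.\ one checks that $T(\gamma^{c}\uv)$ is a weak equivalence in $\Ss$. By Lemma~\ref{lwkresol}, $T\gamma^{c}$ is identified with the augmentation $\gamma^{s}\colon\FG\to\Gamma$ of the canonical free simplicial resolution, so $T(\gamma^{c}\uv)=\gamma^{s}\uv\colon\FG(u,v)\to\Hom_{\Gamma}(u,v)$. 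Now $\gamma^{s}$ is a weak equivalence in $\SGC$ by the standard property of comonadic resolutions: the extra degeneracies of the underlying simplicial graph $U\FG\to U\Gamma$ exhibit, for each $u\prec v$, the simplicial set $\FG(u,v)$ as simplicially homotopy equivalent to the discrete set $\Hom_{\Gamma}(u,v)$ (this is the precise sense in which the $W$-construction "resolves" $\Gamma$). Hence each $\gamma^{c}\uv$ is a weak equivalence in $\C$, and $\gamma^{c}$ is a weak equivalence.

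For cofibrancy I would show that the unique map $\emptyset_{\Gamma}\to\WG$ out of the initial $\CG$-category, which has $\emptyset_{\Gamma}^{c}(u,v)=\emptyset$ for every pair $(u,v)$, is a cofibration. By Definition~\ref{dcmodcat}(c) this requires, for each $u\prec v$, that $\emptyset\to\WG^{c}(u,v)$ and $\varphi\uv\colon\colim D^{\emptyset_{\Gamma},\WG}\uv\to\WG^{c}(u,v)$ be monomorphisms in $\C$; the first is automatic. For the second I would induct on the cardinality of $\seg{u,v}$ in $\lra{\OO,\prec}$. When $\seg{u,v}=\{u,v\}$ is minimal there are no composable sequences of length $>1$ from $u$ to $v$ inside the interval, so $\WG^{c}(u,v)=\Hom_{\Gamma}(u,v)$ is discrete while $J^{\emptyset_{\Gamma},\WG}\uv$ carries only copies of the empty cubical set; thus $\colim D^{\emptyset_{\Gamma},\WG}\uv=\emptyset$ and $\varphi\uv$ is trivially monic. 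In the inductive step one identifies, via the structure maps of the colimit, $\colim D^{\emptyset_{\Gamma},\WG}\uv$ with the sub-cubical set of $\WG^{c}(u,v)$ spanned by the cells lying in the image of some iterated cubical composition $\WG^{c}(u,w)\otimes\WG^{c}(w,v)\to\WG^{c}(u,v)$ with $u\prec w\prec v$, together with their faces — that is, the union of the ``$1$-faces'' of the cubes $\II{n}{\fd}$ of Definition~\ref{dwconst}(b). Each such composition map is a monomorphism: its source is, by the inductive hypothesis applied to the proper subintervals $\seg{u,w}$ and $\seg{w,v}$ and by Fact~\ref{fmonic}, an iterated tensor of monomorphisms, and the map itself realizes this source as a union of proper faces of cubes of $\WG^{c}(u,v)$. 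It then remains to check that these subcomplexes are glued along exactly their pairwise intersections, in the pattern encoded by the poset $J^{\emptyset_{\Gamma},\WG}\uv$, so that the colimit identifies no distinct cells and $\varphi\uv$ is injective. (Equivalently, this is the statement that $\WG$ is built from $\emptyset_{\Gamma}$ as a cell complex in $\CGC$, attaching the cube $\II{n}{\fd}$ for each composable $\fd$ along the subcomplex already present.)

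The main obstacle is precisely this last step: verifying that the composition faces of $\WG^{c}(u,v)$ meet only along common subfaces and exactly in the manner prescribed by $J^{\emptyset_{\Gamma},\WG}\uv$. This is a coherence statement about iterated cubical composition in the $W$-construction — in effect, about the combinatorics of parenthesizations of composable chains underlying Figure~\ref{fig1} — and once it is established the remainder is formal: the reduction to intervals, the base case, and the passage from monicity of all the latching maps $\varphi\uv$ to cofibrancy of $\WG$ (using Theorem~\ref{tcmodcat} and Fact~\ref{fmonic}) present no further difficulty.
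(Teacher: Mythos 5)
Your strategy coincides with the paper's: the weak equivalence is obtained by identifying \w{T\gamma^{c}} with the augmentation \w{\gamma^{s}:\Fs\Gamma\to\Gamma} of the comonadic free simplicial resolution (Lemma \ref{lwkresol}), using the fact that weak equivalences in $\C$ are detected by $T$; and cofibrancy is reduced, via Definition \ref{dcmodcat}(c), to showing that each map \w{\varphi\uv:\colim D\uv\to\WG^{c}(u,v)} is a cofibration in $\C$. The weak-equivalence half is complete and matches the paper (the paper cites the comonadic/extra-degeneracy argument for \w{\gamma^{s}} rather than spelling it out).

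The step you defer as ``the main obstacle'' \wh that the colimit maps isomorphically onto the union of the images of the cubical composition maps, with no spurious identifications \wh is precisely the point the paper's proof does settle, and it is closed by the one observation you already state: on each product cube the composition is the inclusion
\w{\II{n}{\fd}\otimes\II{k}{\gd}\cong\II{n+k}{\fd\otimes\gd}\subseteq\II{n+k+1}{\fd\circ\gd}}
of a $1$-face. From this, each composition map is a subcomplex inclusion, so each vertex \w{D\lra{\omega,\WG}} of the colimit diagram maps isomorphically onto the subcomplex of \w{\WG^{c}(u,v)} of cells carried by faces split at every internal node of the chain $\omega$; and if a cell lies in the subcomplexes attached to two chains $\omega$ and \w[,]{\omega'} then all nodes of \w{\omega\cup\omega'} occur in a single composable sequence of $\Gamma$, hence are comparable, so \w{\omega\cup\omega'} is again a chain, the cell lies in the image of \w[,]{D\lra{\omega\cup\omega',\WG}} and its two preimages are already identified in the colimit via the maps out of \w[.]{D\lra{\omega\cup\omega',\WG}} (If \w{\omega\cup\omega'} is not a chain the two subcomplexes are disjoint.) Hence \w{\varphi\uv} is exactly the inclusion of the subcomplex of all $1$-faces, which is a cofibration \wh this is the paper's argument. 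So you located the right crux but left it unproven; also note that the monomorphy of the composition maps comes from this $W$-construction combinatorics, not from the inductive hypothesis on proper subintervals, which plays no essential role here.
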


\begin{proof}
The map of \ww{\SO}-categories \w{\gamma^{s}:\Fs\K\to\K}
is a weak equivalence, since \w{\Fs} is defined by
a comonad (see \cite[\S 1]{CPorV}). Thus \w{\Fs\K}  is indeed a free
simplicial resolution of $\K$ (see \cite[\S 2.4]{DKanS}, \cite[\S 2]{CPorV},
and \cite[\S 2.21]{BMarkH}).  Having identified
\w{\gamma^{s}:\Fs\K\to\K} with \w[,]{T\gamma^{c}:T\WK\to T\K=\K} it follows from
Proposition \ref{padjenr} that \w{\gamma^{c}} is a weak equivalence.

By construction, each composition map
\w{\WK(a,b)\otimes\WK(b,c)\to\WK(a,c)} of \w{\WK} is an inclusion of
a sub-cubical complex, since on every ``product'' cube
\w{\II{n}{\fd}\otimes\II{k}{\gd}\cong\II{n+k}{\fd\otimes\gd}\subseteq
\II{n+k+1}{\fd\circ\gd}} it is the inclusion of a $1$-face.
Thus the map \w{\varphi\uv:\colim D^{\ast,\B}\uv\to\B^{c}(u,v)} of Definition
\ref{dcdiagram} is just the inclusion of the sub-cubical complex
consisting of all the $1$-faces, which is a cofibration (in fact, an
anodyne map). This shows that \w{\WK} is cofibrant.
\end{proof}

\begin{mysubsect}{Rectifying homotopy commutative diagrams}\label{srhcd}

We can use the cofibrant resolution \w{\WK\to\K} to study the
rectification of a homotopy-commutative diagram
\w{\tX:\K\to\ho\M} in some model category $\M$ (such as $\TT$ or \w[).]{\Ta}

Since the $0$-skeleton of \w{\WK} is isomorphic to \w[,]{F\K}
choosing an arbitrary representative \w{X_0(f)} for each homotopy class
\w{\tX(f)} for each morphism $f$ of $\K$, yields
a lifting of $\tX$ to \w[.]{X_{0}:\skc{0}\WK\to\M}

Note that a choice of a $0$-realization \w{X_{0}:F\K\to\M}
is equivalent to choosing basepoints in each relevant component
of each \w[,]{\M^{c}(u,v)} although
of course this cannot be done coherently unless $\tX$ is rectifiable.
\end{mysubsect}

\begin{remark}\label{rskel}
Our goal is to extend \w{X_{0}} over the skeleta of \w[.]{\WK}
However, the ``naive'' cubical skeleton functor \w{\skc{k}:\C\to\C}
(\S \ref{dcube}) is not monoidal with respect to $\otimes$ (unlike the
simplicial analogue), so it does not commute with composition maps.
Nevertheless, one can define a $k$-skeleton functor for
\ww{\CO}-categories in general; when $\Gamma$ is a quasi-lattice (\S
\ref{dpi}) and $\A$ is a cofibrant \ww{\CG}-category (such as \w[),]{\WG}
\w{\skc{k}\A} can be defined by simply including all $\otimes$-product
cubes of $i$-cubes in $\A$ with \w[.]{i\leq k} Of course, if $\A$
is $n$-dimensional (that is, has no non-degenerate $i$-cubes for
\w[),]{i>n} then \w{\skc{n}\A=\A} agrees with the naive $n$-skeleton.
\end{remark}

If $\M$ is cubically enriched (\S \ref{dcmap}),
extending \w{X_{0}} to a cubical functor \w{X_{1}:\skc{1}\WK\to\M} is
equivalent to choosing homotopies between each $\tX(f_1 \circ f_2)$ and
$\tX(f_1) \circ \tX(f_2)$, since the $1$-cubes of \w{\WK} correspond to
all possible (two term) factorizations of maps in $\K$. Extending \w{X_{1}}
further to \w{X_{2}:\skc{2}\WK\to\M} means choosing homotopies between the
homotopies for three-fold compositions, and so on.

This is the idea underlying a fundamental result of Boardman and Vogt:

\begin{thm}[\protect{\cite[Cor.~4.21 \& Thm.~4.49]{BVogHI}}]\label{tbv}
A diagram \w{\tX:\K\to\ho\TT} lifts to $\TT$ if and only if it extends
to a simplicial functor \w[.]{X_{\infty}:\WK\to\TT}
\end{thm}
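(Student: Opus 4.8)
The plan is to prove Theorem \ref{tbv} by translating Boardman--Vogt's original topological bar construction argument into the cubical language we have set up, using the fact that $\WK \to \K$ is a cofibrant resolution (Proposition \ref{pwgresol}) and the comparison functors $T, \Sb$ of Proposition \ref{padjenr}. First I would observe that the ``only if'' direction is essentially formal: if $\tX$ lifts to a strict functor $Y\colon \K \to \TT$, then precomposing with $\gamma^{c}\colon \WK \to \K$ (or rather its topological avatar) and using the counit $\WK(a,b) \to I^{0}$ together with the strict functoriality of $Y$ produces the required $X_{\infty}$; more precisely, since $T\WK \simeq \Fs\K$ is weakly equivalent to $\K$, any strict lift of $\tX$ can be pulled back along the resolution map to give a coherent diagram on $\WK$.

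For the harder ``if'' direction, the plan is to show that a cubically enriched functor $X_{\infty}\colon \WK \to \M$ (with $\M = \TT$, via the cubical enrichment of Definition \ref{dcmap}) yields a strict functor $\K \to \TT$ up to homotopy, hence a lift of $\tX$. The key step is that $\gamma^{c}\colon \WK \to \K$ is a weak equivalence of $\CO$-categories between cofibrant objects in the model structure of Theorem \ref{tcmodcat} (extended to the $\OO$-indexed, non-quasi-lattice case, or rather appealing directly to the Dwyer--Kan model structure on $\SOC$ after applying $T$), with $\K$ viewed as a constant cubical category. Since $\WK$ is cofibrant and $\gamma^{c}$ is an acyclic fibration-like comparison, the enriched functor $X_{\infty}$ factors, up to enriched homotopy, through $\K$ itself; translating back through the adjunction $T \dashv \Sb$ and the equivalences $\ho\C \cong \ho\Ss \cong \ho\TT$ of Proposition \ref{pone} gives the desired strict diagram in $\TT$ realizing $\tX$. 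Concretely, one builds the lift skeleton-by-skeleton over $\WK$ exactly as sketched in the paragraphs preceding the theorem (choosing $X_{0}$ on $\skc{0}\WK = F\K$, then extending to $\skc{1}$, $\skc{2}$, etc.), and the existence of $X_{\infty}$ guarantees all these extensions exist; conversely its restriction to $\skc{0}$ already determines the homotopy classes $\tX(f)$.

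The main obstacle I expect is bookkeeping rather than conceptual: one must verify that the cubical $W$-construction $\WK$ genuinely computes the Boardman--Vogt topological $\WK$ after triangulation and realization, i.e.\ that $|T\WK(a,b)|$ is homeomorphic (or at least weakly equivalent, compatibly with compositions) to the topological mapping space of the classical $W$-construction. This is exactly the content of Lemma \ref{lwkresol} together with the identification of the triangulated $n$-cube with $\Delta[1]^{n}$; granting that, the theorem reduces to the classical statement \cite[Cor.~4.21 \& Thm.~4.49]{BVogHI}, so strictly speaking the cleanest proof is simply to invoke Lemma \ref{lwkresol} and Proposition \ref{pwgresol} to reduce to Boardman and Vogt's original result. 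I would therefore present the proof as: (i) apply $T$ to an enriched functor $X_{\infty}\colon \WK \to \M$ to get a simplicial functor $T\WK \to T\M$; (ii) identify $T\WK$ with $\Fs\K$ via Lemma \ref{lwkresol} and recall this is the standard cofibrant simplicial resolution; (iii) when $\M = \TT$, note this data is precisely a homotopy-coherent diagram in the sense of Boardman--Vogt/Cordier, so the cited theorem applies verbatim; (iv) the converse is the trivial pullback along $\gamma^{c}$. The one genuine check is that the cubical composition structure on $\WK$ matches Boardman--Vogt's under triangulation, which is where the ``potential composition'' $f \circ g$ of Notation \ref{ncompose} must be shown to correspond to their homotopy coordinate.
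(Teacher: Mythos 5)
The paper does not actually prove this statement: Theorem \ref{tbv} is imported verbatim from Boardman--Vogt, with the citation \cite[Cor.~4.21 \& Thm.~4.49]{BVogHI} standing in for a proof, and the surrounding discussion (the skeleton-by-skeleton description of extensions over $\skc{k}\WK$, Lemma \ref{lwkresol}, Proposition \ref{pwgresol}) only explains how the cubical $W$-construction matches the classical one. So your final plan (i)--(iv) --- apply $T$, identify $T\WK$ with $\Fs\K$ via Lemma \ref{lwkresol}, recognize the data as a Boardman--Vogt/Cordier homotopy-coherent diagram, and quote the cited theorem, with the converse given by precomposition with $\gamma^{c}$ --- is in substance exactly what the paper does, and is fine.

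However, the intermediate model-categorical sketch you offer for the ``if'' direction is not a valid argument and should be dropped rather than presented as an alternative proof. The claim that, because $\WK$ is cofibrant and $\gamma^{c}\colon\WK\to\K$ is a weak equivalence, the enriched functor $X_{\infty}$ ``factors, up to enriched homotopy, through $\K$ itself'' is essentially assuming the conclusion: $\K$, viewed as a constant cubical (or simplicial) category, is in general \emph{not} cofibrant in the Dwyer--Kan-type structure --- that is precisely why one resolves it by $\WK$ --- so no lifting or factorization property applies in the direction you need. Moreover, even if one had a factorization of $X_{\infty}$ through $\K$ up to enriched homotopy, one would still need to replace a functor merely homotopic to a strict one by an honest strict lift of $\tX$; that replacement is the actual content of the Boardman--Vogt/Vogt rectification theorem, not a formal consequence of $\gamma^{c}$ being a weak equivalence between nice objects. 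The one genuine check you correctly identify --- that the cubical composition and the ``potential composition'' coordinate of Notation \ref{ncompose} correspond under triangulation to Boardman--Vogt's structure --- is exactly what Lemma \ref{lwkresol} records, so the honest proof is the citation plus that comparison, nothing more.
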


\begin{remark}\label{rphhod}
In fact, for our purposes we do not have to assume that the
category $\M$ is cubically enriched, or even has a model category
structure: all we need is for $\M$ to have a suitable class of weak
equivalences $\bW$, from which we can construct an \ww{\SO}-category
\w{L(\M,\bW)} as in \cite[\S 4]{DKanF}, and then the corresponding
\ww{\CO}-category \w{\Sb L(\M,\bW)} by Proposition \ref{padjenr}.
Note that when $\M$ and $\bW$ are pointed, the construction of Dwyer
and Kan is naturally pointed, too. However, to avoid excessive verbiage
we shall assume for simplicity that $\M$ is a cubically enriched
model category.
\end{remark}

We do not actually need the full (usually large) category $\M$
(or \w[),]{\Sb L(\M,\bW)} since we can make use of the following:

\begin{defn}\label{dcx}
Given a diagram \w{\tX:\K\to\ho\M} for a model
category \w[,]{\M\in\CC} let \w{\CX} be the smallest
\ww{\CoK}-category inside $\M$ through which any lift of $\tX$ to
\w{X:\K\to\M} factors. This means that \w{\CX} is the
\ww{\CoK}-category having cubical mapping spaces
$$
\CX(X u,X v)~:=~\begin{cases}
\w{\M^{c}(X u,X v)} & \text{~if~~} u\prec v \text{~~in~}\OO:=\Obj\K\\
\emptyset & \text{~otherwise.}\end{cases}
$$
This is a sub-cubical category of $\M$.

For simplicity, we further reduce the mapping spaces of \w{\CX} so that they
consist only of those components of \w{\M^{c}(X u,X v)} which are
actually hit by $\tX$, so that \w[.]{\pi_{0}\CX=\K}
In particular, if $\K$ is the partially ordered set
\w[,]{\lra{\OO,\prec}} we may assume the mapping spaces of \w{\CX} are
connected (when they are not empty).
\end{defn}

\begin{mysubsection}{Pointed diagrams}\label{spointedd}
We want to understand the relationship between two possible ways to
describe the (final) obstruction to the existence of
an extension \w[:]{X_{\infty}} topologically and cohomologically.
Unfortunately, even though these obstructions can be defined for quite
general $\K$, they do not always coincide; this can be seen by
comparing the sets in which they take value.

However, we are in fact only interested in the cases where the
obstruction can naturally be thought of as the \emph{higher homotopy
operation} associated to the data \w[.]{\tX:\K\to\ho\TT} The usual
mantra says that such an operation is defined when ``a lower order
operation vanishes for two (or more) reasons''. Indeed, the example
of the usual Toda bracket shows that the problem cannot be stated
simply in terms of rectifying a homotopy-commutative diagram, since
any diagram indexed by a linear indexing category \w{\Gamma_{n}} as
above can always be rectified: what we want is to realize certain
null-homotopic maps by \emph{zero} maps (see \cite[\S 3.12]{BMarkH}).

This suggests that we restrict attention to \emph{pointed} diagrams,
and to the following special type of indexing category:
\end{mysubsection}

\begin{defn}\label{dlatt}
A \emph{lattice} is a finite quasi-lattice $\Gamma$ (\S \ref{dquasilat})
equipped with a \emph{(weakly) initial} object \w{\vi} and a
\emph{(weakly) final} object \w[,]{\vf} satisfying:
\begin{enumerate}
\renewcommand{\labelenumi}{(\alph{enumi})~}
\item There is a unique \ww{\phi_{\max}:\vi\to\vf}.
\item For each \w[,]{v\in\Obj\Gamma} there is at least one map
\w{\vi\to v} and at least one map \ww{v\to\vf}.
\end{enumerate}

A composable sequence of $n$ arrows in $\Gamma$ will be called an
$n$-\emph{chain}. The maximal occuring $n$ (necessarily for a chain
from \w{\vi} to \w[,]{\vf} factorizing \w[)]{\phi_{\max}} is called
the \emph{length} of $\Gamma$.
\end{defn}

\begin{remark}\label{rdim}
Note that if the length of $\Gamma$ is \w[,]{n+1} then \w{\WG} is
$n$-dimensional, in the sense that the cubical function complex
\w{\WG(\vi,\vf)} has dimension $n$, and \w{\dim(\WG(u,v)) < n}
for any other pair \w{u,v} in $\Gamma$.
\end{remark}

\begin{defn}\label{dnullcube}
We shall mainly be interested in the case when $\Gamma$ is pointed
(in which case necessarily \w[).]{\phi_{\max}=0} A
\emph{null sequence} in $\Gamma$ is then a composable sequence
$$
\fd~:=~\left(a_{n+1}\xra{f_{n+1}}a_{n}\xra{f_{n}}a_{n-1}\dotsc
a_{1} \xra{f_{1}} a_{0}\right)
$$
with \w[,]{\comp{\fd}=0} but no constituent \w{f_{i}} is zero.
It is called \emph{reduced} if all adjacent compositions
\w{f_{i+1}\cdot f_{i}} ($i=1,\dotsc,n$) are zero.
An $n$-cube \w{\II{n}{\fd}} in \w{\WG} indexed by a (reduced) null
sequence is called a (reduced) \emph{null cube}.
\end{defn}

As noted above, we want to concentrate on the problem of
replacing null-homotopic maps with zero maps, given a pointed diagram
\w{\tX:\Gamma\to\ho\M} which commutes up to pointed homotopy.
We shall therefore assume from now on that all other (non-zero) triangles
in the diagram commute strictly. However, since the non-zero maps in
$\Gamma$ do not form a sub-category, we shall need the following:

\begin{defn}\label{dpoint}
The \emph{unpointed version} \w{\UP{\K}} of a pointed category $\K$
is defined as follows: if \w{\K\cong F(\K)/I} for some set of
relations $I$ in the free category \w[,]{F(\K)} then the objects of
\w{\UP{\K}} are those of $\K$, except for the zero objects,
and \w[,]{\UP{\K}:=\F(K')/(I\cap\F(K'))} where \w{K'} is obtained from the
underlying graph $K$ of $\K$ by omitting all zero objects and maps. The
inclusion \w{K'\hra K} induces a functor \w[.]{\iota:\UP{\K}\to\K}

Essentially, \w{\UP{\K}} is the full subcategory of $\K$ omitting $0$
and all maps into or out of the zero object $0$. However, if the composite
\w{f\cdot g:a\to b}  is zero in $\K$ with \w[,]{f\neq 0\neq g} then we
add a new (non-zero) map  \w{\varphi:a\to b} in \w{\UP{\K}} (with
\w[),]{\iota(\varphi)=0} to serve as the composite in \w{\UP{\K}} of
$f$ and $g$.
\end{defn}

\begin{mysubsect}{Defining higher operations}\label{sdpho}

{F}rom now on we assume given a pointed lattice $\Gamma$ and a diagram
up-to-homotopy \w{\tX:\Gamma\to\ho\M} into a pointed cubically enriched
model category $\M$. Setting \w[,]{\Gp:=\UP{\Gamma}} we also assume that
the composite \w{\tX\circ\iota} lifts to a strict diagram \w[.]{X':\Gp\to\M}
For simplicity we also denote the factorization of \w{X'} though \w{\CX}
(\S \ref{dcx}) by \w[.]{X':\Gp\to\CX}

Our goal is to extend \w{X'} to a pointed diagram \w[.]{X:\Gamma\to\CX}
(Note that \w{X'} itself cannot be pointed in our sense, but it
still takes values in the pointed category $\M$).
Obviously, if \w{X'} does extend to such an $X$, every map
\w{\varphi\in\Gp} which factors through $0$ in $\Gamma$ must be
(weakly) null-homotopic in $\M$.  Thus, we additionally include this
restriction on the original data as part of our assumptions.

Our approach is to extend \w{X'} by induction over the skeleta of \w[,]{\WG}
where we actually need:
\end{mysubsect}

\begin{defn}\label{drelskel}
Given $\Gamma$ and \w{X'} as above, for each \w{k\geq 0} the
\emph{relative $k$-skeleton} for \w[,]{(\Gamma,\Gp)} denoted by
\w[,]{\skG{k}} is the pushout:
$$
\xymatrix@R=25pt{
\skc{k}\WG' \ar[r]^{\skc{k}\iota} \ar[d]^{\skc{k}\gamma^{c}} & \skc{k}\WG \ar[d] \\
\Gp \ar[r] & \skG{k}
}
$$
\noindent in \w{\CGC} (cf.\ Lemma \ref{llimit}), where
\w{\gamma^{c}:\WG\to\Gamma} is the augmentation of Proposition \ref{pwgresol}.

Note that the natural inclusions \w{\skc{k-1}\hra\skc{k}} induce maps
\w[.]{\skG{k-1}\to\skG{k}} A map of \ww{\CG}-categories \w{X'_{k}:\skG{k}\to\CX}
extending \w{X':\Gp\to\CX} is called $k$-\emph{allowable}.

In particular, if $\Gamma$ is a lattice of length \w[,]{n+1}
by Remark \ref{rdim} \w{\WGGp:=\skG{n}} is the pushout
$$
\xymatrix@R=25pt{
\WG' \ar[r]^{\iota} \ar[d]^{\gamma^{c}} & \WG \ar[d] \\
\Gp \ar[r] & \WGGp~.
}
$$
\end{defn}

\begin{remark}\label{rrelskel}
\w{X'} extends canonically to a pointed map \w[,]{X_{0}:\skc{0}\WG\to\CX}
because \w{\skc{0}\WG} is a free category, and the only new object is $0$.
Together with \w{\gamma^{c}} this determines a canonical
$0$-allowable extension \w[.]{X'_{0}:\skG{0}\to\CX}

If $\Gamma$ is a lattice of length \w[,]{n+1}
in order to rectify \w{X'} we want to extend \w{X'_{0}}
inductively over the relative skeleta \w{\skG{k}} to an
$n$-allowable map \w{X'_{\infty}:\WGGp\to\CX} \wh equivalently, a
map \w{X_{\infty}:\WG\to\CX} which agrees with the initial
\w[.]{X':\Gp\to\CX} Recall that because \w[,]{\dim\WG=\dim\WGGp=n}
\w{X'_{n}} is actually \w{X'_{\infty}} in the sense of Theorem
\ref{tbv}, so this yields a rectification of \w{X'} for suitable
$\M$ (such as \w[).]{\Ta}

We assumed in \S \ref{sdpho} that \w{X':\Gp\to\CX} takes every map
\w{\varphi\in\Gp} which factors through $0$ in $\Gamma$ to one which is
null-homotopic in $\M$. Therefore, by choosing null-homotopies for all
such maps we see that \w{X'_{0}} always extends \emph{non-canonically} to a
$1$-allowable \w[.]{X'_{1}:\skG{1}\to\CX}

However, in general there are obstructions to obtaining $k$-allowable
extensions for \w[.]{k\geq 2} These are complicated to define
``topologically'' (see \cite{BMarkH} and \cite{BChachP}). Fortunately,
in order to define the higher homotopy operation associated to
\w[,]{X'} we only need to consider the \emph{last} obstruction.

That is, we assume we have already produced an \ww{(n-1)}-allowable extension
\w[,]{X'_{n-1}:\skG{n-1}\to\CX} and want to extend it to \w[.]{X'_{n}}
It may be possible to do so in different ways. In order to define the
set \w{\llrr{X'}} of ``last obstructions'',  we need the following:
\end{remark}

\begin{lemma}\label{llinex}
Assume that \w{\Gamma=\Gamma_{n+1}} is a composable \ww{(n+1)}-chain \w{\fd}
(\S \ref{eglatt}) and that the $i$-th adjacent composition
\w{f_{i}\cdot f_{i+1}\neq 0} in $\Gamma$, and let
$$
\fd':=(f_{1},\dotsc,f_{i-1},f_{i}\cdot f_{i+1},f_{i+2},\dotsc f_{n})~.
$$
Let \w{\iota:\II{n}{\fd'}\hra\II{n+1}{\fd}} be the inclusion of the
$i$-th zero face. Let $\tG$ be the linear lattice corresponding to
\w[.]{\fd'} Then for any \w[,]{X:\Gp\to\CX} the inclusion
\w{\iota:\tG\hra\Gamma} induces a one-to-one correspondence between
the set of extensions of \w{X'_{0}:\skG{0}\to\CX} to \w{\WG} and the
extensions of \w{\tX'_{0}:\sktG{0}\to\CX} to \w[.]{W\tG}
\end{lemma}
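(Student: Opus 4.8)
The plan is to read off the combinatorial shape of $\WG$ for the linear lattice $\Gamma=\Gamma_{n+1}$, realize $W\iota:W\tG\to\WG$ as a level-wise family of cofibrations of cubical sets, and then show that restriction along it is both surjective and injective on extensions, using the hypothesis $f_i\cdot f_{i+1}\neq 0$ to force the cells of $\WG$ lying off that family to carry no free data. I would run the whole argument by induction on the length $n+1$ of $\Gamma$.

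First I would make $\WG$ explicit. Since $\Gamma$ is linear, each mapping object $\WG(u,v)$ (for $u\prec v$ in $\OO$) is itself the $W$-construction of the linear sub-chain between $u$ and $v$, hence a standard cube whose $0$-faces perform adjacent compositions and whose $1$-faces are $\otimes$-products of lower cubes, as in Definition~\ref{dwconst}; in particular $\WG(\vi,\vf)$ is the top cube $\II{n+1}{\fd}$. The functor $\iota:\tG\hra\Gamma$ deletes the single object, say $\mathbf i$, sitting between $f_i$ and $f_{i+1}$; applying the $W$-construction, $W\iota:W\tG\to\WG$ is, on each mapping object, either an isomorphism or the inclusion of the $0$-face that composes $f_i$ with $f_{i+1}$, and on the top object it is exactly the inclusion $\iota:\II{n}{\fd'}\hra\II{n+1}{\fd}$ of the statement. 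Since $\sktG{0}$ maps to $\skG{0}$ compatibly with $\iota$ (Definition~\ref{drelskel} together with Proposition~\ref{pwgresol}), precomposition with $W\iota$ carries an extension of $X'_0$ over $\WG$ to an extension of $\tX'_{0}$ over $W\tG$; this is the correspondence to be shown bijective.

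The heart of the matter is that the composite $f_i\cdot f_{i+1}$, being non-zero, is a genuine arrow of $\Gp=\UP{\Gamma}$ on which the strict diagram $X$ is already defined, so that every cube of $\WG$ meeting the $i$-th composition coordinate is already rigidified by $X$. Concretely: the mapping objects $\WG(u,\mathbf i)$ and $\WG(\mathbf i,v)$ into and out of $\mathbf i$, and, whenever an interval has $\mathbf i$ in its interior, the part of $\WG(u,v)$ lying off the $\iota$-image, all lie in the sub-category $\WG'=W\Gp$, so any extension of $X'_0$ is determined on them by $X\circ\gamma^c$. The standard deformation retraction of a cube onto its $i$-th $0$-face is compatible with the three kinds of composition in $\WG$ (Notation~\ref{ncompose}) and with this rigidification --- because on each $\otimes$-product cube it restricts to an identity or to a retraction of the same shape --- so it turns any extension $Y$ of $\tX'_{0}$ over $W\tG$ into a \emph{unique} extension $\widehat Y$ of $X'_0$ over $\WG$ with $\widehat Y\circ W\iota=Y$: one fills each cell of $\WG$ off the $\iota$-image by retracting it onto a cell already pinned down by $X$ or by $Y$. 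Carrying this out over the intervals $\seg{u,v}$ in increasing order of cardinality (equivalently, by induction on cube dimension, the shorter $W$-constructions through $\mathbf i$ being handled by the inductive hypothesis on length) gives surjectivity, and applying the same collapse to the difference of two extensions that agree on $W\tG$ gives injectivity. Hence $\iota$ induces the asserted one-to-one correspondence.

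I expect the real obstacle to be exactly the compatibility claim: checking that the coordinate collapse can be performed \emph{simultaneously} on all the mapping objects so as to define a genuine map of $\CG$-categories, not merely a level-wise family, and so as to respect the relative-skeleton filtration $\skG{0}\to\skG{1}\to\cdots$ of Definition~\ref{drelskel}. This is where $f_i\cdot f_{i+1}\neq 0$ is essential --- it guarantees that the retraction contracts no genuine higher homotopy, only a canonically constant one forced by strictness of $X$ --- and it makes the inductive bookkeeping in $n$, treating the arrows and composites around $\mathbf i$ first, the natural framework for the proof.
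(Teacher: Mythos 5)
Your proposal is correct and takes essentially the same route as the paper: its (one-sentence) proof likewise observes that, since \w{f_{i}\cdot f_{i+1}\neq 0} is a map of \w{\Gp} on which \w{X'} is strictly defined, any allowable extension must be constant along the $i$-th coordinate direction of \w[,]{\II{n}{\fd}} so that the projection \w{\rho:\II{n}{\fd}\to\II{n-1}{\fd'}} collapsing that coordinate furnishes the inverse to restriction along $\iota$. Your interval-by-interval induction and the compatibility check for the coordinate collapse simply spell out what the paper compresses into that remark; just note that your blanket claim that \emph{every} cell off the image of \w{W\tG} lies in \w{\WG'} and is pinned by \w{X\circ\gamma^{c}} is stronger than what the argument needs (or than what the paper asserts), which is only degeneracy in the single direction corresponding to the composition at the deleted object.
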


\begin{proof}
The $i$-th dimension of \w{\II{n}{\fd}} corresponds to the $i$-th
adjacent composition \w{f_{i}\cdot f_{i+1}} in the \ $(n+1)$-chain
\w[,]{\fd} and if this composite is not zero, then \w[,]{X'_{n}}
being allowable, is constant along this dimension. Thus the projection
\w{\rho:\II{n}{\fd}\to \II{n-1}{\fd'}} induces the inverse to
\w[.]{\iota^{\ast}}
\end{proof}

\begin{prop}\label{pphho}
Let $\Gamma$ be a lattice of length \w{n+1} and \w{X':\Gp\to\CX} a diagram.
Let \w{\JG} be the set of length \w{n+1} reduced null sequences of $\Gamma$
(Definition \ref{dnullcube}). There is a natural correspondence between
\ww{(n-1)}-allowable extensions \w{X'_{n-1}:\skG{n-1}\to\CX} of \w{X'}
and maps \w[,]{F_{X'_{n-1}}:\bigvee_{\fd\in\JG}~\Sigma^{n-1}X'(\vi)\to X'(\vf)}
such that \w{F_{X'_{n-1}}} is null-homotopic if and only if \w{X'_{n-1}}
extends to \w[.]{\skG{n}}
\end{prop}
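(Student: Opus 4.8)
The plan is to analyze the obstruction to extending $X'_{n-1}$ over the relative $n$-skeleton $\skG{n}$ by understanding exactly which $n$-cubes of $\WG$ are not already ``filled'' by the data of $X'_{n-1}$ together with the strictness assumptions. First I would observe that $\skG{n}$ is obtained from $\skG{n-1}$ by attaching the non-degenerate $n$-cubes $\II{n}{\fd}$ of $\WG$ along their boundaries; since $\Gamma$ has length $n+1$, these top-dimensional cubes are exactly the ones indexed by composable $(n+1)$-chains $\fd$. By the pointedness hypotheses (all non-zero triangles commute strictly, so $X'$ is already defined on $\Gp = \UP{\Gamma}$) together with Lemma \ref{llinex}, any $n$-cube indexed by a chain with a non-zero adjacent composition contributes nothing new: it is forced by lower data (the map is constant along that direction, so the cube is already determined by its relevant zero-face, which lies in $\skG{n-1}$). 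Hence the only cubes that matter are the \emph{reduced} null cubes, indexed precisely by $\JG$.

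Next I would examine the boundary $\partial\II{n}{\fd}$ for $\fd\in\JG$. Its $(n-1)$-faces come in two types (Definition \ref{dwconst}): the zero-faces, given by carrying out an internal composition in $\Gamma$ — but for a reduced null sequence every such composition is zero, so the corresponding face collapses to the basepoint, i.e.\ $X'_{n-1}$ sends it to the null map through $X'(\vf)$ — and the one-faces, given by the cubical ($\otimes$-)composition, which by construction of $\skG{n-1}$ are already filled by the $(n-1)$-allowable $X'_{n-1}$ and, being $\otimes$-products of lower cubes one of whose factors is a null cube of strictly smaller dimension, are again null-homotopic by the inductive structure. Thus $X'_{n-1}$ restricted to $\partial\II{n}{\fd}$ factors (up to the identifications above) through the basepoint on all faces \emph{except} that it records a genuine map on the part of the boundary coming from the single non-trivial ``potential composition'' direction; assembling these, $X'_{n-1}\rest{\partial\II{n}{\fd}}$ determines, after collapsing the nullified faces, a map out of $\partial I^{n}/(\text{all but one face}) \simeq S^{n-1}$, landing in $\CX(X'(\vi),X'(\vf))$, whose underlying component is connected. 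Using that $\CX$ has connected mapping spaces and $S^{n-1}\wedge X'(\vi)$ corepresents the relevant homotopy, this is the same as an element of $[\Sigma^{n-1}X'(\vi), X'(\vf)]$; taking the wedge over all $\fd\in\JG$ gives the desired $F_{X'_{n-1}}$.

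Finally I would verify the naturality and the ``null-homotopic iff extends'' clause. Extending $X'_{n-1}$ over $\skG{n}$ means, cube by cube, choosing a filling of each $\II{n}{\fd}$ compatible with its prescribed boundary; since each cube is contractible and $I^{n}$ is a cofibrant object, such a filling exists precisely when the boundary map $S^{n-1}\to\CX(X'(\vi),X'(\vf))$ extends over the cone, i.e.\ is null-homotopic — and the fillings for different $\fd$ are independent because distinct top cubes of $\WG$ meet only along lower skeleta already fixed by $X'_{n-1}$. Hence $X'_{n-1}$ extends to $\skG{n}$ iff each wedge summand of $F_{X'_{n-1}}$ is null, iff $F_{X'_{n-1}}$ is null. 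Naturality in $X'$ (with respect to the pointed correspondence $\Phi$ of the main theorem) follows since every construction above — the relative skeleton, the collapsing of nullified faces, the corepresentability identification — is functorial. The main obstacle I expect is bookkeeping: carefully tracking which faces of $\partial\II{n}{\fd}$ collapse to the basepoint under a \emph{reduced} (as opposed to merely null) sequence, and checking that the $\otimes$-product one-faces are genuinely controlled by $X'_{n-1}$ rather than requiring new choices — this is where Remark \ref{rskel} on the non-monoidality of $\skc{k}$, and the explicit description of $\skc{k}\A$ for cofibrant $\A$, have to be used with care. Once that combinatorial identification is pinned down, the homotopy-theoretic content reduces to the standard cofibration–lifting argument against a connected (hence, after suspension, corepresentable) target.
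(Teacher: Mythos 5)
Your overall strategy is the same as the paper's: use Lemma \ref{llinex} to discard the top cubes having a non-zero adjacent composition, so that only the cubes $\II{n}{\fd}$ with $\fd\in\JG$ matter; analyze the boundary of each such cube; read off from $X'_{n-1}$ a class in $[\Sigma^{n-1}X'(\vi),X'(\vf)]$ per cube; and get the ``extends iff null'' clause from the cone-filling argument, the cubes being attached independently. However, your boundary analysis goes wrong at a concrete point. You claim the $1$-faces (the $\otimes$-decomposable facets) are ``null-homotopic by the inductive structure'' and that one obtains a map out of $\partial I^{n}/(\text{all but one face})$, the data sitting in a ``single non-trivial potential composition direction''. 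That is not the right picture: for a reduced null sequence, the $n$ facets adjacent to the \emph{minimal} vertex (the $0$-faces, obtained by performing an adjacent composition $f_{i}\cdot f_{i+1}=0$) are sent to the zero map because $X'_{n-1}$ is allowable, but \emph{none} of the $n$ $1$-faces is collapsed \wh they carry the cubical composites of the previously chosen lower-dimensional homotopies, and they are precisely where the value of the operation is recorded. If they were null as well, every such class would vanish identically and the proposition would be empty. There is also no distinguished direction: each of the $n$ directions of $\II{n}{\fd}$ corresponds to one adjacent composition. The correct formulation (as in the paper) is that the restriction of $X'_{n-1}$ to $\partial\II{n}{\fd}$ is a map $X'(\vi)\otimes\partial I^{n}\to X'(\vf)$ killing $X'(\vi)\otimes\II{0}{\phi_{\max}}$ and $\ast\otimes\partial I^{n}$, hence inducing $X'(\vi)\wedge S^{n-1}\to X'(\vf)$; the union of the $0$-faces is contractible, so no further collapsing is needed or wanted.

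A second, smaller gap: to assemble the individual classes into a single map on the \emph{wedge} $\bigvee_{\fd\in\JG}\Sigma^{n-1}X'(\vi)$ it is not enough that distinct top cubes ``meet only along lower skeleta already fixed''; you need that on their overlaps the map is zero, for otherwise the union of the boundaries maps to $X'(\vf)$ through a more complicated quotient than a wedge of suspensions. This holds because two distinct maximal cubes have distinct maximal vertices $\II{0}{\otimes\fd}$, so they can only meet in facets adjacent to the minimal vertex, where $X'_{n-1}$ vanishes by the $0$-face analysis above \wh this is exactly the step the paper's proof makes explicit. With these two corrections your argument coincides with the paper's.
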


\begin{proof}
In order to extend \w{X'_{n-1}:\skG{n-1}\to\CX} to \w[,]{\skG{n}} we must
choose extensions to the $n$-cubes of \w[.]{\WG} These occur only in the full
mapping complex \w[,]{\WG(\vi,\vf)} and are in one-to-one correspondence
with those decompositions
$$
\fd~=~\left(\vi=a_{n+1}\xra{f_{n+1}}a_{n}\xra{f_{n}}a_{n-1}\dotsc
a_{1}\xra{f_{1}}a_{0}=\vf\right)
$$
of \w{\phi_{\max}:\vi\to\vf} which are of maximal length \w[.]{n+1}
Note that the minimal vertex of \w{\II{n}{\fd}} is indexed by
\w[;]{\phi_{\max}=0} the maximal vertex is \w{\II{0}{\otimes \fd}}
(Definition \ref{dvertices}).

By Lemma \ref{llinex} we need only consider those maximal
decompositions \w{\fd} for which every adjacent composition
\w[.]{f_{i}\cdot f_{i+1}=0} In this case, we may assume that any facet
\w{\II{n-1}{\fd'}} of \w{\II{n}{\fd}} which touches the vertex labeled
by \w{\phi_{\max}=0} has at least one factor of \w{\fd'} equal to $0$
(in $\Gamma$), so  \w[.]{X'_{n-1}\rest{\II{n-1}{\fd'}}=0} Thus
\w{X'_{n-1}\rest{\II{n}{\fd}}} is given by a map in $\M$
\w{F'_{(X'_{n-1},\II{n}{\fd})}:X'(\vi)\otimes \partial I^{n}\to X'(\vf)}
which sends \w{X'(\vi)\otimes \II{0}{\phi_{\max}}} and
\w{\ast_{X(\vi)}\otimes I^{n}} to \w[,]{\ast_{X(\vf)}} so it induces
\w[.]{\tilde{F}_{(X'_{n-1},\II{n}{\fd})}:X'(\vi)\wedge S^{n-1}\to X'(\vf)}

Note further that any two such $n$-cubes \w{\II{n}{\fd}} and
\w{\II{n}{\gd}} have distinct maximal vertices \w{\II{0}{\fd}} and
\w[,]{\II{0}{\gd}} so they can only meet in facets adjacent to the
minimal vertex, where \w{\tilde{H}} vanishes. Thus altogether
\w{X'_{n-1}} is described by a map
\begin{myeq}\label{eqrednullcube}
F_{X'_{n-1}}:\bigvee_{\fd\in\JG}~\Sigma^{n-1}X'(\vi)~\to~X'(\vf)~,
\end{myeq}
\noindent where \w{\JG} is the set of length \w{n+1} reduced null
sequences of $\Gamma$. Clearly, \w{F_{X'_{n-1}}} is null-homotopic if and
only if \w{X'_{n-1}} extends to all of \w[,]{\WG}
since \w{\WG(\vi,\vf)} is
\w[,]{\map(C(\bigvee_{\fd\in\JG}~\Sigma^{n-1}X'(\vi)),\,X'(\vf))} up to
homotopy, where \w{CK} is the cone on $K$.
\end{proof}

\begin{defn}\label{dphho}
The $n$-th order \emph{pointed higher homotopy operation} \w{\llrr{X'}}
associated to \w{X':\Gp\to\CX} as above is defined to be the subset:
\begin{myeq}\label{eqphho}
\w{\llrr{X'}}~\subseteq~
\left[\bigvee_{\fd\in\JG}~\Sigma^{n-1}X'(\vi),~X'(\vf)\right]_{\ho\M}
\end{myeq}
\noindent consisting of all maps \w{F_{X'_{n-1}}} as above, for all possible
choices of \ww{(n-1)}-allowable extensions \w[,]{X'_{n-1}} of \w[.]{X'}
We say the operation \emph{vanishes} if this set contains the zero class.
\end{defn}

%
%
\sect{Cohomology and rectification}
\label{ccoh}

The approach of Dwyer, Kan, and Smith to realizing a homotopy-commutative
diagram \w{\tX:\Gamma\to\ho\M} is also based on Theorem \ref{tbv},
which says that $\tX$ can be rectified if and only if it extends to
\w[.]{\WG} We do not actually need the full force of their theory, which is
why we can work in an arbitrary pointed model category $\M$, rather than just
\w{\Ta} (see also Remark \ref{rphhod}).

Essentially, they define the (possibly empty) moduli space \w{\hc{}\tX}
to be the nerve of the category of all possible rectifications of $\tX$
(cf.\ \cite[\S 2.2]{DKSmH}), and \w{\hc{\infty}\tX} is the space of all
$\infty$-homotopy commutative lifts of $\tX$ in (the simplicial version of)
\w{\map_{\CC}(\WG,\M)=\map_{\CGC}(\WG,\CX)} (\S \ref{dcx}). They then show that
\w{\hc{}\tX} is (weakly) homotopy equivalent to \w{\hc{\infty}\tX}
(see \cite[Theorem 2.4]{DKSmH}).
Thus the realization problem is equivalent to finding suitable elements in
\w[.]{\map_{\CGC}(\WG,\CX)} Dwyer, Kan, and Smith also consider a relative
version, where $\tX$ has already been rectified to \w{Y:\Theta\to\M} for some
sub-category \w{\Theta\subseteq\Gamma} (see \cite[\S 4]{DKSmH}).
We shall in fact need only the case \w{\Theta=\Gp} and \w[,]{Y=X'} so we want
an element in \w{\map_{\CGC}(\WGGp,\CX)} (see \S \ref{drelskel}).

\begin{mysubsection}{The tower}\label{sdkstower}
If $\Gamma$ is a quasi-lattice, \w{\CGC} has a semi-spherical model
category structure (see \S \ref{smccgc} and \S \ref{ssscgc}). Therefore, the
Postnikov tower \w{\{P^{m}\CX\}_{m=0}^{\infty}} of the \ww{\CG}-category
\w{\CX} allows us to define \w{\hc{m}\tX:=\map_{\CGC}(\WGGp,P^{m-1}\CX)} for
\w[.]{m\geq 1} Note that \w{P^{0}\CX} is homotopically trivial \wh that is, each
component of each mapping space \w{(P^{0}\CX)(u,v)} is contractible \wh so
\w{\hc{1}\tX} is, too. Moreover, \w{\tX:\Gamma\to\ho\M}
(or \w[)]{X':\Gp\to\CX} determines a canonical ``tautological'' component
of \w{\hc{1}\tX} \wh namely, the component of the map
\w[,]{\widetilde{X_{1}}:\WGGp\to P^{0}\CX} corresponding to the canonical
$0$-allowable extension \w{X'_{0}:\skG{0}\to\CX} of \S \ref{rrelskel}.
\end{mysubsection}

Because \w{\CX} is weakly equivalent to the limit of its Postnikov tower
(\S \ref{sfstr}(b)), the space \w{\hc{\infty}\tX} is the homotopy limit
of the tower:
%
\begin{myeq}\label{eqthirteen}
\hc{\infty}\tX \to \dotsc \to \hc{n}\tX \to \hc{n-1}\tX \dotsc \to \hc{1}\tX~.
\end{myeq}

In general, there are \w{\lim^{1}} problems in determining the components of
\w{\hc{\infty}\tX} (see \cite[\S 4.8]{DKSmO}), but these will not be relevant
to us here, because of the following:

%
%
\begin{lemma}\label{cfinite}
If $\Gamma$ has length \w[,]{n+1} the tower \wref{eqthirteen} is constant
from \w{\hc{n-1}\tX} up.
\end{lemma}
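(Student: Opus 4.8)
The plan is to show that for $m \geq n-1$ the map $\hc{m+1}\tX \to \hc{m}\tX$ is a weak equivalence, which then forces the tower \wref{eqthirteen} to be essentially constant from \w{\hc{n-1}\tX} upward. Since \w{\hc{m+1}\tX=\map_{\CGC}(\WGGp,P^{m}\CX)} and \w{\hc{m}\tX=\map_{\CGC}(\WGGp,P^{m-1}\CX)}, and \w{\WGGp} is a cofibrant \ww{\CG}-category by Proposition \ref{pwgresol} (together with Lemma \ref{llimit} and the pushout defining \w{\WGGp} in Definition \ref{drelskel}), it suffices to check that the Postnikov map \w{p\q{m}:P^{m}\CX\to P^{m-1}\CX} becomes a weak equivalence after applying \w{\map_{\CGC}(\WGGp,-)}, i.e.\ that it is ``seen'' as a weak equivalence by all cubical mapping complexes of dimension at most $n$.

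First I would use Remark \ref{rdim}: since $\Gamma$ is a lattice of length \w[,]{n+1} the \ww{\CG}-category \w{\WG} (hence \w[,]{\WGGp} built from it by a pushout along \w{\skc{0}}-level data) is $n$-\emph{dimensional} — the mapping complex \w{\WGGp(\vi,\vf)} has dimension $n$, and \w{\WGGp(u,v)} has strictly smaller dimension for every other pair. Next, by the semi-spherical structure on \w{\CGC} (\S \ref{ssscgc}(a)), the Postnikov tower of \w{\CX} is obtained by applying the spherical-category functors \w{P_{m}} entrywise to each \w[,]{\CX^{c}(u,v)} so that \w{p\q{m}\uv:P_{m}\CX^{c}(u,v)\to P_{m-1}\CX^{c}(u,v)} is a map of cubical sets inducing \w{\pi_{k}}-isomorphisms for \w{k\leq m-1} and having trivial homotopy above (compare \S \ref{sfstr}(b)). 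The key input is then the elementary fact, analogous to the simplicial case, that a map of fibrant cubical sets which is an isomorphism on \w{\pi_{k}} for \w{k<m} (and with the target having trivial higher homotopy, or equivalently with the fibre \w{(m-1)}-connected) induces a weak equivalence on \w{\mapc(K,-)} for any cubical set \w{K} of dimension \w[:]{\leq m} obstructions to lifting and to uniqueness of lifts along \w{p\q{m}} live in \w{\pi_{k}} of the fibre for \w[,]{k\leq \dim K = m}, which vanish. Applying this entrywise — with \w{m} replaced by \w{n} and using that each \w{\WGGp(u,v)} has dimension \w{\leq n}, with dimension \w{<n} unless \w{(u,v)=(\vi,\vf)} — shows \w{\map_{\CGC}(\WGGp,p\q{m})} is a weak equivalence for all \w[,]{m\geq n-1} hence that \wref{eqthirteen} is constant from \w{\hc{n-1}\tX} up, as \w[.]{\hc{m+1}\tX=\map_{\CGC}(\WGGp,P^{m}\CX)}

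The step I expect to be the main obstacle is making the ``entrywise'' reduction honest: \w{\map_{\CGC}(\WGGp,-)} is not simply a product of the \w{\mapc(\WGGp(u,v),-)} over pairs \w[,]{(u,v)} because a \ww{\CG}-functor out of \w{\WGGp} must respect the $\otimes$-composition maps, so the mapping complex is really an equalizer (or a limit over the relevant diagram of cubes \w{\II{n}{\fd}} and their faces, as in the analysis in the proof of Proposition \ref{pphho}). To handle this I would filter \w{\WGGp} by its relative skeleta \w{\skG{k}} (Definition \ref{drelskel}) and induct on \w[:]{k} at each stage the relevant relative cells are \w{\otimes}-products of cubes of dimension \w[,]{\leq k} and a \w{p\q{m}\uv} with \w{m\geq n-1\geq k-1} is already an equivalence on mapping complexes out of such cells by the connectivity count above; a standard gluing/Mayer--Vietoris argument over the pushouts defining the \w{\skG{k}} then propagates the equivalence up the skeletal filtration to \w[.]{\WGGp=\skG{n}} The only place the bound \w{n+1} on the length is genuinely used is that the filtration terminates at \w{k=n}, so that \w{p\q{m}} for \w{m\geq n-1} (equivalently, the fibre being \w[-connected]{(m-1)\geq(n-2)}) is enough at every stage.
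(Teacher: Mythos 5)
Your strategy — compare the connectivity of the Postnikov fibrations $p^{(m)}\colon P^{m}\CX\to P^{m-1}\CX$ with the dimension $n$ of $\WGGp$ — is reasonable in spirit, but the range you claim is wrong, and fatally so at its bottom end. For $m=n-1$ the fibre of $p^{(n-1)}$ is a (twisted) Eilenberg--Mac\,Lane object in degree $n-1$, while $\WGGp$ has relative cells of dimension up to $n$; extending a lift over those top $n$-cells meets obstructions in $\pi_{n-1}$ of the fibre, giving a class in $H^{n}(\Gamma,\pi_{n-1}\CX)$ --- exactly the group in which the final Dwyer--Kan--Smith obstruction $[h(X_{n-1})]$ lives. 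If $\hc{n}\tX\to\hc{n-1}\tX$ were a weak equivalence, every $\widetilde{X_{n-1}}$ would lift, so $\llrr{X'}$ would always vanish and Theorem \ref{tone} would be vacuous (nontrivial Toda brackets show this cannot be). The slip is in your count ``obstructions \dots live in $\pi_{k}$ of the fibre for $k\leq\dim K=m$'': the dimension of $\WGGp$ is $n$, not $m$, and when $m=n-1$ the top obstruction degree is $\pi_{n-1}=\pi_{m}$ of the fibre, which is $\pi_{m}\CX$ and need not vanish. The threshold at which existence of lifts becomes automatic is $m\geq n$, not $m\geq n-1$.

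Even in the corrected range $m\geq n$, your claim that $\map_{\CGC}(\WGGp,p^{(m)})$ is a \emph{weak equivalence} overshoots: the homotopy fibre of $\map_{\CGC}(\WGGp,P^{m}\CX)\to\map_{\CGC}(\WGGp,P^{m-1}\CX)$ over a lift is a twisted mapping space into an Eilenberg--Mac\,Lane object of degree $m$, whose homotopy groups $\pi_{i}$ involve $H^{m-i}(\Gamma,\pi_{m}\CX)$; these need not vanish once $m-i\leq n$, so the tower maps become highly connected but are not equivalences of the full mapping spaces. The paper sidesteps all of this bookkeeping: assuming $\CX$ fibrant, it models $P^{m-1}\CX$ by the coskeleton $\cskc{m}\CX$ (Remark \ref{rcsk}) and uses the adjunction with the naive skeleton, together with $\skc{m}\WGGp=\WGGp$ for $m\geq n$ (Remarks \ref{rskel} and \ref{rdim}), to obtain a natural one-to-one correspondence between lifts $\widetilde{X_{m}}\colon\WGGp\to P^{m-1}\CX$ of $\widetilde{X_{1}}$ and genuine maps $\WGGp\to\CX$, i.e.\ $n$-allowable extensions. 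That adjointness statement is the precise sense in which the tower is constant above $\hc{n-1}\tX$, and it is all that is used later (Proposition \ref{pdksm} with $m=n-1$ then yields the last obstruction). Your skeletal-induction/gluing reduction is a sensible way to handle the $\otimes$-composition constraints, but it should be run at the level of lifts via this skeleton--coskeleton adjunction, not as a claim of weak equivalence starting at $m=n-1$.
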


\begin{proof}
We may assume that \w{\CX} is fibrant (e.g., if \w{\tX v} is a cubical Kan
complex for each \w[).]{v\in\OO} Then \w{\skG{n}=\WG} by Remark \ref{rdim},
where in this case we are using the naive $n$-skeleton (see Remark \ref{rskel})
which is left adjoint to the $n$-coskeleton functor. By Remark \ref{rcsk},
we may use the latter for \w[.]{P^{n-1}\CX} Thus the choices of $n$-allowable
extensions \w{X'_{n}:\skG{n}=\WG\to\CX} of $\tX$ are in natural one-to-one
correspondence with lifts \w{\widetilde{X_{n}}:\WGGp\to P^{n-1}\CX}
of \w[.]{\widetilde{X_{1}}}
\end{proof}

\begin{mysubsect}{The obstruction theory}\label{sobstthy}

In view of the above discussion, the realization problem for
\w{\tilde{X}:\Gamma\to\ho\M} \wh and in particular, the pointed version for
\w{X':\Gp\to\M} (see \S \ref{sdpho}) \wh can be solved if one can successively
lift the element \w{\widetilde{X_{1}}\in\hc{1}\tX} through the tower
\wref[.]{eqthirteen} In fact, we do not really need the (simplicial or cubical)
mapping spaces \w{\hc{m}\tX:=\map_{\CGC}(\WGGp,P^{m-1}\CX)} at all \wh
we simply need to lift the maps \w{\widetilde{X_{m}}:\WGGp\to P^{m-1}\CX}
in the Postnikov tower for \w[.]{\CX}

Let \w{k_{m-1}:\CX\to E^{G}(\pi_{m}\CX,m+1)} be the \ww{(m-1)}-st $k$-invariant
for \w[,]{\CX} where \w{G:=\hpi{\CX}} (see \S \ref{ssscgc} \textit{ff.}).
Given a lifting \w[,]{\widetilde{X_{m}}} composing it with \w{k_{m-1}}
yields a map \w[:]{h(X_{m}):\WGGp\to E^{G}(\pi_{m}\CX,m+1)}
$$
\xymatrix@R=25pt{
\WGGp \ar@/^2pc/[drr]^{\widetilde{X_{m}}}
\ar@{.>}[dr]^{\widetilde{X_{m+1}}} \ar@/_2pc/[ddr]_{p} \\
& {P^{m}\CX} \ar[r] \ar[d] & {P^{m-1}\CX} \ar[d]^{k_{m-1}} \\
& BG \ar[r]^-{s} & {E^{G}(\pi_{m}\CX,m+1)} \ar@/^1pc/[l]^{\text{proj}}
}
$$

To identify \w{h(X_{m})} as an element in the appropriate cohomology group
(Definition \ref{dcoh}), note that in this case the twisting map
\w{p:\WGGp\to BG} factors through
\w[,]{\hpi{X_{n}}:\hpi{\WGGp}\to\hpi{P_{m-1}\CX}=\hpi{\CX}=G}
and by Proposition \ref{pwgresol}, the fundamental groupoid
\w{\hpi{\WGGp}=\Gamma} is discrete. Thus \w{[h(X_{m})]} takes value
in \w[,]{H^{m+1}_{\Gamma}(\WGGp;\pi_{m}\CX)} which we abbreviate
to \w[.]{H^{m+1}(\Gamma;\pi_{m}\CX)}

The lifting property for a fibration sequence (over \w[)]{BG} then yields:
\end{mysubsect}

%
%
\begin{prop}[\protect{\cite[Prop.\ 3.6]{DKSmH}}]\label{pdksm}
The map \w{\widetilde{X_{m}}} lifts to \w{\widetilde{X_{m+1}}} in
\w{\hc{m+1}\tX} if and only if \w{[h(X_{m})]} vanishes in
\w[.]{H^{m+1}(\Gamma;\pi_{m}\CX)}
\end{prop}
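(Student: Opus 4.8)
The plan is to deduce the statement from two ingredients already available: the $k$-invariant square \eqref{eqseven} for $\CX$, supplied by the semi-spherical structure on $\CGC$ of \S\ref{ssscgc}, which is a \emph{homotopy pullback} in $\CGC/BG$ with $G=\hpi\CX$; and the standard obstruction-theoretic lifting lemma for the fibration obtained by pulling such a principal fibration back along a classifying map. In outline: lifting $\widetilde{X_{m}}$ along $p\q{m}:P_{m}\CX\to P_{m-1}\CX$ is equivalent, by the homotopy pullback property, to a homotopy over $BG$ between $k_{m-1}\circ\widetilde{X_{m}}$ and a canonical ``trivial'' comparison map; and such a homotopy exists iff the resulting difference class, which is $[h(X_{m})]$, vanishes in $H^{m+1}_{\Gamma}(\WGGp;\pi_{m}\CX)$.

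First I would fix the homotopical set-up. Replace $\CX$ by a weakly equivalent fibrant $\CG$-category, so that each $P_{m-1}\CX$ is fibrant and $p\q{m}:P_{m}\CX\to P_{m-1}\CX$ is a fibration; note that $\WGGp$ is cofibrant in $\CGC$, since $\WG$ is cofibrant by Proposition \ref{pwgresol} and $\WGGp$ is built from it by the pushout of Definition \ref{drelskel}. One may also take the homotopy pullback in \eqref{eqseven} to be the strict pullback of $k_{m-1}:P_{m-1}\CX\to E^{G}(\pi_{m}\CX,m+1)$ against a fibrant replacement of the section $s:BG\to E^{G}(\pi_{m}\CX,m+1)$, so that homotopy classes of maps out of $\WGGp$ into these objects are computed naively, and, by Remark \ref{rcsk}, the $\cskc{m}$-model may be used for $P_{m-1}\CX$.

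Next I would carry out the reduction. Pulling $p\q{m}$ back along $k_{m-1}\circ\widetilde{X_{m}}:\WGGp\to E^{G}(\pi_{m}\CX,m+1)$, the homotopy pullback property of \eqref{eqseven} identifies lifts $\widetilde{X_{m+1}}:\WGGp\to P_{m}\CX$ of $\widetilde{X_{m}}$ with homotopies over $BG$ from $k_{m-1}\circ\widetilde{X_{m}}$ to the composite $\WGGp\xra{p}BG\xra{s}E^{G}(\pi_{m}\CX,m+1)$, where $p$ is the twisting map; as observed just before the statement, $p$ factors through the discrete groupoid $\hpi\WGGp=\Gamma$, so everything takes place in the slice over $BG$ and $[\WGGp,E^{G}(\pi_{m}\CX,m+1)]_{\CGC/BG}$ is precisely $H^{m+1}_{\Gamma}(\WGGp;\pi_{m}\CX)$ of Definition \ref{dcoh}. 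By construction $[h(X_{m})]$ is the class of $k_{m-1}\circ\widetilde{X_{m}}$ in this group, while $s\circ p$ represents its zero element; since $E^{G}(\pi_{m}\CX,m+1)$ carries a homotopy-abelian-group structure over $BG$ for which two maps into it are homotopic over $BG$ exactly when their difference class is null, we get $k_{m-1}\circ\widetilde{X_{m}}\simeq_{BG}s\circ p$ iff $[h(X_{m})]=0$, which together with the preceding identification proves the Proposition.

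The main obstacle I expect is bookkeeping rather than anything conceptual: one must check that the model-category manipulations above (fibrant replacement of $\CX$, the use of $\cskc{m}$-models for the Postnikov sections, and turning the $k$-invariant square into an honest pullback of a fibration) preserve the distinguished ``tautological'' component of $\hc{m}\tX$ containing $\widetilde{X_{m}}$, and, crucially, that the additive structure on $H^{m+1}_{\Gamma}(\WGGp;\pi_{m}\CX)$ entering Definition \ref{dcoh} is the one under which a single cohomology class records the obstruction to the lift. This last point is exactly \cite[Prop.\ 3.6]{DKSmH}; the present argument essentially transports it from the $\SO$-categorical to the cubical, semi-spherical setting via Proposition \ref{padjenr} and Remark \ref{rcoh}, so in the write-up one could either reproduce the short homotopy-pullback argument above or simply invoke that reference.
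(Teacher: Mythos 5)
Your proposal is correct and follows essentially the same route as the paper, which gives no independent proof: it simply sets up $h(X_{m})$ as a class in $H^{m+1}(\Gamma;\pi_{m}\CX)$ via the $k$-invariant square of \S \ref{sfstr}(e) and invokes ``the lifting property for a fibration sequence over $BG$'' together with the citation of \cite[Prop.\ 3.6]{DKSmH}. Your write-up just makes explicit the standard homotopy-pullback/difference-class argument (and the fibrancy/cofibrancy bookkeeping) that the paper leaves to that reference, so either reproducing it or citing Dwyer--Kan--Smith, as you suggest, is exactly what the authors do.
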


\begin{mysubsect}{Relating the two obstructions}\label{srto}

In order to see how the two obstructions we have described are related,
we need some more notation:

For a pointed lattice $\Gamma$ of length \w[,]{n+1} let \w{\hWG}
denote the sub-$\CG$-category of \w{\WGGp} obtained from
\w{\skG{n-1}} by adding all unreduced null \ww{n}-cubes
(Definition \ref{dnullcube}). By Lemma \ref{llinex}, any  \ww{(n-1)}-allowable
extension \w{X'_{n-1}:\skG{n-1}\to\CX}  extends canonically
to \w[.]{\hX:\hWG\to\CX}
If \w{i_{n}:\sk{n}\hWG\to\hWG} and \w{i:\hWG\to\WGGp} are the inclusions,
we thus have a commutative diagram in \w[:]{\CGC}
$$
\xymatrix@R=25pt{
\skc{n-1}\hWG \ar[d]_{\skc{n-1}{i}=\Id} \ar[rr]^{i_{n-1}} \ar[rrd]^{X'_{n-1}} & &
\hWG \ar[d]^{\hX} \\  \skG{n-1}\ar[rr]_{X'_{n-1}} & & \CX
}
$$
\noindent Because $\Gamma$ is a lattice of length
\w[,]{n+1} \w{\WG} is $n$-dimensional. Furthermore, if we break up any chain in $\Gamma$ into
disjoint sub-chains of length $k$ and $\ell$ \wb[,]{k+\ell=n+1} the resulting composite cube has
dimension \w[.]{(k-1)+(\ell-1)=n-1} Thus the only non-degenerate $n$-cubes in
\w{\WG} are indecomposable in \w[,]{\WG(\vi,\vf)} which implies
that \w{\skG{n-1}} is in fact defined using the naive
\ww{(n-1)}-skeleton (see Remark \ref{rskel}).

Thus by adjointness (using Remark \ref{rcsk}) we have:
%
\mydiagram[\label{eqfourteen}]{
\hWG \ar[d]_{i} \ar[rr]^{\hX} & & \CX \ar[d]^{r} \\
\WGGp \ar[rr]_<<<<<<<<<<{\widetilde{X_{n}}} & & \cskc{n-1}\CX=P_{n-2}\CX
}
\noindent in which $r$ is the fibration \w{r\q{n-1}=p\q{n-1}} of
\S \ref{sfstr}(b).

Now let \w{\RG} be the \ww{\CG}-category of all \emph{reduced} null
\ww{(n-1)}-spheres (that is, boundaries of the reduced null $n$-cubes)
in \w[.]{\WG} Thus:
%
\begin{myeq}\label{eqfifteen}
\RG^{c}(u,v)=\begin{cases}
\bigcup_{\fd\in\JG}~\partial \II{n}{\fd} &~~\text{if}~~(u,v)=(\vi,\vf)\\
\emptyset &~~\text{otherwise}\end{cases}
\end{myeq}
\noindent (in the notation of \wref[)]{eqrednullcube}.
\end{mysubsect}

\begin{fact}\label{fcofib}
There is a homotopy cofibration sequence of \ww{\CG}-categories
\end{fact}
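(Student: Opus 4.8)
Fact \ref{fcofib} asserts a homotopy cofibration sequence of \ww{\CG}-categories, which (in light of the preceding setup) should read
$$
\RG~\xra{j}~\hWG~\xra{i}~\WGGp~,
$$
where $j$ is the inclusion of the reduced null $(n-1)$-spheres into the category $\hWG$ built from $\skG{n-1}$ by adjoining the unreduced null $n$-cubes, and $i$ is the further inclusion into $\WGGp=\skG{n}$.

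\begin{proof}

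The plan is to verify this one mapping complex at a time, since everything in sight is supported on the single pair \w[.]{(\vi,\vf)} By Remark \ref{rdim} the only non-degenerate $n$-cubes of \w{\WG} live in \w[,]{\WG(\vi,\vf)} and by the indecomposability observation in \S \ref{srto} these are exactly the cubes \w{\II{n}{\fd}} indexed by maximal-length decompositions \w{\fd} of \w[.]{\phi_{\max}} For every pair \w{(u,v)\neq(\vi,\vf)} all three \ww{\CG}-categories have identical mapping complexes (namely those of \w[),]{\skG{n-1}} so on those pairs the asserted cofibration sequence is \w[,]{K\xra{=}K\xra{=}K} trivially a homotopy cofibration. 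So the content is at \w[.]{(\vi,\vf)}

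First I would identify the three mapping complexes at \w[.]{(\vi,\vf)} Write \w{K=\WGGp(\vi,\vf)} for the top one. Passing to \w{\hWG} discards the \emph{reduced} null $n$-cubes but keeps their boundaries (which already lie in \w[,]{\skG{n-1}} since a facet of a reduced null cube touching the minimal vertex has a zero factor, while the opposite facets are composites coming in from lower skeleta --- this is precisely the decomposition used in the proof of Proposition \ref{pphho}). Hence \w{\hWG(\vi,\vf)} is obtained from \w{\WGGp(\vi,\vf)} by removing the interiors of the cells \w{\II{n}{\fd}} for \w[,]{\fd\in\JG} i.e.\ \w{\hWG(\vi,\vf)} is the subcomplex \w{K\setminus\{\text{open }n\text{-cells }\II{n}{\fd}\}} and the pair \w{(K,\hWG(\vi,\vf))} is a relative CW pair with cells exactly \w{\{\II{n}{\fd}\}_{\fd\in\JG}} attached along their boundaries \w[.]{\partial\II{n}{\fd}} Meanwhile \w{\RG(\vi,\vf)=\bigcup_{\fd\in\JG}\partial\II{n}{\fd}} by \wref{eqfifteen} is exactly the union of those attaching spheres, which sits inside \w{\hWG(\vi,\vf)} via \w[.]{j} So the claim reduces to: for a relative CW pair obtained by attaching a wedge of $n$-cells along a map from \w{\bigvee\partial I^{n}} into a subcomplex \w[,]{A} the sequence \w{\bigvee\partial I^{n}\to A\to A\cup(\text{cells})} is a homotopy cofibration. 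This is standard: cofibrations are closed under cobase change, the inclusions \w{\sqcap_i^{n,\varepsilon}\hra I^{n+1}} and \w{\partial I^{n}\hra I^{n}} are cofibrations in \w{\C} (Remark \ref{rmcc}), and since \w{\RG(\vi,\vf)\hra\hWG(\vi,\vf)} is a monomorphism it is a cofibration in \w[,]{\C} whose cofiber is \w[;]{\hWG(\vi,\vf)/\RG(\vi,\vf)} on the other hand \w{\WGGp(\vi,\vf)} is the pushout of \w{\ast\leftarrow\bigvee\partial I^{n}\to\hWG(\vi,\vf)} --- wait, more precisely it is obtained by coning off each \w[,]{\partial\II{n}{\fd}} so \w{\WGGp(\vi,\vf)\simeq\hWG(\vi,\vf)/\RG(\vi,\vf)} by contracting the (contractible) cones, giving the desired homotopy cofiber sequence entrywise.

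Finally I would assemble the entrywise statements into a statement about \ww{\CG}-categories. The point is that the homotopy cofibration of \ww{\CG}-categories here is \emph{not} asking for a cofibration in the model structure of Theorem \ref{tcmodcat} (which would require a latching condition), but rather a levelwise homotopy cofibration, so it suffices to check it at each \w[.]{(u,v)} The composite \w{\RG\to\hWG\to\WGGp} is the zero map on \w{(\vi,\vf)} (each reduced null sphere bounds, after coning, in \w[)]{\WGGp} and trivially zero elsewhere, and \w{\WGGp} is, levelwise, the homotopy cofiber. \textbf{The main obstacle} I anticipate is purely bookkeeping: making sure the identification \w{\RG(\vi,\vf)=\bigcup_\fd\partial\II{n}{\fd}} is correct --- in particular that distinct reduced null $n$-cubes meet only in facets adjacent to the minimal vertex, so their boundaries glue along the subcomplex of \w{\skG{n-1}} in the expected way --- which is exactly the geometry already extracted in the proof of Proposition \ref{pphho}, so it should go through without difficulty.

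\end{proof}
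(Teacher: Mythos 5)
Your argument is in essence the paper's own: reduce everything to the single mapping slot $(\vi,\vf)$ (by Remark \ref{rdim} the non-degenerate $n$-cubes of $\WG$ occur only there), and observe that $\WGGp$ is obtained from $\hWG$ exactly by attaching the reduced null $n$-cubes along their boundary spheres, which is what $j\colon\RG\to\hWG$ records; the paper's proof is precisely the two-sentence remark that $i$ is an isomorphism in every slot except $(\vi,\vf)$, where the cells attached via $j$ supply the missing (necessarily reduced) null $n$-cubes. One local slip in your write-up: away from $(\vi,\vf)$ the three mapping complexes are \emph{not} identical --- by \eqref{eqfifteen} the complex $\RG^{c}(u,v)$ is empty there --- and a sequence of the form $K\xra{=}K\xra{=}K$ is not a homotopy cofibration sequence (the cofiber of an identity is trivial); what you want, and what the paper asserts, is that in those slots the sequence is $\emptyset\to K\xra{=}K$, i.e.\ $\RG$ is trivial and $i$ is an isomorphism. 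This is immediately repairable and does not affect your main point at $(\vi,\vf)$. Note also that your identification $\WGGp(\vi,\vf)\simeq\hWG(\vi,\vf)/\RG(\vi,\vf)$ implicitly uses that the union of the attached reduced null $n$-cubes is contractible (they overlap only in facets adjacent to the minimal vertex, so the union is a cone-like star on that vertex); you gesture at this geometry, and the paper glosses it entirely, so it is not a gap relative to the paper's own level of detail.
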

%
\begin{myeq}\label{eqsixteen}
\RG~\xra{j}~\hWG ~\xra{i}~\WGGp~.
\end{myeq}

\begin{proof}
By definition of a pointed lattice, all the $n$-cubes of \w{\WG} (and thus of
\w[)]{\WGGp} are null cubes. Thus the map \w{i:\hWG\to\WGGp} is actually
an isomorphism in all mapping slots except \w[,]{(u,v)=(\vi,\vf)} where
the $n$-cells attached via $j$ provide the missing (necessarily reduced)
null $n$-cubes.
\end{proof}

\begin{defn}\label{dcorresp}
Let $\Gamma$ be a pointed lattice  of length \w[,]{n+1} \w{\CX} a
\ww{\CG}-category, and define \w{\JG} as in Proposition \ref{pphho}.
To each commuting square:
\mydiagram[\label{cohsquare}]{
\hWG \ar[d]_{i} \ar[r]^{\hat{h}} &  \CX \ar[d]^{r} \\
\WGGp \ar[r]_{h} & P_{n-2}\CX
}
\noindent in \w[,]{\CGC} we assign the composite \w{k_{n-2}\cdot h} in
\w[.]{H^{n}(\Gamma,\pi_{n-1}\CX)} Denote by \w{K_{n}(\CX)} the subset of
\w{H^{n}(\Gamma,\pi_{n-1}\CX)} consisting of all such elements
\w[.]{k_{n-2}\cdot h} Finally, define
\w{\Phi_{n}:K_{n}(\CX)\to\prod_{\fd\in\JG}~\pi_{n-1}\CX(\vi,\vf)} by
assigning to \wref{cohsquare} the homotopy class of the composite
\w[.]{\sigma:=(\hat{h}\cdot j)(\vi,\vf):\RG(\vi,\vf)\to\CX(\vi,\vf)}
\end{defn}

\begin{lemma}\label{lcorresp}
The map \w{\Phi_{n}} is well-defined.
\end{lemma}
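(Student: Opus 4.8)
The map $\sigma=(\hat h\cdot j)(\vi,\vf)\colon\RG(\vi,\vf)\to\CX(\vi,\vf)$ is read off from the square \wref{cohsquare} with no intervening choices, so the lemma asks for two things: that its homotopy class naturally lands in $\prod_{\fd\in\JG}\pi_{n-1}\CX(\vi,\vf)$, and that it depends only on the cohomology class $k_{n-2}\cdot h$ rather than on the chosen square. The first I would settle using Fact \ref{fcofib}: there the composite $i\cdot j$ is canonically null, so $r\cdot\sigma=(h\cdot i\cdot j)(\vi,\vf)$ comes with a canonical null-homotopy, exhibiting $\sigma$, up to homotopy, as a map into the homotopy fibre $F$ of the map $\CX(\vi,\vf)\to P_{n-2}\CX(\vi,\vf)$ induced by $r$ of \wref{eqfourteen}. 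By the structure of the Postnikov tower (\S \ref{sfstr}(b), transported to $\CG$-categories as in \S \ref{ssscgc}), $F$ is $(n-2)$-connected with $\pi_{n-1}F\cong\pi_{n-1}\CX(\vi,\vf)$; and $\RG^{c}(\vi,\vf)=\bigcup_{\fd\in\JG}\partial\II{n}{\fd}$ is $(n-1)$-dimensional, while the union of its faces meeting the minimal vertex $\II{0}{\phi_{\max}}$ — on which $\sigma$ is constant, since $X'$ carries the maps factoring through $0$ to null maps — is contractible, so $\RG^{c}(\vi,\vf)$ collapses onto $\bigvee_{\fd\in\JG}S^{n-1}$. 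Hence a map to an $(n-2)$-connected target is determined up to homotopy by the tuple of its restrictions to the $\partial\II{n}{\fd}$, giving the asserted element of $\prod_{\fd\in\JG}\pi_{n-1}\CX(\vi,\vf)$.

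For independence of the square, take two squares $(\hat h_{1},h_{1})$ and $(\hat h_{2},h_{2})$ as in \wref{cohsquare} with $k_{n-2}\cdot h_{1}=k_{n-2}\cdot h_{2}$ in $H^{n}(\Gamma,\pi_{n-1}\CX)$; the plan is to recognise $[\sigma_{s}]$ as a cohomological datum depending only on this class. Mapping the homotopy cofibration \wref{eqsixteen} into $E^{G}(\pi_{n-1}\CX,n)$ over $BG$ produces a long exact (Puppe) sequence linking $H^{\ast}_{\Gamma}(\RG;\pi_{n-1}\CX)$, $H^{\ast}_{\Gamma}(\hWG;\pi_{n-1}\CX)$ and $H^{\ast}_{\Gamma}(\WGGp;\pi_{n-1}\CX)$; comparing it with the fibration $F\to\CX\xra{r}P_{n-2}\CX$, one sees that, since $\hat h_{s}$ lifts $h_{s}\cdot i$ through $r$, the class $k_{n-2}\cdot h_{s}$ restricts to $0$ on $\hWG$, hence is the image under the connecting map of a class in $H^{n}_{\Gamma}(\WGGp,\hWG;\pi_{n-1}\CX)$. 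Under the identification of this relative group with $\prod_{\fd\in\JG}\pi_{n-1}\CX(\vi,\vf)$ — valid because, relative to $\hWG$, the only cells are the reduced null $n$-cells in slot $(\vi,\vf)$ and there are none above dimension $n$ — the class $[\sigma_{s}]$ is precisely the relative obstruction to extending $\hat h_{s}$ over those $n$-cells compatibly with $h_{s}$, i.e.\ a preimage of $k_{n-2}\cdot h_{s}$ under the connecting map. It then remains to see this preimage is canonical, equivalently that the connecting homomorphism $H^{n-1}_{\Gamma}(\hWG;\pi_{n-1}\CX)\to H^{n-1}_{\Gamma}(\RG;\pi_{n-1}\CX)$ of the Puppe sequence of \wref{eqsixteen} is zero. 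I would extract this from the combinatorics of the $W$-construction: the zero faces of a reduced null $n$-cube collapse to the basepoint in $\WGGp$ while its remaining faces are $\otimes$-decomposable, so the attaching data of the reduced null cells carry no $\CG$-cochain information in degree $n-1$. Granting that, $[\sigma_{1}]=[\sigma_{2}]$, as required.

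The decisive point is this last vanishing — equivalently, the existence of a \emph{canonical} lift of $k_{n-2}\cdot h$ along the connecting map to $\prod_{\fd\in\JG}\pi_{n-1}\CX(\vi,\vf)$; all the rest is naturality of Puppe and fibration sequences plus dimension counting. If the connecting homomorphism does not vanish on the nose, the fallback is to track directly the dependence of $[\sigma_{s}]$ on $\hat h_{s}$, which for squares of the form \wref{eqfourteen} is the canonical extension $\hX$ of $X'_{n-1}$ furnished by Lemma \ref{llinex}, using that $\hat h_{s}$ is then determined by $h_{s}$ and hence adds no freedom beyond that already recorded by $k_{n-2}\cdot h_{s}$.
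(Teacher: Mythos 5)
Your second paragraph follows the same route as the paper's proof: map the cofibration sequence \wref{eqsixteen} into the Eilenberg--Mac\,Lane object, observe that $k_{n-2}\cdot h$ restricts to zero on $\hWG$ (because $\hat{h}$ lifts $h\cdot i$ through $r$ and $k_{n-2}\cdot r$ is null), and read off $[\sigma]$ as a preimage of $k_{n-2}\cdot h$ under the map induced by $\WGGp\to\Sigma\RG$; the paper phrases the identification of the target via the Freudenthal suspension isomorphism $[\RG(\vi,\vf),\CX(\vi,\vf)]_{\ho\C}\cong[\Sigma\RG,E^{\hpi{\WG}}(\pi_{n-1}\CX,n)]_{\ho\CGC}$, using that $\RG$ is concentrated in the $(\vi,\vf)$ slot, and this does the work of your first paragraph. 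But you stop short exactly at the step that carries the content of the lemma: the uniqueness of that preimage. You reduce it to the vanishing of the image of $[\Sigma\hWG,E^{\hpi{\WG}}(\pi_{n-1}\CX,n)]\to[\Sigma\RG,E^{\hpi{\WG}}(\pi_{n-1}\CX,n)]$ (a restriction along $\Sigma j$, incidentally, not a connecting map) and then write ``granting that''. The combinatorial sketch you offer --- that the attaching data of the reduced null cells ``carry no cochain information in degree $n-1$'' --- is not an argument, and the fallback is circular: Definition \ref{dcorresp} allows an \emph{arbitrary} commuting square \wref{cohsquare}, so well-definedness means precisely that $[\sigma]$ is independent of the chosen pair $(\hat{h},h)$ representing a class in $K_{n}(\CX)$; restricting to squares of the form \wref{eqfourteen} and asserting that $\hat{h}$ ``adds no freedom'' beyond $k_{n-2}\cdot h$ assumes the conclusion.

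The paper closes this gap in one line: the whole ambiguity group of maps from $\Sigma\hWG$ into the relevant Eilenberg--Mac\,Lane object vanishes for dimension reasons, so exactness of the Puppe sequence makes the extension $e=\Sigma\sigma$ --- and hence its adjoint $\sigma$ --- unique up to homotopy, depending only on the class $k_{n-2}\cdot h$. This is stronger than the vanishing you were trying to establish and needs no analysis of attaching maps. A further small caveat on your first paragraph: the claim that $\sigma$ is constant on the faces meeting the minimal vertex ``since $X'$ carries maps factoring through $0$ to null maps'' presumes the square arises from an allowable extension, which Definition \ref{dcorresp} does not require; the suspension-isomorphism formulation avoids this. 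In short: right framework and the same exact-sequence bookkeeping as the paper, but the decisive vanishing statement is left unproved, so the proposal as it stands does not yet prove the lemma.
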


\begin{proof}
Freudenthal suspension gives an isomorphism
$$
[\RG(\vi,\vf),\CX(\vi,\vf)]_{\ho\C}~\xra{\cong}~
[\Sigma\RG,E^{\hpi{\WG}}(\pi_{n-1}\CX,n)]_{\ho\CGC}~,
$$
so \w{\Phi_{n}} may be equivalently defined by assigning to the composite
\w{k_{n-2}\cdot h} the extension \w{e=\Sigma\sigma} in the following diagram:
$$
\xymatrix@R=25pt{
\hWG \ar[d]_{i} \ar[rr]^{\hat{h}} & & \CX \ar[d]^{p} \\
\WG \ar[d]_{\partial} \ar[rr]^{h}\ar[rrd]^<<<<<<<<<<<<<<<<<<<<{k_{n-2}\cdot h} & &
P_{n-2}\CX \ar[d]^{k_{n-2}} \\
\Sigma\RG \ar@{.>}[rr]_<<<<<<<<<{e} & & E^{G}(\pi_{n-1}\CX,n)
}
$$
\noindent where \w{\WG\xra{\partial}\Sigma\RG\xra{\Sigma{j}}\Sigma\hWG}
is the continuation of the cofibration sequence of \wref[.]{eqsixteen}
Here we used the fact that \w{\RG} is concentrated in the \w{(\vi,\vf)}
slot, by \wref[.]{eqfifteen}

Note that the extension $e$ (and thus \w[,]{\sigma=\Phi_{n}(k_{n}\cdot h)}
the adjoint of $e$ with respect to the \w{(\Sigma,\Omega)}
adjunction) is uniquely determined up to homotopy, since
\w{[\Sigma\hWG,E^{\hpi{\WG}}(\pi_{n}\CX,n+1)]=0} for dimension reasons.
\end{proof}

Our main result, Theorem A of the Introduction, is now a consequence
of the following Theorem and Corollary:

%
%
\begin{thm}\label{tone}
Given \w{X':\Gp\to\M} as in \S \ref{sdpho}, the map \w{\Phi_{n}} is a
\emph{pointed} correspondence between the set of elements of
\w{K_{n}(\CX)} obtained from commuting squares of the form
\wref{eqfourteen} and \w{\llrr{X'}} of \wref{eqphho} \wh that is,
\w{\Phi_{n}(\alpha)=0} if and only if \w[.]{\alpha=0}
\end{thm}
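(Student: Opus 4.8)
The plan is to trace both obstructions back to the same geometric object, namely the reduced null $n$-cubes of $\WG$, and to check that the correspondence $\Phi_n$ respects the two ways of extracting a homotopy class from them. First I would set up the comparison: given an $(n-1)$-allowable extension $X'_{n-1}:\skG{n-1}\to\CX$, Lemma \ref{llinex} produces the canonical extension $\hX:\hWG\to\CX$, and by adjointness (Remark \ref{rcsk}, using that $\skG{n-1}$ uses the naive skeleton) this is exactly the data of a commuting square of the form \wref{eqfourteen}. So the commuting squares \wref{eqfourteen} are in bijection with the $(n-1)$-allowable extensions $X'_{n-1}$, hence $K_n(\CX)$ (restricted to squares of this form) is the set of Dwyer-Kan-Smith obstruction classes $[h(X_n)]$ as $X'_{n-1}$ varies.

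Next I would identify the two invariants attached to a single such square. On the topological side, Proposition \ref{pphho} says $X'_{n-1}$ is encoded by the map $F_{X'_{n-1}}:\bigvee_{\fd\in\JG}\Sigma^{n-1}X'(\vi)\to X'(\vf)$, obtained by restricting $\hX$ to the reduced null $n$-cubes and collapsing the faces that touch the zero vertex. On the cohomological side, $\Phi_n$ of the square is (by Lemma \ref{lcorresp}) the homotopy class of $\sigma=(\hat h\cdot j)(\vi,\vf):\RG(\vi,\vf)\to\CX(\vi,\vf)$, i.e.\ the restriction of $\hX$ to $\RG^c(\vi,\vf)=\bigcup_{\fd\in\JG}\partial\II{n}{\fd}$. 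The content of the theorem is that these two maps agree under the identification $\RG(\vi,\vf)\simeq\bigvee_{\fd\in\JG}\Sigma^{n-1}X'(\vi)$ coming from \wref{eqfifteen} and \wref{eqrednullcube}: collapsing the faces of $\partial\II{n}{\fd}$ adjacent to the $\phi_{\max}$-vertex (on which $X'_{n-1}$ is null, as shown in the proof of Proposition \ref{pphho}) turns $\partial\II{n}{\fd}$ into $X'(\vi)\wedge S^{n-1}=\Sigma^{n-1}X'(\vi)$, and under this collapse $\sigma$ becomes precisely $F_{X'_{n-1}}$. Since the distinct null cubes meet only along faces adjacent to the minimal vertex, where everything vanishes, the wedge decomposition is compatible with $\sigma$, so $[\sigma]=[F_{X'_{n-1}}]$ in $\prod_{\fd\in\JG}\pi_{n-1}\CX(\vi,\vf)=[\bigvee_{\fd\in\JG}\Sigma^{n-1}X'(\vi),X'(\vf)]$.

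Given this identification, the ``pointed'' clause is immediate but still needs to be recorded: $\Phi_n(\alpha)=0$ means $[\sigma]=0$, i.e.\ $[F_{X'_{n-1}}]=0$, which by Proposition \ref{pphho} happens exactly when $X'_{n-1}$ extends to $\skG{n}$; and by Proposition \ref{pdksm} (together with Lemma \ref{cfinite}, which says the tower is constant from $\hc{n-1}\tX$ up, so the relevant obstruction is $[h(X_n)]\in H^n(\Gamma,\pi_{n-1}\CX)$) the class $\alpha=k_{n-2}\cdot h$ vanishes exactly when $\widetilde{X_n}$ lifts to $\widetilde{X_{n+1}}$, which is the same as extending $X'_{n-1}$ to $\skG{n}=\WG$. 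Hence $\alpha=0\iff\Phi_n(\alpha)=0$. Finally, naturality and the fact that $\Phi_n$ is a \emph{correspondence} (not necessarily a bijection) — $K_n(\CX)$ surjects onto $\llrr{X'}$ because every $(n-1)$-allowable $X'_{n-1}$ gives both an element of $K_n(\CX)$ and, via $F_{X'_{n-1}}$, an element of $\llrr{X'}$, and these match — follows from the bijection in the first step. The main obstacle I anticipate is the bookkeeping in the second paragraph: carefully matching the combinatorics of the faces of $\II{n}{\fd}$ (which faces are killed by allowability versus which survive to form the suspension coordinate) with the Freudenthal-suspension reformulation of $\Phi_n$ in Lemma \ref{lcorresp}, and making sure the wedge identification \wref{eqrednullcube} is the \emph{same} one used implicitly in \wref{eqfifteen}; once the single-cube picture is pinned down, the gluing and the pointedness are formal.
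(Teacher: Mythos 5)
Your proposal is correct, but it follows a genuinely different route from the paper's own proof. The paper never invokes Proposition \ref{pphho} in proving Theorem \ref{tone}: after identifying $\alpha$ with the Dwyer--Kan--Smith obstruction $h(X_{n-1})=k_{n-2}\cdot\widehat{X_{n-1}}$, it maps the cofibration sequence $\RG\xra{j}\hWG\xra{i}\WGGp\xra{\partial}\Sigma\RG$ of \eqref{eqsixteen} into the $k$-invariant fibration sequence $E^{G}(\pi_{n-1}\CX,n-1)\xra{\ell}\CX\xra{p}P_{n-2}\CX\xra{k_{n-2}}E^{G}(\pi_{n-1}\CX,n)$ and chases the resulting ladder: $\sigma=0$ forces $e=0$ (via the suspension identification of Lemma \ref{lcorresp}), hence $\alpha=e\cdot\partial=0$; conversely $\alpha=0$ produces $X_{n}$ by Proposition \ref{pdksm}, so $\ell\cdot\sigma=\hX\cdot j=X_{n}\cdot i\cdot j=0$ and $\sigma=0$ because $\pi_{n-1}\ell$ is an isomorphism. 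You instead route both vanishing conditions through the common geometric statement that the given $X'_{n-1}$ extends over $\skG{n}=\WGGp$, using Proposition \ref{pphho} on the operation side and Proposition \ref{pdksm} together with the skeleton/coskeleton adjointness (Lemma \ref{cfinite}, Remark \ref{rcsk}) on the cohomological side. That is legitimate, and it buys something the paper leaves implicit: an explicit identification $\Phi_{n}(\alpha)=[F_{X'_{n-1}}]$, i.e.\ that the cohomological invariant of a square \eqref{eqfourteen} really is the corresponding element of $\llrr{X'}$; the paper's ladder argument is more formal and never needs this. Two small imprecisions, neither fatal: your phrase ``$\RG(\vi,\vf)\simeq\bigvee_{\fd\in\JG}\Sigma^{n-1}X'(\vi)$'' is not literally meant --- $\RG(\vi,\vf)$ is a union of cubical $(n-1)$-spheres, and the identification is an adjunction statement (a map $\partial\II{n}{\fd}\to\CX(\vi,\vf)$ sending the faces at the $\phi_{\max}$-vertex to the zero map corresponds to a map $\Sigma^{n-1}X'(\vi)\to X'(\vf)$, exactly as in the proof of Proposition \ref{pphho}); and your Postnikov indexing should say that the relevant obstruction is $[h(X_{n-1})]$, obstructing the lift of the map into $P_{n-2}\CX$ through $P_{n-1}\CX$, rather than a lift of $\widetilde{X_{n}}$ to $\widetilde{X_{n+1}}$.
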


\begin{proof}
By Proposition \ref{pdksm}, the composite
\w{h(X_{n-1}):=k_{n-2}\cdot\widehat{X_{n-1}}} is the obstruction to
extending \w{\hX} to \w[,]{X_{n}:\WGGp\to\CX} and since
$$
\xymatrix@R=25pt{
\RG \ar[d]_{j} \ar[rr]^{\sigma} & & E^{G}(\pi_{n-1}\CX,n-1)\ar[d]^{\ell}\\
\hWG \ar[d]_{i} \ar[rr]^{\hX} & & \CX \ar[d]^{p} \\
\WG \ar[d]_{\partial}
\ar[rr]_{\widehat{X_{n-1}}}\ar@{.>}[rru]_<<<<<<<<<<<<<<<<<<<<{X_{n}} & &
P_{n-2}\CX \ar[d]^{k_{n-2}} \\
\Sigma\RG \ar[rr]_<<<<<<<<<<{e} & & E^{G}(\pi_{n-1}\CX,n)
}
$$
\noindent commutes, with the left vertical column a cofibration and
the right vertical column a fibration sequence, the fact that
\w{e=0~\Leftrightarrow \sigma=0} implies that the composite
\w[.]{0=e\cdot\partial=k_{n-2}\cdot\widehat{X_{n-1}}=h(X_{n-1})}
Conversely, if \w{X_{n}} exists, then \w[,]{\hX\cdot j=X_{n}\cdot i\cdot j=0}
so \w[,]{\ell\cdot\sigma=0} and since \w{\pi_{n-1}\ell} is an
isomorphism, \w[.]{\sigma=0}
\end{proof}

%
%
\begin{cor}\label{ccorresp}
The Dwyer-Kan-Smith obstruction class \w{[h(X_{n-1})]} of  Proposition
\ref{pdksm} is zero in \w{H^{n}_{\Gamma}(\WG;\pi_{n-1}\CX)} if and
only if the corresponding homotopy class \w{F_{X'_{n-1}}} is
null. Therefore, \w{\llrr{X'}} vanishes if and only if \w{K_{n}(\CX)}
contains $0$.
\end{cor}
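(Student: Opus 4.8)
The plan is to deduce Corollary \ref{ccorresp} directly from Theorem \ref{tone}, since all the hard work has already been done there. The key observation is that Theorem \ref{tone} establishes that $\Phi_n$ is a \emph{pointed} correspondence: $\Phi_n(\alpha) = 0$ if and only if $\alpha = 0$, where $\alpha$ ranges over elements of $K_n(\CX)$ arising from commuting squares of the form \eqref{eqfourteen}. So the first step is simply to identify the specific element $[h(X_{n-1})] \in H^n_{\Gamma}(\WG; \pi_{n-1}\CX)$ of Proposition \ref{pdksm} as one such $\alpha$: indeed, by the discussion in \S\ref{sobstthy} and \S\ref{srto}, $[h(X_{n-1})] = [k_{n-2}\cdot \widehat{X_{n-1}}]$ is exactly the class assigned by Definition \ref{dcorresp} to the square \eqref{eqfourteen} determined by the chosen $(n-1)$-allowable extension $X'_{n-1}$ (equivalently $\hX$).

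Next I would trace through what $\Phi_n$ does to this element. By definition, $\Phi_n(\alpha)$ is the homotopy class of $\sigma = (\hat h \cdot j)(\vi,\vf)\colon \RG(\vi,\vf) \to \CX(\vi,\vf)$, which by \eqref{eqfifteen} and the identification in Proposition \ref{pphho} of $\bigcup_{\fd\in\JG}\partial\II{n}{\fd}$ with a wedge of $(n-1)$-spheres, is precisely $F_{X'_{n-1}}\colon \bigvee_{\fd\in\JG}\Sigma^{n-1}X'(\vi) \to X'(\vf)$ of \eqref{eqrednullcube}. (Here one uses the canonical extension of $X'_{n-1}$ to $\hX$ on $\hWG$ provided by Lemma \ref{llinex}, and the Freudenthal identification recorded in the proof of Lemma \ref{lcorresp}.) So the pointed-correspondence statement of Theorem \ref{tone} reads: $[h(X_{n-1})] = 0$ in $H^n_{\Gamma}(\WG; \pi_{n-1}\CX)$ if and only if $F_{X'_{n-1}}$ is null-homotopic. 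This is the first assertion of the Corollary.

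For the second assertion, I would take the disjunction over all choices of $(n-1)$-allowable extension $X'_{n-1}$. By Proposition \ref{pphho}, as $X'_{n-1}$ ranges over all $(n-1)$-allowable extensions of $X'$, the maps $F_{X'_{n-1}}$ range exactly over the subset $\llrr{X'} \subseteq [\bigvee_{\fd\in\JG}\Sigma^{n-1}X'(\vi), X'(\vf)]_{\ho\M}$ of Definition \ref{dphho}; correspondingly, by Definition \ref{dcorresp} the classes $[h(X_{n-1})] = k_{n-2}\cdot h$ range exactly over $K_n(\CX)$. Since $\Phi_n$ carries each $[h(X_{n-1})]$ to the corresponding $F_{X'_{n-1}}$ and is a pointed correspondence, $K_n(\CX)$ contains $0$ if and only if $\llrr{X'}$ contains $0$, i.e. if and only if $\llrr{X'}$ vanishes in the sense of Definition \ref{dphho}.

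The only real subtlety — and thus the step deserving the most care — is checking that the parametrization of $K_n(\CX)$ by $(n-1)$-allowable extensions is compatible on both sides: that is, that the element of $K_n(\CX)$ Theorem \ref{tone} attaches to a given square \eqref{eqfourteen} is the Dwyer-Kan-Smith obstruction $[h(X_{n-1})]$ for \emph{that same} extension, and that $\Phi_n$ of it is $F_{X'_{n-1}}$ for the same extension. This is essentially bookkeeping, relying on the canonical passage $X'_{n-1} \leftrightarrow \hX$ via Lemma \ref{llinex} together with the naturality of the constructions, but it is where one must be attentive; everything else is a formal consequence of Theorem \ref{tone}.
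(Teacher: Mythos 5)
Your proposal is correct and follows essentially the paper's own route: the paper gives no separate argument for Corollary \ref{ccorresp}, treating it exactly as you do — as the immediate consequence of Theorem \ref{tone}, obtained by identifying \w{[h(X_{n-1})]=[k_{n-2}\cdot\widehat{X_{n-1}}]} with the element of \w{K_{n}(\CX)} attached to the square \wref{eqfourteen} for the chosen \ww{(n-1)}-allowable extension, noting that \w{\Phi_{n}} sends it to (the class of) \w{F_{X'_{n-1}}} via \wref{eqfifteen} and Proposition \ref{pphho}, and then letting the extension vary. Your spelled-out bookkeeping of the parametrization by \ww{(n-1)}-allowable extensions is exactly the implicit content of the paper's deduction, so there is nothing to add.
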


\begin{remark}\label{rcorresp}
Evidently, both the classes \w{[h(X_{n-1})]} and the set \w{\llrr{X'}} serve
as obstructions to rectifying \w[,]{\tX:\Gamma\to\ho\M} given the unpointed
rectification \w[.]{X':\Gp\to\M} It is therefore clear that they must
``vanish'' simultaneously. The point of our analysis is to give an
explicit correspondence between the individual elements of
\w{\llrr{X'}} and cohomology classes in
\w[.]{H^{n}_{\Gamma}(\WG;\pi_{n-1}\CX)} By thus describing higher
homotopy operations in cohomological terms, we may hope to use
algebraic methods to study and calculate them.
\end{remark}

\end{document}